\newcommand{\sbullet}{\mathbin{\scalebox{.5}{$\bullet$}}}
\let\origcdot\cdot
\renewcommand*{\cdot}{%
  \mathbin{\vcenter{\hbox{\scalebox{1.3}{$\origcdot$}}}}}
\definecolor{darkred}{rgb}{0.5,0,0}
\definecolor{darkgreen}{rgb}{0,0.5,0}
\definecolor{darkblue}{rgb}{0,0,0.5}
\numberwithin{equation}{section}
\newtheorem{proposition}{Proposition}[section]
\newtheorem{lemma}[proposition]{Lemma}
\newtheorem{claim}[proposition]{Claim}
\theoremstyle{definition}
\newtheorem{theorem}{Theorem}
\newtheorem*{theorem*}{Theorem}
\newtheorem{definition}[proposition]{Definition}
\newtheorem{remark}[proposition]{Remark}
\setlist{nosep}
\setlist{noitemsep}
\setlist{leftmargin=*}
\def\XXint#1#2#3{{\setbox0=\hbox{$#1{#2#3}{\int}$}
     \vcenter{\hbox{$#2#3$}}\kern-.5\wd0}}
\newcommand{\R}{\mathbb{R}}
\newcommand{\Z}{\mathbb{Z}}
\renewcommand{\epsilon}{\varepsilon}
\newcommand{\hal}{\frac{1}{2}}
\newcommand{\dd}{\mathtt{d}}
\newcommand{\CC}{\mathrm{C}}
\newcommand{\E}{\mathbb{E}}
\newcommand{\dist}{\mathrm{dist}}
\newcommand{\La}{\mathsf{L}}
\newcommand{\B}{\mathrm{B}}
\newcommand{\BR}{\B_r}
\renewcommand{\P}{\mathbb{P}}
\renewcommand{\O}{\mathcal{O}}
\newcommand{\Cc}{\mathtt{C}}
\newcommand{\Ccab}{\CC^{ab}}
\newcommand{\Pa}{\Phi_{\alpha}}
\newcommand{\Ea}{\mathrm{E}_{\alpha}}
\newcommand{\AD}{\mathsf{A}_2}
\newcommand{\Ef}{\mathrm{E}_{f}}
\newcommand{\EE}{\mathrm{E}}
\newcommand{\Px}{\mathrm{Q}_{x}}
\newcommand{\hPa}{\widehat{\Ga}}
\newcommand{\X}{\mathsf{X}}
\newcommand{\bX}{\X}
\newcommand{\HH}{\mathcal{H}}
\newcommand{\Ga}{\mathsf{G}_\alpha}
\newcommand{\Psia}{\Psi_\alpha}
\newcommand{\Psiav}{\Psi_{\alpha, v}}
\newcommand{\PsiavU}{\Psi_{\alpha, v_1}}
\newcommand{\PsiavD}{\Psi_{\alpha, v_2}}
\newcommand{\ADD}{\widehat{\mathsf{A}}_2}
\newcommand{\xstar}{r_{\star}}
\newcommand{\PP}{\mathbf{p}}
\newcommand{\pP}{\mathbf{p}}
\newcommand{\per}{\mathrm{per}}
\newcommand{\Psisas}{\Psiav^{\star}}
\newcommand{\Ss}{S^\star}
\newcommand{\bPP}{\PP^{\sbullet}}
\newcommand{\RR}{\mathrm{R}}
\newcommand{\RRab}{\RR^{ab}}
\newcommand{\bPPU}{\PP^{\sbullet (1)}}
\newcommand{\bPPD}{\PP^{\sbullet (2)}}
\newcommand{\Tr}{\mathtt{tr}}
\newcommand{\Cs}{\widehat{\mathsf{R}}}
\newcommand{\Csab}{\widehat{\mathsf{R}}^{ab}}
\newcommand{\alphaL}{\alpha_{\dagger}}
\newcommand{\cc}{\mathtt{c}}
\newcommand{\alphac}{\bar{\alpha}}
\newcommand{\lambdac}{\bar{\lambda}}
\newcommand{\Fm}{\mathrm{F}}
\newcommand{\Gm}{\mathrm{G}}
\newcommand{\Hm}{\mathrm{H}}
\renewcommand{\Im}{\mathrm{I}}
\newcommand{\MM}{\mathrm{M}}
\newcommand{\MMM}{\mathrm{N}}
\newcommand{\TT}{\mathrm{T}}
\newcommand{\alphamax}{\alpha_{m}}
\newcommand{\FF}{\mathcal{F}}
\newcommand{\muf}{\mathrm{W}_f}
\newcommand{\Ws}{\mathrm{W}_s}
\newcommand{\fs}{f_s}
\newcommand{\Low}{\mathrm{Low}}
\newcommand{\MainLow}{\mathrm{MainLow}}
\newcommand{\ErrorLow}{\mathrm{ErrorLow}}
\newcommand{\High}{\mathrm{High}}
\newcommand{\FS}{\mathsf{FS}}
\newcommand{\SM}{\mathsf{SM}}
\newcommand{\LAN}{\Lambda_N}
\newcommand{\PPer}{\PP_{\per}}
\newcommand{\SUM}{\mathsf{Sum}}
\newcommand{\FDLT}{\mathsf{F}_{\mathrm{3}}} 
\newcommand{\K}{\mathrm{K}}
\newcommand{\STP}{\mathsf{S}_2^{+}}
\newcommand{\tCs}{\tau \Cs}
\newcommand{\Cst}{\Cs'}
\newcommand{\hPP}{\widehat{\PP}} 
\newcommand{\Mm}{\mathrm{M}}
\newcommand{\mustar}{\mu_{\star}}
\newcommand{\MMMt}{\widetilde{\MMM}}
\newcommand{\trho}{\tilde{\rho}} 
\newcommand{\bkappa}{\bar{\kappa}}
\newcommand{\muh}{\hat{\mu}}
\newcommand{\xss}{r_{\star \star}}
\begin{document}
\title{\LARGE{The hexagonal lattice is universally locally optimal}}
\date{\small{\today}}
\author{\large{Thomas Leblé\thanks{ Université de Paris-Cité, CNRS, MAP5 UMR 8145, F-75006 Paris, France \texttt{thomas.leble@math.cnrs.fr} }}}
\maketitle

\vspace{-0.5cm}
\begin{abstract}
We prove that the hexagonal lattice is a local minimizer, among all point configurations, of the interaction energy per unit volume for pair potentials that are completely monotonic functions of the square distance. This includes Gaussian interactions and power laws.
\end{abstract}
\begin{center}
\textit{Dedicated to Sylvia Serfaty on the occasion of her $50^{th}$ birthday.}
\end{center}

\section{Introduction}
When $f : (0, + \infty) \to \R$ is a function, and $\bX \subset \R^2$ is a locally finite collection of points, define the $f$-energy
\begin{equation}
\label{def:Ef}
\Ef(\bX) := \liminf_{r \to \infty} \frac{1}{|\bX \cap \BR|} \sum_{x, y \in \bX \cap \BR, x \neq y} f(|x-y|),
\end{equation}
where $\BR$ is the disk of center $0$ and radius $r$, while $|\bX \cap \BR|$ is the number of points in $\BR$.

\vspace{0.2cm}

The goal of this paper is to prove that if $f$ is of the form $f : r \mapsto e^{-\pi \alpha r^2}$, or $f : r \mapsto r^{-s}$, then the hexagonal lattice $\AD$ is a \emph{local} minimizer of $\Ef$ among point configurations. Our result extends to all completely monotonic functions of the square distance, see Section \ref{sec:precise_statements_of_our_results}.

An important conjecture states that $\AD$ is in fact the \emph{global} minimizer of those energies at fixed density. However, we are not aware of any local minimality result.

\subsection{The hexagonal lattice}
We denote by $\AD$ the hexagonal lattice $\AD := \sigma \Z  + \tau \Z$, seen as a collection of points in $\R^2$, where:
\begin{equation*}
\sigma := \xstar \times (1,0), \quad \tau := \xstar \times \left(\frac{1}{2}, \frac{\sqrt{3}}{2}\right), \quad \xstar := \sqrt{\frac{2}{\sqrt{3}}}. 
\end{equation*} 
The quantity $\xstar$ is the minimal distance between lattice points. A \emph{fundamental domain} $\HH$ of $\AD$ is given by the Voronoi cell of the origin, i.e. the set of points which are closer to the origin than to any other lattice point, see Figure \ref{figH}. With our choice of $\xstar$, the area of $\HH$ (the “covolume”) is equal to $1$.
\begin{figure}[h!]
\begin{center}
\begin{tikzpicture}[scale=0.6]   
 \pgfmathsetmacro{\xstar}{sqrt(2/sqrt(3))}
  \pgfmathsetmacro{\half}{0.5*\xstar}
  \pgfmathsetmacro{\hhalf}{0.5*\xstar/sqrt(3)}  
  \pgfmathsetmacro{\vfull}{\xstar/sqrt(3)}      
\pgfmathsetmacro{\ytau}{0.5*sqrt(3)*\xstar}

  \draw[fill=gray!25,thick]
    (\half,\hhalf) --
    (0,\vfull) --
    (-\half,\hhalf) --
    (-\half,-\hhalf) --
    (0,-\vfull) --
    (\half,-\hhalf) -- cycle;

  \node at (-0.2,0.18) {$\HH$};

  \draw[-, thick] (0,0) -- (\xstar,0)          node[below] {$\sigma$};
  \draw[-, thick] (0,0) -- (\half,\ytau)       node[right]      {$\tau$};

  \foreach \m in {-2,...,2}{
    \foreach \n in {-2,...,2}{
      \pgfmathsetmacro{\px}{\n*\xstar + \m*0.5*\xstar}
      \pgfmathsetmacro{\py}{\m*0.866025403784*\xstar} 
      \fill (\px,\py) circle(0.04);
    }
  }

  \fill (0,0) circle (0.04);
\end{tikzpicture}
\end{center}
\caption{The hexagonal lattice $\AD$, its basis $(\sigma, \tau)$, and the fundamental domain $\HH$.}
\label{figH}
\end{figure}
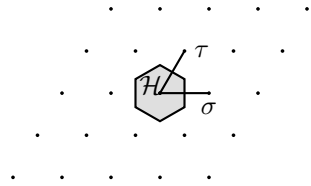

In this paper, “local” is understood with respect to small, bounded perturbations of the lattice: take $\pP : \AD \to \R^2$, and consider the \emph{perturbed lattice} obtained by shifting each lattice point $x$ by $\PP(x)$:
\begin{equation*}
\AD + \pP := \{x + \PP(x), \ x \in \AD\}.
\end{equation*}
We define the size of a perturbation $\PP$ as 
\begin{equation*}
\|\PP\|:= \sup_{x \in \AD} |\PP(x)|.
\end{equation*} 
Whenever $\|\pP\|$ is finite, the resulting point configuration $\AD + \pP$ has density $1$, in the sense that 
\begin{equation}
\label{eq:density}
\lim_{r \to \infty} \frac{1}{|\BR|} \left|(\AD + \pP) \cap \BR\right| = 1.
\end{equation}
In order to ensure that $\AD + \pP$ remains simple, and to shorten some computations, we assume that:
\begin{equation}
\label{eq:pp100}
\|\pP\| \leq \frac{1}{20} \xstar.
\end{equation}
Since we are aiming at \emph{local} results, this is not restrictive. We often write $\PP_x$ instead of $\PP(x)$.

\subsection{Main results}
\label{sec:precise_statements_of_our_results}
For $\alpha > 0$, denote by $\Pa$ the Gaussian interaction potential $\Pa : r \mapsto e^{- \pi \alpha r^2}$. Our first result is:
\begin{theorem}
\label{theo:Local1}
For all $\alpha > 0$, there exists $\epsilon$ (depending on $\alpha$) such that: 
\begin{equation}
\label{eq:Theorem1}
\text{If $\|\pP\| \leq \epsilon$, then $\EE_{\Pa}(\AD + \pP) \geq \EE_{\Pa}(\AD)$.}
\end{equation}
The value of $\epsilon$ can be chosen uniformly for $\alpha$ in compact subsets of $(0, + \infty)$.
\end{theorem}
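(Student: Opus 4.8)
The plan is to prove the Gaussian statement directly (Theorem~\ref{theo:Local1} concerns $\Pa$ itself) via a second-order expansion of $\EE_{\Pa}$ around $\AD$. First I would reduce to \emph{periodic} perturbations: since $\Pa$ decays superexponentially, truncating the interaction to range $O(\sqrt{\alpha}\,\log(1/\delta))$ changes $\EE_{\Pa}$ by at most $\delta$ uniformly in the configuration, and along a sequence of radii realising the $\liminf$ in \eqref{def:Ef} one can periodise the restriction of $\AD+\pP$ to a large ball (setting $\pP\equiv 0$ in a thin boundary shell, which keeps the configuration simple and of density $1$), paying only seam contributions that are $o(1)$ per point. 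It thus suffices to prove $\EE_{\Pa}(\AD+\pP)\ge\EE_{\Pa}(\AD)$ for $N\AD$-periodic $\pP$, for which the $\liminf$ is a genuine limit and Poisson summation gives
\begin{equation*}
\EE_{\Pa}(\AD+\pP)+1 \;=\; \frac{1}{\alpha}\sum_{k\in\frac1N\AD^{*}}\bigl|\widehat{\mu}(k)\bigr|^{2}\,e^{-\pi|k|^{2}/\alpha},\qquad
\widehat{\mu}(k):=\frac{1}{N^{2}}\sum_{x\in\AD/N\AD}e^{2\pi i\,k\cdot(x+\pP_x)},
\end{equation*}
with $\AD^{*}$ the dual lattice and $\widehat{\mu}(0)=1$. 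Subtracting the $\pP\equiv 0$ identity (where $\widehat{\mu}=\ind_{\AD^{*}}$) separates a negative ``loss'' term over $\AD^{*}\setminus\{0\}$ from a nonnegative ``gain'' term over $\tfrac1N\AD^{*}\setminus\AD^{*}$.

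Next, $\AD$ is a critical point: by inversion symmetry the first-order term $2\sum_{x}\pP_x\cdot\sum_{z\in\AD\setminus\{0\}}\nabla\Pa(z)$ vanishes. Expanding $\widehat{\mu}$ to second order and writing $\pP_x=\sum_{q}\widehat{\pP}(q)e^{2\pi i\,q\cdot x}$, the second-order part of $\EE_{\Pa}(\AD+\pP)-\EE_{\Pa}(\AD)$ is, up to the positive factor $4\pi^{2}/\alpha$,
\begin{equation*}
\sum_{q\neq 0}\widehat{\pP}(q)^{*}\,\bigl(G(q)-G(0)\bigr)\,\widehat{\pP}(q),\qquad
G(\xi):=\sum_{v\in\AD^{*}+\xi}v\,v^{T}\,e^{-\pi|v|^{2}/\alpha}
\end{equation*}
(the gain term supplies $+G(q)$, the loss term $-G(0)$, and $G$ is $\AD^{*}$-periodic). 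Since the hexagonal lattice is strongly eutactic, $G(0)=g_{0}\,\id$ with $g_{0}=\tfrac12\sum_{v\in\AD^{*}\setminus\{0\}}|v|^{2}e^{-\pi|v|^{2}/\alpha}$, so positivity of the Hessian is the matrix inequality $G(\xi)\succeq g_{0}\,\id$.

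The crux — and the step I expect to be the main obstacle — is exactly
\begin{equation*}
G(\xi)\;\succeq\;G(0)\qquad\text{for all }\xi\in\R^{2}\text{ and all }\alpha>0,
\end{equation*}
in the quantitative form $G(\xi)-G(0)\succeq c(\alpha)\min\!\bigl(1,\dist(\xi,\AD^{*})^{2}\bigr)\,\id$. This is the arithmetic fact distinguishing the hexagonal lattice, a local echo of the open global conjecture. For small $\alpha$ it is elementary: $G(\xi)$ is dominated by the points of $\AD^{*}+\xi$ nearest the origin, and for $\xi$ bounded away from $\AD^{*}$ the nearest one has norm at most the covering radius $\xstar/\sqrt{3}<\xstar$ of $\AD^{*}$, so its single contribution already beats $g_{0}$ with exponential room; for $\xi$ near $\AD^{*}$, $G(\xi)-G(0)$ is to leading order the rank-one form $\xi\xi^{T}e^{-\pi|\xi|^{2}/\alpha}\succeq 0$ plus an $O(|\xi|^{2})$ correction (the odd part cancels by symmetry) negligible for small $\alpha$. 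For large $\alpha$ one transforms back to this regime through the Jacobi/Poisson functional equation for $\sum_{v}|v|^{2}e^{-\pi t|v|^{2}}$, using that $\AD^{*}$ is again a hexagonal lattice of unit covolume, hence isometric to $\AD$. The intermediate range of $\alpha$ is the delicate case; there I would either estimate directly using the first few shells of $\AD^{*}$ and its covering geometry, or exploit the modularity of $\Theta_{\AD}$ (in the spirit of Montgomery's theorem) to reduce to monotonicity on the remaining parameter window. Uniformity of $\epsilon$ for $\alpha$ in a compact subset of $(0,\infty)$ then follows from continuity of $G$ in $\alpha$ and the uniform superexponential tails.

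Granting that estimate, the remaining argument is standard. The Hessian form is $\ge c(\alpha)\sum_{q\neq0}\min\!\bigl(1,\dist(q,\AD^{*})^{2}\bigr)|\widehat{\pP}(q)|^{2}$, whereas the cubic and higher Taylor remainders are bounded by $\|\pP\|$ times a sum of $\overline{|\pP_x-\pP_{x+e}|^{2}}$ over short $e$ (only bounded $|e|$ matter, by the decay of the derivatives of $\Pa$; the bar is the average over a period cell), and $\overline{|\pP_x-\pP_{x+e}|^{2}}=\sum_{q}4\sin^{2}(\pi\,q\cdot e)\,|\widehat{\pP}(q)|^{2}$ carries exactly the weight $\min(1,\dist(q,\AD^{*})^{2})$. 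Hence for $\|\pP\|\le\epsilon$ with $\epsilon$ small in terms of $c(\alpha)$ and the interaction range, the Hessian dominates the remainder and $\EE_{\Pa}(\AD+\pP)\ge\EE_{\Pa}(\AD)$, with $\epsilon$ uniform on compact $\alpha$-ranges. The completely monotone extension announced in Section~\ref{sec:precise_statements_of_our_results} follows by writing such an $f$ as a superposition of Gaussians and integrating the inequality.
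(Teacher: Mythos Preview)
Your framework is essentially the paper's: reduce to periodic perturbations, expand to second order, and recognize that the Hessian form is governed by the matrix-valued lattice sum $G(\xi)=\sum_{k\in\ADD}(k+\xi)(k+\xi)^{T}e^{-\pi|k+\xi|^{2}/\alpha}$, so that local optimality reduces to $G(\xi)\succeq G(0)$. For any unit $w$ one has $w^{T}G(\xi)w=\Psiav(\xi)$ with $v=w$, so this is exactly the paper's Proposition~\ref{prop:minimality}. Your control of the remainder via $\sum_{q}\sin^{2}(\pi q\cdot e)\,|\widehat{\pP}(q)|^{2}$ also mirrors the paper's use of the spectral measure.

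The gap is your claim that the small-$\alpha$ case of $G(\xi)\succeq G(0)$ is ``elementary''. The nearest point of $\ADD+\xi$ to the origin is $\xi$ itself (for $\xi$ in the Voronoi cell), and its contribution $\xi\xi^{T}e^{-\pi|\xi|^{2}/\alpha}$ is \emph{rank one}: it has eigenvalue $0$ in any direction $w\perp\xi$. So for such $w$ your ``leading term'' vanishes identically and the $O(|\xi|^{2})$ ``correction'' from the first shell is not negligible---it is the entire signal, and its sign is the whole question. Concretely,
\[
w^{T}\bigl(G(\xi)-G(0)\bigr)w=\sum_{s\in\ADD\setminus\{0\}}(s\cdot w)^{2}\Bigl(e^{-\pi|s+\xi|^{2}/\alpha}-e^{-\pi|s|^{2}/\alpha}\Bigr),
\]
and the first-shell vertex $s$ most opposite to $\xi$ (which maximizes the exponential gain) is nearly parallel to $\xi$ and hence has $(s\cdot w)^{2}$ small, while the vertices with large $(s\cdot w)^{2}$ are nearly orthogonal to $\xi$ and satisfy $|s+\xi|>|s|$, contributing \emph{negatively}. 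Showing the balance is positive is precisely the hard core of the paper: Section~\ref{sec:proof_minimality} treats $\alpha\le\alphac$ by a multi-case analysis in $(|\xi\cdot w|,|\xi|)$, and in the worst case (small $|\xi\cdot w|$, large $|\xi|$---your $w\perp\xi$) singles out a ``good vertex'' $s_5$ that is \emph{not} the most-opposite one but offset by $\pi/3$, so that both $(s_5\cdot w)^{2}$ and the exponential gain are simultaneously bounded below; the verification involves explicit numerical constants and does not reduce to covering-radius geometry. Your large-$\alpha$ plan via Poisson duality is fine and matches Section~C.1 (where each shell is positive by the periodic $2$-design Lemma~\ref{lem:geomwk} once $\alpha>\frac{2}{\pi\xstar^{2}}$); but this already covers everything down to $\alpha\approx 0.552$, so there is no separate ``intermediate'' window---the difficulty is entirely on the small-$\alpha$ side, which you underestimated.
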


Following \cite{cohn2007universally}, there is a strong interest in considering more general interaction potentials $f$ that are completely monotonic functions of square distance (we call them here “c.m.s.d”). Recall that a smooth function $g$ is \emph{completely monotonic} when $(-1)^k g^{(k)} \geq 0$ for all $k \geq 0$. A c.m.s.d function is then defined as being of the form $f(r) = g(r^2)$, where $g$ is completely monotonic. It follows from a theorem of Bernstein that every c.m.s.d function $f$ can be written as a mixture of Gaussians, i.e. there exists a certain positive measure $\muf$ on $[0, + \infty)$ such that for all $r > 0$, the following integral is finite and:
\begin{equation}
\label{def:muf}
\int_{0}^{+ \infty} \Pa(r) \dd \muf(\alpha) = f(r).
\end{equation}
\begin{theorem}
\label{theo:Family}
Let $f$ be a c.m.s.d function. There exists $\epsilon > 0$ (depending on $f$) such that: 
\begin{equation}
\label{eq:theofamily}
\text{If $\|\pP\| \leq \epsilon$, then $\Ef(\AD + \pP) \geq \Ef(\AD)$.}
\end{equation}
More generally, let $\FF$ be a family of c.m.s.d. functions, and assume that the following condition is satisfied:
\begin{equation}
\label{condiF}
\lim_{\alpha_0 \to 0, \alpha_1 \to \infty} \frac{ \displaystyle{\int_{\alpha_1}^{+ \infty} e^{- \frac{\pi \alpha}{2} \xstar^2} \dd \muf(\alpha) + \int_{0}^{\alpha_0} \alpha^{-1} \dd \muf(\alpha)}}{\displaystyle{\int_{1}^{+ \infty}  e^{-\pi \alpha \xstar^2} \dd \muf(\alpha) + \int_{0}^{1} e^{-\frac{\pi}{\alpha} \xstar^2} \dd \muf(\alpha)}} = 0 \text{ uniformly for $f \in \FF$.}
\end{equation}
Then $\AD$ is \emph{uniformly locally optimal} for $f \in \FF$, namely there exists $\epsilon > 0$ such that: 
\begin{equation*}
\text{If $\|\pP\| \leq \epsilon$, then $\EE_{f}(\AD + \pP) \geq \EE_{f}(\AD)$ for all $f \in \FF$.}
\end{equation*}
\end{theorem}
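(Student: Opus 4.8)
The plan is to reduce the statement to the Gaussian case via Bernstein's representation \eqref{def:muf} and then to feed in a quantitative form of Theorem~\ref{theo:Local1}. First I would set aside the degenerate case $\Ef(\AD) = +\infty$, where the inequality is vacuous (the density-dependent leading term of $\EE_{\Pa}$ as $\alpha \to 0$ forces $\Ef(\AD+\pP) = +\infty$ as well), and assume $\Ef(\AD) < +\infty$, i.e.\ $\int_0^1 \alpha^{-1}\dd\muf(\alpha) < +\infty$. Together with $\int_1^{+\infty} e^{-\pi\alpha\xstar^2}\dd\muf(\alpha) \le f(\xstar) < +\infty$ and the crude bounds $\EE_{\Pa}(\AD),\, \EE_{\Pa}(\AD+\pP) \lesssim \alpha^{-1}$ for $\alpha \le 1$ (Poisson summation for $\AD$; $\AD+\pP$ is $\tfrac{9}{10}\xstar$-separated of density $1$ by \eqref{eq:pp100}) and $\lesssim e^{-\frac{\pi}{2}\alpha\xstar^2}$ for $\alpha \ge 1$, this makes all relevant quantities integrable against $\muf$. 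Tonelli on the nonnegative truncated sums yields $S_r(\bX)[f] = \int_0^{+\infty} S_r(\bX)[\Pa]\,\dd\muf(\alpha)$, where $S_r$ denotes the normalized sum of \eqref{def:Ef}; taking $\liminf_{r\to\infty}$ and Fatou for $\bX = \AD+\pP$, and using $S_r(\AD)[\Pa] \le \EE_{\Pa}(\AD)$ for the lattice, I obtain
\begin{equation*}
\Ef(\AD+\pP) - \Ef(\AD) \ \ge\ \int_0^{+\infty}\Delta_\alpha(\pP)\,\dd\muf(\alpha), \qquad \Delta_\alpha(\pP) := \EE_{\Pa}(\AD+\pP) - \EE_{\Pa}(\AD),
\end{equation*}
so it is enough to make the right-hand side nonnegative.

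The crux — and what the proof of Theorem~\ref{theo:Local1} actually delivers — is a two-sided estimate for $\Delta_\alpha(\pP)$ in terms of one translation-invariant quadratic functional $\mathcal{Q}(\pP) \ge 0$ of the perturbation (comparable to the average of $|\pP_x - \pP_{x+\sigma}|^2 + |\pP_x - \pP_{x+\tau}|^2$): there are absolute constants $c, C > 0$ so that, for $\|\pP\| \le \tfrac{1}{20}\xstar$ and all $\alpha > 0$,
\begin{equation*}
\Delta_\alpha(\pP) \ \ge\ c\,g(\alpha)\,\mathcal{Q}(\pP) - C\,\|\pP\|\,h(\alpha)\,\mathcal{Q}(\pP), \qquad |\Delta_\alpha(\pP)| \ \le\ C\,h(\alpha)\,\mathcal{Q}(\pP),
\end{equation*}
where $g(\alpha) = \min\!\big(e^{-\pi\alpha\xstar^2},\, e^{-\pi\xstar^2/\alpha}\big)$ is the coercivity scale of the second variation of $\EE_{\Pa}$ at $\AD$ transversally to translations — the two regimes being governed, respectively, by the nearest neighbours of $\AD$ and, after Poisson summation, of the dual lattice, which at covolume $1$ also has minimal length $\xstar$ — and $h$ is an error scale with $h(\alpha) \le C\alpha^{-1}$ for $\alpha \le 1$ and $h(\alpha) \le C e^{-\frac{\pi}{2}\alpha\xstar^2}$ for $\alpha \ge 1$, the slack factor $\tfrac12$ serving to absorb polynomial-in-$\alpha$ losses from sums $\sum_{v \in \AD} e^{-\pi\alpha|v|^2}|v|^k$. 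The lower bound is the genuine content: vanishing of the first variation at $\AD$ by centro-symmetry, Taylor expansion of $e^{-\pi\alpha|\cdot|^2}$ in the relative displacements $\pP_x - \pP_y$, positive-definiteness of the quadratic part modulo translations, and \eqref{eq:pp100} to dominate the cubic-and-higher remainder by the same $\mathcal{Q}$; the error bound has the same shape because $\Delta_\alpha$ sees $\pP$ only through the differences $\pP_x - \pP_y$.

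I would then split $\int_0^{+\infty}\Delta_\alpha\,\dd\muf$ over $(0,\alpha_0]$, $[\alpha_0,\alpha_1]$ and $[\alpha_1,+\infty)$, for $0 < \alpha_0 < 1 < \alpha_1$ to be fixed. On the compact middle range $h(\alpha) \le C(\alpha_0,\alpha_1)\,g(\alpha)$, so for $\|\pP\|$ below a threshold $\epsilon(\alpha_0,\alpha_1)$ the integrand is $\ge \tfrac{c}{2} g(\alpha)\,\mathcal{Q}(\pP)$; since $g(\alpha) = e^{-\pi\xstar^2/\alpha} \le \alpha^{-1}$ on $(0,1]$ and $g(\alpha) = e^{-\pi\alpha\xstar^2} \le e^{-\frac{\pi}{2}\alpha\xstar^2}$ on $[1,+\infty)$, one has $\int_0^{+\infty} g\,\dd\muf = D_f$ and $\int_{\alpha_0}^{\alpha_1} g\,\dd\muf \ge D_f - N_f$, where $D_f$ and $N_f = N_f(\alpha_0,\alpha_1)$ are exactly the denominator and numerator in \eqref{condiF}; meanwhile the error bound gives $\big|\int_{(0,\alpha_0]}\Delta_\alpha\,\dd\muf\big| + \big|\int_{[\alpha_1,+\infty)}\Delta_\alpha\,\dd\muf\big| \le C\,\mathcal{Q}(\pP)\,N_f$. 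Altogether
\begin{equation*}
\Ef(\AD+\pP) - \Ef(\AD) \ \ge\ \mathcal{Q}(\pP)\left[\tfrac{c}{2} D_f - \big(\tfrac{c}{2} + C\big) N_f(\alpha_0,\alpha_1)\right].
\end{equation*}
By \eqref{condiF} I may choose $\alpha_0$ small and $\alpha_1$ large, \emph{uniformly in $f \in \FF$}, so that $N_f/D_f$ is below the threshold making the bracket $\ge 0$; since $\mathcal{Q}(\pP) \ge 0$, this proves the result with $\epsilon := \min\!\big(\epsilon(\alpha_0,\alpha_1),\, \tfrac{1}{20}\xstar\big)$, depending only on $\FF$. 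The first assertion \eqref{eq:theofamily} is the case $\FF = \{f\}$, for which \eqref{condiF} holds automatically: $D_f$ is a fixed positive finite number, while $N_f(\alpha_0,\alpha_1) \to 0$ as $\alpha_0 \to 0$, $\alpha_1 \to +\infty$.

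The main obstacle is the quantitative coercivity behind the displayed two-sided estimate: positive-definiteness of the second variation of $\EE_{\Pa}$ at $\AD$ transversally to translations with the \emph{sharp} $\alpha$-scale $g(\alpha)$, together with control of the higher-order remainder against the \emph{same} functional $\mathcal{Q}$, so that the only obstruction to local optimality is the size of $\|\pP\|\,h(\alpha)/g(\alpha)$ — which is precisely what makes the quotient in \eqref{condiF} the right hypothesis. This is the substance of Theorem~\ref{theo:Local1}; by comparison the integral splitting, the elementary tail bounds, and the measure-theoretic reduction are routine.
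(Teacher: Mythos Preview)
Your strategy is essentially the paper's: write $\Ef(\AD+\pP)-\Ef(\AD)$ as $\int \Delta_\alpha\,\dd\muf$, split at $\alpha_0,\alpha_1$, use quantitative coercivity on the middle range and rough lower bounds on the tails, then invoke \eqref{condiF}. Two points deserve correction.

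First, you omit the reduction to \emph{periodic} perturbations (Lemma~\ref{lem:Station}). The paper's quantitative estimates (Proposition~\ref{prop:Gaussian_Quantitative}, Lemmas~\ref{lem:largealpha} and~\ref{lem:smallalpha}) are proved only for periodic $\PP$: the functionals $\FS(\PP)$, $\SM(\PP)$, the measures $\Px$, and the spectral measure $\Cs$ are all defined through the period. Your Fatou argument for general $\pP$ is fine for passing to the integral, but the ``one translation-invariant quadratic $\mathcal{Q}(\pP)$'' you invoke is not available for arbitrary bounded $\pP$; you need the periodic reduction first.

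Second, the two-sided estimate you state is a simplification of what the paper actually proves. The paper uses \emph{two} functionals, $\FS$ for $\alpha\ge\alphaL$ and $\SM$ for $\alpha\le\alphaL$, related by $\FS\le\Cc\,\SM$ (Lemma~\ref{lem:RRORRxIntomegacarre}); this relation is what lets the two regimes be combined in \eqref{LBFamily}. Moreover, the coercivity bounds are \emph{conditional} ($\|\PP\|\le 1/\alpha$ for large $\alpha$, $\|\PP\|\le \cc\,e^{-2\pi\xstar^2/\alpha}$ for small $\alpha$), not graded as $c\,g(\alpha)-C\|\pP\|\,h(\alpha)$; in particular the small-$\alpha$ error in \eqref{eq:EaMainLow} scales like $\|\PP\|\,\alpha^{-4}$, not $\|\PP\|\,\alpha^{-1}$. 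This does not break your argument---on $[\alpha_0,\alpha_1]$ the conditional bounds apply once $\|\PP\|\le\min(1/\alpha_1,\cc\,e^{-2\pi\xstar^2/\alpha_0})$, and on the tails you use the unconditional rough bounds \eqref{quantitativealphaL2}, \eqref{quantitativealphaSMALL_2}---but the form you wrote is not what Theorem~\ref{theo:Local1} delivers.
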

\begin{remark} \label{rem:integrability}
When writing \eqref{eq:theofamily}, we implicitly assume $\Ef(\AD) < + \infty$, which, for a c.m.s.d function $f$, is equivalent to $r \mapsto r f(r)$ being integrable at infinity, or to $\alpha \mapsto \alpha^{-1}$ being $\muf$-integrable at $0$, see \eqref{TwoInteg}. Otherwise, it is easy to see that both sides of \eqref{eq:theofamily} are $+ \infty$.
\end{remark}

The power-law (or Riesz) potentials $\fs : r \to r^{-s}$ are c.m.s.d functions whose measure $\Ws$ has density:
\begin{equation}
\label{eq:Ws}
\frac{\dd \Ws}{\dd \alpha} = \frac{\pi^{\frac{s}{2}} \alpha^{\frac{s}{2} - 1}}{\Gamma(\frac{s}{2})} .
\end{equation}
As a direct application of Theorem \ref{theo:Family}, we obtain that $\AD$ is locally optimal for every $\fs$, and that $\epsilon$ can be chosen uniformly for $s$ in compact subsets of $(2, + \infty)$ (when $s \leq 2$, the interaction energies are infinite).

\subsection{Connection with the literature and the “universal optimality” conjecture}
\label{sec:connection_with_the_literature}
The lattice $\AD$ is famously optimal with respect to circle packing. It is moreover known, or expected, to solve many other optimization problems, see \cite{gruber2000many}. This is an instance of “crystallization”, namely the observation that, in various settings, optimal point configurations exhibit a regular structure, see \cite{blanc2015crystallization}. 

In that regard, a major open problem consists in proving that $\AD$ is “universally optimal” for energy minimization, i.e. is a \emph{global} minimizer, among point configurations of density $1$ in the sense of \eqref{eq:density}, of every Gaussian interaction energy, see \cite[Conjecture 9.4]{cohn2007universally} - such interactions are sometimes referred to as the \emph{Gaussian core model} in the physics literature \cite{stillinger1976phase}. By Bernstein's theorem, this would imply \emph{global} minimality for \emph{every} c.m.s.d interaction potential. Partial results are few:
\begin{itemize}
  \item In \cite{montgomery1988minimal}, Montgomery proved that $\AD$ minimizes all those energies \emph{among lattices}. This result has later been generalized in many directions, see e.g. \cite[Theorem 2]{sandier2012ginzburg} for an extension to the Coulomb energy, or \cite{betermin2023maximal} and references therein for variations on the topic of minimization among lattices.
\item Some recent results have gone slightly beyond the lattice case, for instance \cite{hardin2023universally} shows universal optimality of $\AD$ among point configurations with a period of $4$ or $6$ points.
\item  A proof of local optimality among all periodic configurations - with an implicit dependency on the period - had been given in \cite{coulangeon2012energy}, but a flaw was unfortunately later found by the authors \cite{coulangeon2022erratum}. 
\item As put in \cite{faulhuber2024note} \emph{“the status of universal optimality of the hexagonal lattice has been set back to the result of Montgomery from 1988”}.
\end{itemize}
In this paper, we prove \emph{local} minimality of $\AD$ for all c.m.s.d functions, together with a quantitative understanding, in terms of $f$, of:
\begin{enumerate}
  \item The size $\epsilon$ of the neighborhood around $\AD$ on which we guarantee local minimality of $\Ef(\AD)$.
  \item A lower bound, in terms of $\PP$, on $\Ef(\AD + \PP) - \Ef(\AD)$ when $\|\PP\|\leq \epsilon$.
\end{enumerate}
Moreover, we obtain a form of “local universal optimality”, as we show that $\epsilon$ can be chosen uniformly for reasonable families of c.m.s.d functions, see Theorem \ref{theo:Family}. To the best of our knowledge, these are the first results of this kind. However, it is fair to say that:
\begin{itemize}
   \item We remain far from the full conjecture, both qualitatively and quantitatively (our $\epsilon$'s are very small).
   \item The techniques are local by nature and cannot possibly yield a global result.
 \end{itemize} 

\begin{remark}
In dimension $8$ and $24$, both sphere packing and universal optimality were recently solved thanks to major breakthroughs by Viazovska et al. We refer to \cite[Sec.~1]{cohn2022universal} for an introduction to such problems and a survey of what is known, or expected, in other dimensions.
\end{remark}

\subsection{Plan of the paper}
Since c.m.s.d functions are mixtures of the $\Pa$'s, most of the work is devoted to proving, in a quantitative way, that $\AD$ is locally optimal with respect to Gaussian interactions.
\begin{itemize}
  \item In Section \ref{sec:preliminaries}, we introduce some important tools and lemmas. In particular: 
\begin{itemize}
  \item We show that it is enough to consider periodic perturbations (Lemma \ref{lem:Station}).
  \item We present a “periodic” version of the $2$-design property for regular hexagons (Proposition \ref{prop:Random2design}).
  \item We state a key minimality result for a certain lattice sum  (Proposition \ref{prop:minimality}).
\end{itemize}
  \item In Section \ref{sec:proof_of_theorem_ref_theo_local1}, we prove a quantitative version of Theorem \ref{theo:Local1},  then use it to deduce Theorem \ref{theo:Family}.

  \item An appendix is devoted to the proof of auxiliary results. In particular, Appendix \ref{sec:proof_minimality}, which takes up a significant portion of the paper, consists in proving the minimality result of Proposition \ref{prop:minimality}.
\end{itemize}

\noindent \textbf{Acknowledgements.} \textit{We thank Sylvia Serfaty for introducing us to such topics quite some time ago. We thank Martin Huesmann for initial discussions, and Antoine Tilloy for his help with positivity questions.}

\clearpage
\section{Preliminaries}
\label{sec:preliminaries}
We denote the inner product of two vectors $x,y$ of $\R^2$ by $x \cdot y$. 

\subsection{Convention for Fourier analysis on \texorpdfstring{$\R^2$}{the plane}}
\label{sec:conventionFourier}
\paragraph{Fourier transforms.}
We define the Fourier transform $\hat{\varphi}$ of a Schwartz function $\varphi$ as:
\begin{equation*}
\hat{\varphi} : k \mapsto \hat{\varphi}(k) := \int_{\R^2} \varphi(x) e^{-2i\pi x \cdot k} \dd x.
\end{equation*}
With this convention, the Gaussian\footnote{$\Ga$ is a function $\R^2 \to \R$ whereas in Section \ref{sec:precise_statements_of_our_results}, the function $\Pa$ is $[0, + \infty) \to \R$. We have $\Ga(x) = \Pa(|x|)$.} family $\Ga : x \mapsto e^{-\pi \alpha |x|^2}$ on $\R^2$ satisfies 
\begin{equation}
\label{eq:hPa}
\widehat{\Ga} = \alpha^{-1} \mathsf{G}_{\alpha^{-1}} \text{ for $\alpha > 0$.}
\end{equation}

\paragraph{Reciprocal lattice and Pontryagin dual.}
We denote by $\ADD$ the \emph{reciprocal} lattice of $\AD$, defined as:
\begin{equation*}
\ADD := \left\lbrace k \in \R^2, \ k \cdot x \in \Z \text{ for all } x \in \AD \right\rbrace.
\end{equation*}
In practice $\ADD$ corresponds to applying a rotation of angle $\frac{\pi}{2}$ to $\AD$. We denote by $\Omega$ the fundamental domain of $\ADD$ obtained by applying the same rotation to $\HH$. 

\vspace{0.2cm}

The \emph{Pontryagin dual} of $\AD$ is the torus $\R^2 / \ADD$, which can be identified with $\Omega$.

\paragraph{Poisson summation formula.}
If $\varphi : \R^2 \to \R$ is a Schwartz function and $u \in \R^2$, we have:
\begin{equation*}
\sum_{x \in \AD} \varphi(x + u) = \sum_{k \in \ADD} \hat{\varphi}(k) e^{2i\pi k \cdot u}.
\end{equation*}

\subsection{Periodic functions and Fourier transforms.}
We will use sub-lattices of the form $N \AD := N \sigma \Z  + N \tau \Z$ for some integer $N \geq 1$, where $\sigma, \tau$ are the basis vectors of $\AD$. The reciprocal lattice of $N \AD$ is given by 
\begin{equation*}
\widehat{N \AD} = \frac{1}{N} \ADD.
\end{equation*}
We say that a function $g$ defined on $\AD$ is $N\AD$-periodic if 
\begin{equation}
\label{def:periodic}
g(x + y) = g(x) \text{ for all $x \in \AD$ and all $y \in N \AD$}.
\end{equation}
Equivalently\footnote{We will abuse notation and not distinguish between $g$ and the corresponding map on the quotient space.}, $g$ can be seen as function on the discrete torus $\AD / N \AD$. When $g$ is $N\AD$-periodic, we define its Fourier transform as the following function, defined for “frequencies” $k$ in $\frac{1}{N} \ADD / \AD^\star$:
\begin{equation}
\label{Fourier_Lattice}
\hat{g} := k \mapsto \frac{1}{N^2} \sum_{x' \in \AD / N \AD} g(x') e^{-2i\pi k \cdot x'}.
\end{equation}
Then the Plancherel identity holds:
\begin{equation}
\label{eq:Plancherel}
\frac{1}{N^2} \sum_{x' \in \AD / N \AD} |g(x')|^2 =  \sum_{k \in \frac{1}{N} \ADD / \ADD} |\hat{g}(k)|^2.
\end{equation}
We refer e.g. to the lecture notes \cite{Tao} for a pedagogical presentation of this topic.

\subsection{Reduction to periodic perturbations}
\label{sec:reduction_to_stationary_perturbations}
\begin{lemma}
\label{lem:Station}
Let $f$ be a c.m.s.d function. Let $\pP$ be a perturbation such that $\|\PP\| < + \infty$, and let $\delta > 0$. There exists $N \geq 1$ and a perturbation $\PP_{\per}$ which is $N \AD$-periodic in the sense of \eqref{def:periodic}, such that:
\begin{enumerate}
   \item The size of perturbations does not increase: $\|\PP_{\per}\| \leq \|\PP\|$.
   \item The corresponding $f$-energy increases by at most $\delta$: 
   \begin{equation*}
\Ef(\AD + \PP_{\per}) \leq \Ef(\AD + \PP) + \delta.
   \end{equation*}
\end{enumerate}
\end{lemma}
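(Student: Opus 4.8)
## Proof plan for Lemma \ref{lem:Station}

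The plan is to produce the periodic perturbation by a periodization-and-averaging argument together with a cutoff of the energy series. The key point is that $\Ef$ is defined by a $\liminf$ of averages over large disks, so a carefully chosen large scale $N$ allows us to localize the computation.

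First I would rewrite the $f$-energy using the Gaussian mixture \eqref{def:muf}: since $f = \int_0^\infty \Pa\,\dd\muf(\alpha)$, and $\Pa$ is positive and rapidly decaying, Tonelli's theorem gives
\begin{equation*}
\Ef(\bX) = \int_0^{+\infty} \EE_{\Pa}(\bX)\,\dd\muf(\alpha)
\end{equation*}
whenever the left-hand side is finite (this uses that $\EE_{\Pa}\ge 0$ after the diagonal is removed, so the $\liminf$ and the integral interact well; one has to be a little careful that the $\liminf$ of an integral is bounded above by the integral of the $\liminf$, which is the direction we need). Because each $\Pa$ decays like a Gaussian, for every $\alpha$ the sum defining $\EE_{\Pa}(\AD+\PP)$ over a disk $\BR$ is, up to an error that is exponentially small in the truncation radius, a sum of \emph{local} contributions: the interaction of a point $x+\PP_x$ with the points $y+\PP_y$ for $|x-y|\le L$ for some moderate $L$. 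Fix $\delta>0$; choose a truncation length $L$ (depending on $f$ and $\delta$, via the tail of $\muf$ and on $\|\PP\|\le\tfrac1{20}\xstar$) so that neglecting pairs at lattice-distance $>L$ changes every relevant energy by less than $\delta/4$.

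Next I would pick a scale $r$ along which the $\liminf$ in \eqref{def:Ef} is nearly attained for $\AD+\PP$, and then choose $N$ large (an integer multiple of the relevant lattice spacing, with $N\gg L$) and a sub-disk $\B\subset\BR$ on which the empirical "energy density"
\begin{equation*}
\frac{1}{|\AD\cap\B|}\sum_{\substack{x\in\AD\cap\B}}\ \sum_{\substack{y\in\AD,\ 0<|x-y|\le L}} f\big(|x+\PP_x-(y+\PP_y)|\big)
\end{equation*}
is at most $\Ef(\AD+\PP)+\delta/2$; such a sub-disk (in fact a sub-square of side $N$) exists by an averaging/pigeonhole argument since the average over all $N\times N$ sub-cells tiling $\BR$ is essentially the disk average. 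Now define $\PP_{\per}$ to be the $N\AD$-periodic perturbation that agrees with $\PP$ on the chosen fundamental cell of $N\AD$ (translated to contain that sub-square) and is extended by periodicity; by construction $\|\PP_{\per}\|\le\|\PP\|$, which is conclusion (1). For conclusion (2): in the periodic configuration every point sees, within distance $L$, exactly the same local picture as some point in the interior of the chosen cell of the original configuration — the only discrepancies come from points within distance $L$ of the cell boundary, and these form a fraction $O(L/N)$ of all points, contributing at most $\delta/4$ to the per-point energy once $N$ is taken large enough (the per-pair interactions are bounded by $f(\text{something})\le f(0^+)$-type bounds, or more carefully by the already-truncated tail). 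Combining the truncation error, the boundary error, and the choice of the good sub-cell, we get $\Ef(\AD+\PP_{\per})\le\Ef(\AD+\PP)+\delta$.

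The main obstacle I expect is making the exchange of $\liminf$ with the Gaussian integral and with the spatial truncation fully rigorous and \emph{uniform enough in $\alpha$}: the truncation length $L$ needed for a given accuracy blows up as $\alpha\to 0$ (slowly decaying Gaussians) and the total mass $\muf$ may be infinite near $\alpha=0$, so one must split the $\alpha$-integral at a small threshold $\alpha_0$, handle the $\alpha<\alpha_0$ part by the crude bound that it contributes a small amount to $\Ef$ (using the integrability hypothesis of Remark \ref{rem:integrability}, i.e. $\alpha^{-1}\in L^1(\muf)$ near $0$), and truncate only the bulk $\alpha\ge\alpha_0$ part with a finite $L=L(\alpha_0,\delta)$. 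Once the problem is reduced to finitely-supported-in-$\alpha$, finite-range interactions, the periodization-and-averaging step is essentially bookkeeping.
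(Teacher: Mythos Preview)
Your plan would ultimately succeed, but it takes a noticeably more elaborate route than the paper, and the ``main obstacle'' you flag is self-inflicted.

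The paper's proof never decomposes $f$ into Gaussians. It works with $f$ directly, using only two properties: $f$ is decreasing, and $r\mapsto r f(r)$ is integrable at infinity (the condition that makes $\Ef(\AD)$ finite). With these in hand, the argument is very short: pick a parallelogram $\Lambda_N$ (a fundamental domain for $N\AD$) along the Van Hove sequence on which the per-point energy is within $\delta$ of $\Ef(\AD+\PP)$; restrict $\PP$ to $\Lambda_N$ and extend $\La_N$-periodically to obtain $\PP_{\per}$; then compute $\Ef(\AD+\PP_{\per})$ along $(\Lambda_{kN})_{k\ge 1}$, noting that $\Lambda_{kN}$ is a disjoint union of $k^2$ translated copies of $\Lambda_N$. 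The only thing left is to bound the cross-interaction between a copy and the rest of the configuration, and this follows immediately from the integrability of $r f(r)$: for each point $x$, the sum $\sum_{y\notin\Lambda_N} f(|x-y|)$ is $o(1)$ as $\dist(x,\partial\Lambda_N)\to\infty$, so the total cross-term is $o(|\Lambda_N|)$ and can be made $\le\delta$ by enlarging $N$.

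By contrast, you first pass to the mixture $f=\int\Pa\,\dd\muf$, then truncate spatially for each $\alpha$, then worry about uniformity of the truncation length as $\alpha\to 0$, then split off small $\alpha$ using $\alpha^{-1}\in L^1(\muf)$. All of this machinery is simply replacing the one-line observation that $\sum_{|y|>L} f(|y|)\to 0$ as $L\to\infty$ whenever $r f(r)$ is integrable. Your pigeonhole step to select a good sub-cell is also unnecessary: since the definition of $\Ef$ is a $\liminf$, one may directly choose a scale along which the average is close to the $\liminf$ (the paper cites the standard fact that the limit exists and coincides along Van Hove sequences, but even without that the $\liminf$ suffices for the upper bound needed here). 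In short: your argument is sound, but the Gaussian decomposition buys you nothing for this lemma and creates the very uniformity issue you then have to resolve.
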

The proof of Lemma \ref{lem:Station} is elementary, we postpone it to Section \ref{sec:proof_periodic}. In order to prove local minimality results, it is thus enough to study periodic perturbations - provided that our analysis \emph{does not depend on the size of the period}. 

\subsection{Properties of periodic perturbations}
\label{sec:properties_of_stationary_perturbations}
Let $\La$ be a sub-lattice of $\AD$, let $\AD / \La$ be the quotient set, which is finite, and let $\PP$ be a $\La$-periodic perturbation.  For $x \in \AD$, we define $\Px$ as the following probability measure on $\R^2$:
\begin{equation}
\label{def:Px}
\Px := \frac{1}{|\AD / \La|} \sum_{x' \in \AD / \La} \delta_{\PP_{x'+x} - \PP_{x'}}.
\end{equation}
This object is crucial for us. It encodes the distribution of “relative displacements” in the direction $x$.
\begin{remark} \label{rem:alwayscentered}
Since $x' \mapsto x' + x \mod \La$ is a permutation of $\AD / \La$, the measure $\Px$ is always centered:
\begin{equation*}
\sum_{x' \in \AD / \La} \PP_{x'+x} - \PP_{x'}  = 0.
\end{equation*}
\end{remark}

\subsubsection*{Energy of $\AD$ plus periodic perturbations.}
For $\alpha > 0$, denote by $\Ea(\bX)$ the $\Pa$-energy of a point configuration $\bX$ in the sense of \eqref{def:Ef}.

\begin{lemma}
If $\PP$ is a periodic perturbation, we have:
\begin{equation}
\label{eq:Ef_Periodic}
\Ea(\AD + \PP) = \sum_{x \in \AD, x \neq 0} \int_{\R^2} \Pa\left(|x + u|\right) \dd \Px(u).
\end{equation}
In particular, 
\begin{equation*}
\Ea(\AD) = \sum_{x \in \AD, x \neq 0} \Pa\left(|x|\right).
\end{equation*}
\end{lemma}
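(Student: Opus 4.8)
The plan is to unfold the definition \eqref{def:Ef} of the $\Pa$-energy and exploit the $\La$-periodicity of $\PP$ (write $\La$ for its period lattice, so that $\AD/\La$ is finite) together with the fact that $\AD$ has covolume $1$. Two preliminary observations. Since $\|\PP\| \le \frac{1}{20}\xstar$, the map $w \mapsto w + \PP_w$ is injective on $\AD$: if two images coincide, the points lie at distance $\le 2\|\PP\| < \xstar$, forcing $w = w'$. Hence it is a bijection from $\AD$ onto $\AD + \PP$, so $\#\{w \in \AD : w + \PP_w \in \BR\} = |(\AD+\PP)\cap\BR|$ for every $r$, and this number is $\sim \pi r^2$ by \eqref{eq:density}.

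First I would reorganize the (ordered) pair sum over $(\AD+\PP)\cap\BR$ by indexing a pair $(p,q)$, with $p = z + \PP_z$ and $q = w + \PP_w$, through its base point $w$ and its ``type'' $x := z - w \in \AD\setminus\{0\}$, using $\PP_z - \PP_w = \PP_{w+x} - \PP_w$. This gives
\[
\frac{1}{|(\AD+\PP)\cap\BR|}\sum_{p,\, q \in (\AD+\PP)\cap\BR,\ p \neq q} \Pa(|p-q|) \;=\; \sum_{x \in \AD \setminus \{0\}} T_r(x),
\]
where $T_r(x) := |(\AD+\PP)\cap\BR|^{-1} \sum_{w \in A_r(x)} \Pa\big(|x + \PP_{w+x} - \PP_w|\big)$ and $A_r(x)$ is the finite set of $w \in \AD$ for which both $w + \PP_w$ and $w + x + \PP_{w+x}$ lie in $\BR$. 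It then suffices to show that $\sum_x T_r(x)$ admits a genuine limit as $r \to \infty$, equal to the right-hand side of \eqref{eq:Ef_Periodic}; the $\liminf$ in \eqref{def:Ef} is then that limit.

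I would obtain this through dominated convergence for series, from two facts. \textbf{(i)~Uniform domination.} Since $A_r(x) \subseteq \{w : w + \PP_w \in \BR\}$, which has $|(\AD+\PP)\cap\BR|$ elements, and since $|\PP_{w+x}-\PP_w| \le 2\|\PP\|$ while $r \mapsto \Pa(r) = e^{-\pi\alpha r^2}$ is nonincreasing, one gets $0 \le T_r(x) \le \Pa\big((|x|-2\|\PP\|)_+\big)$, a bound independent of $r$; the Gaussian decay together with the quadratic growth of $\#(\AD \cap \BR)$ makes $\sum_{x \in \AD\setminus\{0\}}\Pa\big((|x|-2\|\PP\|)_+\big) < \infty$. \textbf{(ii)~Pointwise convergence.} Fix $x$. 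The set $\{w : w+\PP_w\in\BR\}\setminus A_r(x)$ consists of $w$ with $w + \PP_w$ in an annulus of bounded width $|x| + 2\|\PP\|$, hence has $O(r)$ elements, negligible against $|(\AD+\PP)\cap\BR| \sim \pi r^2$; so the prefactor $|A_r(x)|/|(\AD+\PP)\cap\BR| \to 1$. Among the surviving $w$, the residues modulo $\La$ equidistribute as $r \to \infty$ (a standard lattice-point count in the $|\AD/\La|$ translates of $\BR$ by coset representatives of $\La$ in $\AD$), so the average of $\Pa(|x + \PP_{w+x} - \PP_w|)$ over $w \in A_r(x)$ converges to the average over coset representatives. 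Therefore, as $r \to \infty$,
\[
T_r(x) \ \longrightarrow\ \frac{1}{|\AD/\La|}\sum_{x' \in \AD/\La} \Pa\big(|x + \PP_{x'+x} - \PP_{x'}|\big) \;=\; \int_{\R^2}\Pa(|x+u|)\,\dd\Px(u),
\]
the last equality being the definition \eqref{def:Px} of $\Px$. Summing over $x$ proves \eqref{eq:Ef_Periodic}; the formula for $\Ea(\AD)$ is the case $\PP \equiv 0$, a $\AD$-periodic perturbation for which $\Px = \delta_0$.

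The only genuine work lies in (ii): quantifying that the annular (``boundary'') contribution is $o(r^2)$ and that the residues modulo $\La$ equidistribute in $\BR$, while making sure the estimates do not secretly depend on the period — this last point matters since the lemma is meant to be combined with Lemma \ref{lem:Station}, which produces periods of unbounded size. Step (i) and the rewriting in terms of $x$ are pure bookkeeping, and the fast Gaussian tail renders the interchange of limit and infinite sum costless.
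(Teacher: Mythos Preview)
Your argument is correct and essentially self-contained, but the paper takes a shorter route: it invokes \cite[Lemma~9.1]{cohn2007universally}, which directly gives
\[
\Ea(\AD+\PP)=\frac{1}{|\AD/\La|}\sum_{x',x''\in\AD/\La}\ \sum_{x\in\La,\ x+x'-x''\neq 0}\Pa\bigl(|x+x'+\PP_{x'}-x''-\PP_{x''}|\bigr),
\]
and then reaches \eqref{eq:Ef_Periodic} by two elementary changes of variables ($\PP_{x'}\to\PP_{x'+x}$ by $\La$-periodicity, then $x''':=x+x'$, then $y:=x'''-x''$). So the paper absorbs your steps (i)--(ii) --- the dominated-convergence and equidistribution arguments that justify passing from the $\liminf$ in \eqref{def:Ef} to an exact average over a period --- into a citation. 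What you gain is a proof that does not rely on an external reference and that makes the limit transparent; what the paper gains is brevity. Your closing remark about period-independence is a good instinct but is not actually needed here: the lemma is applied to a \emph{fixed} periodic perturbation, and it is only the \emph{subsequent} analysis (Lemmas~\ref{lem:largealpha}--\ref{lem:smallalpha}) that must be uniform in the period.
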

\begin{proof}
This is essentially \cite[Lemma 9.1]{cohn2007universally}, which states\footnote{Their $\Lambda$ is our $\La$, their $N$ is our $|\AD / \La|$, and their points $v$ corresponds to our perturbed lattice points $x + \PP_x$. Their sum runs over $x \in \La$ such that $x + x' - x'' + \PP_{x'} - \PP_{x''} \neq 0$. Since we work under the assumption \eqref{eq:pp100}, and $\PP$ is $\La$-periodic, we have $x + x' - x'' + \PP_{x'} - \PP_{x''} = 0$ iff $x + x' - x'' = 0$}:
\begin{equation*}
\Ea(\AD + \PP) = \frac{1}{|\AD / \La|} \sum_{x', x'' \in \AD / \La} \  \sum_{x \in \La, x + x' - x'' \neq 0} \Pa\left(|x+ x' + \PP_{x'} - x'' - \PP_{x''}|\right).
\end{equation*}
Since $\PP$ is $\La$-periodic we may replace $\PP_{x'}$ by $\PP_{x' + x}$. Moreover, when $x'$ varies in $\AD / \La$ and $x$ varies in $\La$, then $x''' := x+x'$ varies in $\AD$. We thus have:
\begin{equation*}
\Ea(\AD + \PP) = \frac{1}{|\AD / \La|} \sum_{x''' \in \AD} \  \sum_{x'' \in \AD / \La, x''' - x'' \neq 0} \Pa\left(|x''' - x'' + \PP_{x'''} - \PP_{x''}| \right).
\end{equation*}
For $x''$ in $\AD / \La$, when $x'''$ varies in $\AD \setminus \{x''\}$, then $y := x''' - x''$ varies in $\AD \setminus \{0\}$. We may thus write:
\begin{equation*}
\Ea(\AD + \PP) =  \sum_{y \in \AD, y \neq 0} \left(\frac{1}{|\AD / \La|} \sum_{x'' \in \AD / \La} \Pa\left(|y + \PP_{y + x''} - \PP_{x''}| \right)\right), 
\end{equation*}
and by definition of $\Px$, the right-hand side coincides with the right-hand side of \eqref{eq:Ef_Periodic}.
\end{proof}

\subsubsection*{Probabilistic formulation.} 
If $\PP$ is $\La$-periodic for some sub-lattice $\La$, let $\bPP$ be the random family of vectors defined by: 
\begin{equation}
\label{eq:bPP}
\bPP : = (\PP_{x + x^{\sbullet}})_{x \in \AD},
\end{equation} 
where $x^{\sbullet}$ is chosen uniformly at random in the period $\AD / \La$. It is $\AD$-stationary in the sense that:
\begin{equation}
\label{eq:bPP_stat}
\text{For all $z \in \AD$, $\left( \bPP_{x} \right)_{x \in \AD}$ and $\left( \bPP_{x + z} \right)_{x \in \AD}$ have the same finite-dimensional distributions.}
\end{equation}
With this point of view, the measure $\Px$ defined in \eqref{def:Px} represents the law of $\bPP_{x} - \bPP_0$. In particular:
\begin{equation}
\label{eq:QxProbabilistic}
\int_{\R^2} |u|^2 \dd \Px(u) = \E\left[ \left| \bPP_{x} - \bPP_0 \right|^2 \right].
\end{equation}

\subsection{The autocorrelation function}
\label{sec:autocorrelation}
Let $\bPP$ be a random, $\AD$-stationary perturbation in the sense of \eqref{eq:bPP_stat}. For $x \in \AD$, let $\bPPU_x, \bPPD_x$ be the coordinates of $\bPP_x$ in the canonical basis. Assume that $\E \left[ \left|\bPP_0 \right|^2 \right]$ is finite.
\begin{itemize}
\item We define $\CC :  \AD \to \mathcal{M}_2(\R)$ as:
\begin{equation}
\label{eq:defCcx}
\CC : x \mapsto \left( \begin{matrix} \E\left[ \left( \bPPU_x - \bPPU_0 \right)^2 \right] &  \E\left[\left( \bPPU_x - \bPPU_0 \right) \left( \bPPD_x - \bPPD_0 \right) \right] \\ \E\left[\left( \bPPU_x - \bPPU_0 \right) \left( \bPPD_x - \bPPD_0 \right) \right]&  \E\left[ \left( \bPPD_x - \bPPD_0 \right)^2 \right]  \end{matrix}  \right).
\end{equation}
We write $\CC_x$ for the value of the matrix at $x \in \AD$ and denote by $\Ccab_x$ ($a,b \in \{1,2\}$) its coefficients. 
\item We define the \emph{autocorrelation} $\RR  : \AD \to \mathcal{M}_2(\R)$ 
\begin{equation}
\label{eq:defGx}
\RR : x \mapsto \left( \begin{matrix} \E\left[ \bPPU_x  \bPPU_0 \right] &  \E\left[ \hal \left(\bPPU_x  \bPPD_0  + \bPPD_x  \bPPU_0 \right) \right] \\ \E\left[  \hal \left(\bPPU_x  \bPPD_0  + \bPPD_x  \bPPU_0 \right) \right] &  \E\left[ \bPPD_x  \bPPD_0 \right]  \end{matrix}  \right),
\end{equation}
We write similarly $\RR_x$ and $\RRab_x$ for $a,b \in \{1,2\}$.
\end{itemize}
Denote by $^T$ the transpose of a matrix. We have, for all $x \in \AD$:
\begin{equation*}
\CC_x = \E\left[ (\bPP_x - \bPP_0) (\bPP_x - \bPP_0)^{T} \right], \quad \RR_x = \hal \E\left[ \bPP_x (\bPP_0)^{T} + \bPP_0 (\bPP_x)^{T} \right].
\end{equation*}
Using stationarity, a direct computation gives:
\begin{equation}
\label{eq:G0Gx}
\CC_x = 2\left( \RR_0 - \RR_x  \right), \quad \E\left[ \left( \bPP_x - \bPP_0 \right)^2 \right] = \Tr \CC_x = 2 \Tr \left(\RR_0 - \RR_x\right).
\end{equation}
Observe that $\RR$ is a positive semidefinite matrix-valued function in the following sense: for all maps $h : \AD \to \mathbb{C}$ with finite support, the matrix
\begin{equation*}
\sum_{x, y \in \AD} h(x) \overline{h(y)} \RR_{x-y} = \hal \sum_{x, y \in \AD} h(x) \overline{h(y)} \E\left[ \bPP_x (\bPP_y)^{T} + \bPP_y (\bPP_x)^{T} \right] 
\end{equation*}
is positive semi-definite. This has important “spectral” consequences, as we present next.

\subsection{Spectral measure(s)}
\label{sec:Spectral}
Denote by $\STP$ the cone of symmetric positive $2 \times 2$ matrices with real coefficients. A $\STP$-valued measure is a map defined on the Borel $\sigma$-algebra of $\R^2$, with values in $\STP$, satisfying the natural requirements ($\sigma$-additivity etc.) for being a measure, see \cite[Sec. 1.2 \& 1.3]{kimsey2011matrix}. 

\subsubsection*{Existence of the spectral measure.}
The following result generalizes Bochner's theorem on Fourier transforms of positive-definite functions.
\begin{lemma}[\cite{falb1969theorem}]
\label{lem:spectral_measure}
There exists a $\STP$-valued measure $\Cs$, defined on the Pontryagin dual $\Omega$, such that:
\begin{equation}
\label{GasTFrho}
\RR_x = \int_{\Omega} \cos\left(2 \pi x \cdot \omega\right) \dd \Cs(\omega) \text{ for all } x \in \AD.
\end{equation} 
\end{lemma}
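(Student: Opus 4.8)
The plan is to realize $\RR$ as the Fourier transform of a matrix-valued measure by following the standard Bochner–Herglotz argument, adapted to the matrix-valued, lattice setting. First I would record the precise positivity hypothesis: for every finitely supported $h : \AD \to \mathbb{C}$,
\begin{equation*}
\sum_{x,y \in \AD} h(x) \overline{h(y)} \, \RR_{x-y} \succeq 0,
\end{equation*}
which was verified just above the statement. Testing this with vectors $h$ of the form $h(x) = \lambda^{T} g(x)$ for a fixed $\lambda \in \R^2$, one gets that the scalar function $x \mapsto \lambda^{T} \RR_x \lambda$ is a positive-definite function on the (countable, abelian) group $\AD$. By the classical Bochner theorem for positive-definite functions on $\AD$ — whose Pontryagin dual is the compact torus $\Omega = \R^2 / \ADD$ — there is a unique nonnegative finite measure $\mu_\lambda$ on $\Omega$ with $\lambda^{T} \RR_x \lambda = \int_\Omega e^{2i\pi x \cdot \omega} \dd \mu_\lambda(\omega)$; since the left side is real and even in $x$, the exponential may be replaced by $\cos(2\pi x \cdot \omega)$.

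Next I would assemble these scalar measures into a single $\STP$-valued measure. For a Borel set $A \subseteq \Omega$, the map $\lambda \mapsto \mu_\lambda(A)$ is a nonnegative quadratic form in $\lambda$ (positivity and the quadratic dependence follow from uniqueness of the spectral measure in Bochner's theorem together with linearity of the defining identity in the "rank-one" data), hence is represented by a unique symmetric positive semidefinite matrix $\Cs(A)$ via $\mu_\lambda(A) = \lambda^{T} \Cs(A) \lambda$. To get the off-diagonal entries cleanly one uses the polarization identity: $\Cs^{12}(A) = \tfrac14(\mu_{e_1+e_2}(A) - \mu_{e_1-e_2}(A))$. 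One then checks that $A \mapsto \Cs(A)$ is $\sigma$-additive and that $\Cs(\Omega) \in \STP$ — this reduces, entrywise, to $\sigma$-additivity of the scalar measures $\mu_{e_1}, \mu_{e_2}, \mu_{e_1 \pm e_2}$, which is immediate. Finally, plugging $\lambda = e_i$, $\lambda = e_j$, and $\lambda = e_i + e_j$ into the scalar identity $\lambda^{T}\RR_x\lambda = \int_\Omega \cos(2\pi x\cdot\omega)\dd\mu_\lambda(\omega)$ and polarizing recovers \eqref{GasTFrho} entry by entry.

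Alternatively — and this is probably cleaner to write — I would simply cite \cite{falb1969theorem} (as the lemma statement already does), whose theorem is exactly the matrix-valued generalization of Bochner's theorem, and note that the positive-semidefiniteness established above is precisely its hypothesis; the only thing to verify is that the function $\RR$ is, in our normalization, positive-definite in the required sense and that $\AD$ is a discrete abelian group with compact dual $\Omega$, both of which are routine. The one mild subtlety worth a sentence is that $\RR_x$ is symmetric (not just square) for each $x$ — which holds by construction in \eqref{eq:defGx} — and that $\RR_{-x} = \RR_x$, so that the spectral measure can be taken real and the kernel in \eqref{GasTFrho} is the cosine rather than a complex exponential; this is where stationarity of $\bPP$ is used. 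The main (very modest) obstacle is bookkeeping: checking that the entrywise-defined $\Cs$ genuinely takes values in the cone $\STP$ rather than merely having symmetric entries, which follows from nonnegativity of all the $\mu_\lambda$ but should be stated explicitly.
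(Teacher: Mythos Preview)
Your proposal is correct and follows essentially the same route as the paper: reduce to the scalar Bochner theorem on $\AD$ and recover the off-diagonal entry via polarization, then check that the assembled matrix-valued measure lands in $\STP$. The paper phrases this entrywise (apply Bochner to $\RR^{11}$ and $\RR^{22}$, write $\RR^{12}$ as a difference of two positive-definite functions), whereas you phrase it in terms of the family $\mu_\lambda$ for general $\lambda$ and then polarize; these are the same argument, and the paper likewise notes the symmetry $\RR_{-x}=\RR_x$ to justify the cosine kernel.
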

The measure $\Cs$ is called the \emph{spectral measure} of the perturbations. For a pedagogical presentation, building on the classical theorem of Bochner, we refer to  \cite[Thm. 2.2.3]{kimsey2011matrix}. We sketch the argument:
\begin{proof}
Both $x \mapsto \RR^{11}_x$ and $x \mapsto \RR^{22}_x$ are positive-definite (real-valued) functions in the usual sense, so by Bochner's theorem their Fourier transform is a finite positive measure on the Pontryagin dual $\Omega$. As for $x \mapsto \RR^{12}_x = \RR^{21}_x$, it is not positive-definite in general, but can easily be written as the difference of two positive-definite functions, thus its Fourier transform is a (signed) measure on $\Omega$, with finite total mass. It remains to check that those four measures put together define a $\STP$-valued measure $\Cs$.
\end{proof}
\begin{remark}
The general result is stated in the Hermitian case, however here $\RR$ is real-symmetric, and $\RR_x = \RR_{-x}$ by stationarity, thus $\RR_x^{ab} = \int_{\Omega} \cos(2 \pi x \cdot \omega) \dd \Csab(\omega)$ and $\RR_x = \int_{\Omega} \cos(2 \pi x \cdot \omega) \dd \Cs(\omega)$.
\end{remark}

\subsubsection*{The trace measure and trace derivative.}
Let $\Csab$ ($a,b \in \{1,2\}$) be the components of $\Cs$, which are (real-valued) finite measures on $\Omega$, corresponding to the Fourier transform of the components of $\RR$. Define the (real-valued) “trace” measure $\tCs$ as:
\begin{equation}
\label{eq:tCs}
\tCs := \Cs^{11} + \Cs^{22},
\end{equation}
which is positive and finite, because so are $\Cs^{11}$ and $\Cs^{22}$. Since $\Cs$ takes values in $\STP$, we know that for all $a, b \in \{1,2\}$, the measure $\Csab$ is absolutely continuous with respect to $\tCs$. Define the “trace derivative” $\Cst$ as the matrix-valued measurable map $\omega \mapsto \Cst(\omega)$, defined on $\Omega$, such that:
\begin{equation*}
\Cst := \frac{\dd \Cs}{\dd \tCs}  \text{ in the sense that } \left(\Cst\right)^{ab} = \frac{\dd \Csab}{\dd \tCs} \text{ for $a,b \in \{1,2\}$.}
\end{equation*}
It follows from \cite{rosenberg1964square} that for $\tCs$-a.e. $\omega$ in $\Omega$, the matrix $\Cst(\omega)$ is in $\STP$ and can be diagonalized\footnote{The choice of $v_1, v_2, \lambda_1, \lambda_2$ can be made in a measurable fashion \cite{wilcox1972measurable}.} in some orthonormal basis $(v_1(\omega), v_2(\omega))$, with eigenvalues $\lambda_1(\omega), \lambda_2(\omega)$ such that 
\begin{equation*}
\lambda_1(\omega) + \lambda_2(\omega) =1.
\end{equation*}
If $f = (f_1, f_2) : \Omega \to \R^2$ is bounded, we can write:
\begin{equation}
\label{quadraticCs}
\sum_{a,b \in \{1,2\}} \int_{\Omega} f_a f_b \dd \Csab = \int_{\Omega} \left(f \cdot \Cst f\right) \dd \tCs = \int_{\Omega} \left(\lambda_1 (f \cdot v_1)^2 + \lambda_2 (f \cdot v_2)^2 \right) \dd \tCs.
\end{equation}
\begin{lemma}
\label{lem:RRORRxIntomegacarre}
We have for all $x \in \AD$:
\begin{equation*}
\int_{\R^2} |u|^2 \dd \Px(u) \leq 8 \pi^2 |x|^2 \Tr\left(\int_{\Omega} |\omega|^2 \dd \Cs(\omega)\right).
\end{equation*}
\end{lemma}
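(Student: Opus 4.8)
The plan is to reduce the left-hand side to a scalar quantity attached to the trace measure $\tCs$, and then apply the elementary bound $1-\cos t \le t^2$ together with Cauchy--Schwarz.

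First I would rewrite $\int_{\R^2}|u|^2\dd\Px(u)$ in spectral terms. By the probabilistic identification \eqref{eq:QxProbabilistic} we have $\int_{\R^2}|u|^2\dd\Px(u) = \E\!\left[|\bPP_x - \bPP_0|^2\right]$, and by \eqref{eq:G0Gx} this equals $\Tr \CC_x = 2\,\Tr(\RR_0 - \RR_x)$. Now the spectral representation \eqref{GasTFrho} gives $\RR_0 - \RR_x = \int_{\Omega}\bigl(1 - \cos(2\pi x\cdot\omega)\bigr)\dd\Cs(\omega)$. Since $\Tr$ is a bounded linear functional on $\mathcal{M}_2(\R)$, it commutes with the matrix-valued integral, and by definition of $\tCs = \Cs^{11} + \Cs^{22}$ this yields
\begin{equation*}
\Tr(\RR_0 - \RR_x) = \int_{\Omega}\bigl(1 - \cos(2\pi x\cdot\omega)\bigr)\dd\tCs(\omega).
\end{equation*}

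Next I would bound the integrand pointwise. The elementary inequality $1 - \cos t \le t^2$, valid for all $t\in\R$ (one could even use the sharper $t^2/2$), applied with $t = 2\pi x\cdot\omega$, combined with Cauchy--Schwarz $(x\cdot\omega)^2 \le |x|^2|\omega|^2$, gives $1 - \cos(2\pi x\cdot\omega) \le 4\pi^2(x\cdot\omega)^2 \le 4\pi^2|x|^2|\omega|^2$. Integrating against the positive finite measure $\tCs$, and using that $\int_{\Omega}|\omega|^2\dd\tCs(\omega) = \Tr\!\left(\int_{\Omega}|\omega|^2\dd\Cs(\omega)\right)$, I obtain $\Tr(\RR_0 - \RR_x) \le 4\pi^2|x|^2\,\Tr\!\left(\int_{\Omega}|\omega|^2\dd\Cs(\omega)\right)$; multiplying by $2$ produces the claimed inequality with constant $8\pi^2$ (with $1-\cos t\le t^2/2$ one would even get $4\pi^2$).

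I do not expect a genuine obstacle here. The only points that deserve a word of care are the finiteness of $\int_{\Omega}|\omega|^2\dd\tCs(\omega)$ --- which is implicit in the statement, and if it fails the bound is trivial --- and the interchange of trace and integral, which is immediate by linearity and continuity of $\Tr$. The whole argument is essentially a couple of lines once the spectral representation of $\RR$ is in place.
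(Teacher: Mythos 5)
Your proof is correct and follows essentially the same approach as the paper: both invoke the spectral representation \eqref{GasTFrho}, the elementary bounds $1-\cos\theta\le\theta^2$ and Cauchy--Schwarz, and the identities \eqref{eq:G0Gx} and \eqref{eq:QxProbabilistic}. The only cosmetic difference is that the paper keeps the inequality at the level of matrix-valued integrals (using that $\Cs$ is $\STP$-valued, so the difference of integrals is positive semi-definite) and takes the trace at the end, while you pass to the scalar trace measure $\tCs$ first; your remark that the sharper bound $1-\cos\theta\le\theta^2/2$ would yield the constant $4\pi^2$ is also accurate, though immaterial for the paper's purposes.
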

\begin{proof}
Using the definition of $\Cs$ and the elementary inequality $1 - \cos(\theta) \leq \theta^2$, we have:
\begin{equation}
\RR_0 - \RR_x = \int_{\Omega} \left( 1 - \cos(2  \pi x \cdot \omega) \right) \dd \Cs(\omega) \leq 4 \pi^2 \int_{\Omega} |x \cdot \omega|^2 \dd \Cs(\omega) \leq 4 \pi^2 |x|^2 \int_{\Omega} |\omega|^2 \dd \Cs(\omega).
\end{equation}
In particular, since taking $2\Tr(\RR_0 - \RR_x)$ yields the second moment of $\Px$ (see \eqref{eq:G0Gx} and \eqref{eq:QxProbabilistic}), we get: 
\begin{equation}
\label{G0Gxrho}
\int_{\R^2} |u|^2 \dd \Px(u) = 2 \Tr(\RR_0 - \RR_x) \leq 8 \pi^2 |x|^2 \Tr\left(\int_{\Omega} |\omega|^2 \dd \Cs(\omega)\right). 
\end{equation}
\end{proof}

\clearpage

\subsection{A “periodic \texorpdfstring{$2$-design}{2-design}” inequality}
\label{sec:a_random_2_design_property}
\paragraph{Regular hexagons are $2$-designs.}
Each non-zero lattice point is part of a “shell” of six distinct lattice points, obtained by applying a rotation of angle $\frac{k\pi}{3}$ ($k = 0, \dots, 5$), which form the vertices of a regular hexagon.  Regular hexagons enjoy a certain geometric property known as being a \emph{$2$-design}.
\begin{definition}{\cite[Lemma 4.3]{coulangeon2012energy}}
Let $S$ be a finite set of points on the circle of radius $r > 0$, assume that $S$ is symmetric around the origin. It is said to be a \emph{spherical $2$-design} when for all $u \in \R^2$, we have:
\begin{equation}
\label{eq:2design}
\frac{1}{|S|} \sum_{s \in S} (s \cdot u)^2 = \hal r^2 |u|^2.
\end{equation}
\end{definition}
The right-hand side of \eqref{eq:2design} is what one would obtain by averaging $s \mapsto (s \cdot u)^2$ over the circle $r \mathbb{S}^1$ of radius $r$, in other words we have the equality:
\begin{equation}
\label{2designaverage}
\frac{1}{|S|} \sum_{s \in S} (s \cdot u)^2 = \frac{1}{2\pi r} \int_{s \in r \mathbb{S}^1} (s \cdot u)^2 \dd s.
\end{equation}
\begin{remark}[Regular hexagons are $5$-designs]
\label{rem:5design}
For a regular hexagon of sidelength $r$, one can in fact replace the power $2$ in \eqref{2designaverage} by any power up to $5$. Equivalently (see \cite[Definition 4.2 \& Lemma 4.3]{coulangeon2012energy}), for any polynomial $\mathrm{P} : \R^2 \to \R$ of total degree up to $5$, we have:
\begin{equation*}
\frac{1}{|S|} \sum_{s \in S} \mathrm{P}(s) = \frac{1}{2\pi r} \int_{s \in r \mathbb{S}^1} \mathrm{P}(s) \dd s.
\end{equation*}
\end{remark}
\paragraph{A periodic $2$-design inequality.} If $S$ is a “shell” of $\AD$ as described above, and $(\Px)_{x \in \AD}$ are the measures associated to a periodic perturbation as in \eqref{def:Px}, can we ensure that, for some $\cc > 0$:
\begin{equation}
\label{2design_want}
\sum_{s \in S}\int_{\R^2} (s \cdot u)^2 \dd \mathrm{Q}_s(u) \geq \cc \sum_{s \in S} \int_{\R^2}  |u|^2 \dd \mathrm{Q}_s(u) \ ?
\end{equation}
If we had $\mathrm{Q}_s = \mathrm{Q}$ for some $\mathrm{Q}$ independent on $s$, we could exchange the sum and the integral and write:
\begin{equation*}
\sum_{s \in S}  \int_{\R^2} (s \cdot u)^2 \dd \mathrm{Q}_s(u) =  \int_{\R^2}  \left( \sum_{s \in S} (s \cdot u)^2  \right) \dd \mathrm{Q}(u) = \frac{1}{2} r^2 \times |S| \times \int_{\R^2} |u|^2 \dd \mathrm{Q}(u), 
\end{equation*}
using the $2$-design property \eqref{eq:2design}, which would yield \eqref{2design_want}. The problem\footnote{This is, in essence, one of the breaking points in \cite{coulangeon2012energy}, see their erratum \cite{coulangeon2022erratum}.} is that $\mathrm{Q}_s$ \emph{does} depend on $s$. 

The next proposition shows that, by averaging over a period, one recovers a certain inequality reminiscent of the $2$-design property. For a regular hexagon, we sometimes say “radius” instead of sidelength.
\begin{proposition}[Periodic $2$-design inequality]
\label{prop:Random2design}
Let $S$ be a shell of $\AD$ of radius $r > 0$. Let $\PP$ be a perturbation which is $N\AD$-periodic for some $N \geq 1$, in the sense of \eqref{def:periodic}. We have:
\begin{equation}
\label{eq:random2design}
 \sum_{s \in S} \sum_{x' \in \AD / N \AD} \left|s \cdot (\PP_{s+x'} - \PP_{x'})\right|^2 \geq \frac{1}{4} r^2  \sum_{s \in S} \sum_{x' \in \AD / N \AD} \left|\PP_{s+x'} - \PP_{x'}\right|^2.
\end{equation}
\end{proposition}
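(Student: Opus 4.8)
The plan is to pass to Fourier series on the finite abelian group $\AD/N\AD$, reduce \eqref{eq:random2design} to one inequality per frequency, and then to an elementary trigonometric inequality that turns out to be an instance of the triangle inequality. \emph{Step 1 (Fourier reformulation).} Expand $\PP:\AD/N\AD\to\R^2$ as a Fourier series $\PP_x=\sum_k\hPP(k)\,e^{2i\pi k\cdot x}$, with $k$ ranging over $\frac1N\ADD/\ADD$ and $\hPP(k)\in\mathbb{C}^2$ defined coordinate-wise via \eqref{Fourier_Lattice}. Then
\begin{equation*}
\PP_{s+x'}-\PP_{x'}=\sum_k\hPP(k)\,\bigl(e^{2i\pi k\cdot s}-1\bigr)\,e^{2i\pi k\cdot x'},
\end{equation*}
and applying the Plancherel identity \eqref{eq:Plancherel} to each coordinate, and again to the scalar function $x'\mapsto s\cdot(\PP_{s+x'}-\PP_{x'})$, yields
\begin{align*}
\frac1{N^2}\sum_{x'\in\AD/N\AD}\bigl|\PP_{s+x'}-\PP_{x'}\bigr|^2 &= \sum_k 4\sin^2(\pi k\cdot s)\,\bigl|\hPP(k)\bigr|^2,\\
\frac1{N^2}\sum_{x'\in\AD/N\AD}\bigl|s\cdot(\PP_{s+x'}-\PP_{x'})\bigr|^2 &= \sum_k 4\sin^2(\pi k\cdot s)\,\bigl|s\cdot\hPP(k)\bigr|^2,
\end{align*}
using $|e^{i\theta}-1|^2=4\sin^2(\theta/2)$ and $|\hPP(k)|^2:=|\hPP^1(k)|^2+|\hPP^2(k)|^2$. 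Writing $S=\{s_1,s_2,s_3,-s_1,-s_2,-s_3\}$, where $s_2,s_3$ are the images of $s_1$ under the rotations of angle $\frac{\pi}{3},\frac{2\pi}{3}$, and noting that $\sin^2(\pi k\cdot s)$, $|s\cdot\hPP(k)|^2$ and $|\hPP(k)|^2$ are invariant under $s\mapsto -s$, the sums over $S$ equal twice those over $\{s_1,s_2,s_3\}$. Hence \eqref{eq:random2design} follows once we show, for every frequency $k$,
\begin{equation}
\label{eq:plan_freqwise}
\sum_{j=1}^3\sin^2(\pi k\cdot s_j)\,\bigl|s_j\cdot\hPP(k)\bigr|^2\;\ge\;\frac14 r^2\sum_{j=1}^3\sin^2(\pi k\cdot s_j)\,\bigl|\hPP(k)\bigr|^2.
\end{equation}

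\emph{Step 2 (reduction to a $2\times2$ spectral inequality).} Fix $k$, put $w_j:=\sin^2(\pi k\cdot s_j)\ge0$, $\hat s_j:=s_j/r$, and write $\hPP(k)=u+iv$ with $u,v\in\R^2$. Since $|s_j\cdot\hPP(k)|^2=r^2\bigl((\hat s_j\cdot u)^2+(\hat s_j\cdot v)^2\bigr)$ and $|\hPP(k)|^2=|u|^2+|v|^2$, \eqref{eq:plan_freqwise} follows once we know that for every $t\in\R^2$,
\begin{equation*}
\sum_{j=1}^3 w_j\,(\hat s_j\cdot t)^2\;\ge\;\frac14\,|t|^2\sum_{j=1}^3 w_j,
\end{equation*}
that is, $M-\frac14(\Tr M)\,\id\succeq0$ where $M:=\sum_{j=1}^3 w_j\,\hat s_j\hat s_j^{T}\succeq0$. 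A symmetric $2\times2$ matrix is positive semidefinite iff its trace and determinant are nonnegative; the trace of $M-\frac14(\Tr M)\id$ is $\frac12\Tr M\ge0$, so only the determinant is at stake. As $\Tr M$ and $\det M$ are rotation-invariant, we compute them with $\hat s_1=(1,0)$, $\hat s_2=(\tfrac12,\tfrac{\sqrt3}{2})$, $\hat s_3=(-\tfrac12,\tfrac{\sqrt3}{2})$, obtaining $\Tr M=w_1+w_2+w_3$ and $\det M=\tfrac34(w_1w_2+w_2w_3+w_3w_1)$. Using $\det(M-c\,\id)=\det M-c\Tr M+c^2$ with $c=\tfrac14\Tr M$, the determinant condition becomes
\begin{equation}
\label{eq:plan_symmineq}
w_1^2+w_2^2+w_3^2\;\le\;2\bigl(w_1w_2+w_2w_3+w_3w_1\bigr).
\end{equation}

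\emph{Step 3 (the triangle inequality).} Inequality \eqref{eq:plan_symmineq} fails for general nonnegative weights (take $w_1=1$, $w_2=w_3=0$), so the shell structure must enter. From $s_2=R_{\pi/3}\,s_1$, $s_3=R_{2\pi/3}\,s_1$ and the identity $e^{2i\pi/3}=e^{i\pi/3}-1$ we get $s_3=s_2-s_1$, hence, with $\alpha:=\pi k\cdot s_1$ and $\beta:=\pi k\cdot s_2$, $\pi k\cdot s_3=\beta-\alpha$, so that $\sqrt{w_1}=|\sin\alpha|$, $\sqrt{w_2}=|\sin\beta|$, $\sqrt{w_3}=|\sin(\beta-\alpha)|$. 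The bound
\begin{equation*}
|\sin(\beta-\alpha)|\le|\sin\beta|\,|\cos\alpha|+|\cos\beta|\,|\sin\alpha|\le|\sin\alpha|+|\sin\beta|,
\end{equation*}
together with its two cyclic variants (from $\sin\beta=\sin((\beta-\alpha)+\alpha)$ and $\sin\alpha=\sin(\beta-(\beta-\alpha))$), shows that $\sqrt{w_1},\sqrt{w_2},\sqrt{w_3}$ are the side lengths of a (possibly degenerate) triangle. By Heron's formula that triangle has area $\mathcal{A}$ with $16\,\mathcal{A}^2=2(w_1w_2+w_2w_3+w_3w_1)-(w_1^2+w_2^2+w_3^2)\ge0$, which is exactly \eqref{eq:plan_symmineq}, completing the proof.

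The substance is in Steps 2–3: after the short $2\times2$ determinant computation the frequency-wise inequality collapses to the symmetric inequality \eqref{eq:plan_symmineq}, and it is precisely the relation $s_3=s_2-s_1$ coming from the hexagonal structure that forces $\sqrt{w_1},\sqrt{w_2},\sqrt{w_3}$ to obey the triangle inequality — something that does not hold for arbitrary weights. Everything else (the Plancherel bookkeeping in Step 1, the passage from $\mathbb{C}^2$ to real vectors in Step 2) is routine.
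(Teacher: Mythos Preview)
Your proof is correct and follows the same overall route as the paper: pass to Fourier on $\AD/N\AD$ via Plancherel, reduce to a per-frequency $2\times 2$ spectral inequality, compute $\Tr M$ and $\det M$ in the explicit basis, and arrive at the symmetric inequality $w_1^2+w_2^2+w_3^2\le 2(w_1w_2+w_2w_3+w_3w_1)$, which is equivalent to the paper's $\tfrac{w_1w_2+w_2w_3+w_3w_1}{(w_1+w_2+w_3)^2}\ge\tfrac14$.

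The one genuine difference is in how the final inequality is established. The paper (Appendix~B) substitutes $c_j=\cos\theta_j$, uses $c_3=c_1c_2+\sin\theta_1\sin\theta_2$ from $s_3=s_2-s_1$, and grinds through the algebra until the expression factors as $16\sin^2(\theta_1/2)\sin^2(\theta_2/2)\sin^2((\theta_1-\theta_2)/2)\ge 0$. Your Step~3 is cleaner: from $|\sin(\beta-\alpha)|\le|\sin\alpha|+|\sin\beta|$ (and its cyclic variants) you recognize that $\sqrt{w_1},\sqrt{w_2},\sqrt{w_3}$ are the sides of a triangle, and then Heron's formula $16\mathcal{A}^2=2(w_1w_2+w_2w_3+w_3w_1)-(w_1^2+w_2^2+w_3^2)\ge 0$ gives the inequality at once. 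This is the same content---both arguments ultimately exploit $s_3=s_2-s_1$---but your packaging via the triangle inequality and Heron is more transparent and explains \emph{why} the inequality holds rather than merely verifying it.
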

Using our definition \eqref{def:Px} of the measures $(\Px)_{x \in \AD}$, the inequality \eqref{eq:random2design} (divided by $N^2$) yields \eqref{2design_want} with $\cc = \frac{1}{4} r^2$. The proof of Proposition \ref{prop:Random2design} relies on the following geometric lemma: 
\begin{lemma}
\label{lem:geomwk}
Let $S$ be the vertices of a regular hexagon of radius $r$, let $k$ be in $\R^2$. For $s \in S$, define: 
\begin{equation}
\label{def:wsu}
w_s(k) := 2 \left(1 - \cos\left(2 \pi k \cdot s\right) \right).
\end{equation}
The following inequality holds for all $v$ in $\R^2$:
\begin{equation}
\label{eq:geomwk}
\sum_{s \in S} w_s(k) (s \cdot v)^2 \geq \frac{1}{4} \left(\sum_{s \in S} w_s(k)\right) r^2 |v|^2. 
\end{equation}
\end{lemma}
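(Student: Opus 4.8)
The plan is to reduce the inequality to a statement about a single quadratic form on $\R^2$ and then verify it by a finite computation exploiting the hexagonal symmetry. First I would observe that the set $S$ is invariant under rotation by $\pi/3$, hence invariant under $s \mapsto -s$, and the weights $w_s(k)$ depend on $s$ only through $\cos(2\pi k\cdot s)$, so $w_{-s}(k) = w_s(k)$. Therefore both sides of \eqref{eq:geomwk} can be written as sums over the three "axes" of the hexagon, each axis carrying a weight $w_{s_i}(k) \geq 0$ (since $1 - \cos \geq 0$, the weights are nonnegative, which is the crucial structural fact). Writing $s_1, s_2, s_3$ for a choice of three pairwise non-antipodal vertices, the left-hand side is $2\sum_{i=1}^3 w_{s_i}(k)\,(s_i\cdot v)^2$ and the sum of weights on the right is $2\sum_{i=1}^3 w_{s_i}(k)$.

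Next I would introduce, for each $i$, the rank-one positive semidefinite matrix $M_i := s_i s_i^{T}/|s_i|^2 = s_i s_i^{T}/r^2$, so that $(s_i\cdot v)^2 = r^2\, v^{T} M_i v$. The inequality \eqref{eq:geomwk} becomes
\begin{equation*}
\sum_{i=1}^3 w_{s_i}(k)\, v^{T} M_i v \;\geq\; \frac{1}{4}\left(\sum_{i=1}^3 w_{s_i}(k)\right) |v|^2,
\end{equation*}
i.e. the symmetric matrix $A := \sum_i w_{s_i}(k) M_i$ satisfies $A \succeq \frac14\big(\sum_i w_{s_i}(k)\big)\,\id$. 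Equivalently, I must show that the smallest eigenvalue of $\sum_i \mu_i M_i$ is at least $\frac14\sum_i \mu_i$ for all nonnegative $\mu_1,\mu_2,\mu_3$. By homogeneity I may normalize $\sum_i \mu_i = 1$, so the $\mu_i$ range over the standard $2$-simplex, and the claim is that $\lambda_{\min}\big(\sum_i \mu_i M_i\big) \geq \frac14$.

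The three unit vectors $s_i/r$ are at mutual angles $60^\circ$ (or $120^\circ$), so $M_1, M_2, M_3$ are the three projections onto lines at $60^\circ$ to one another; one computes $\Tr(\sum_i \mu_i M_i) = \sum_i \mu_i = 1$ and $\det(\sum_i \mu_i M_i) = \mu_1\mu_2 + \mu_2\mu_3 + \mu_3\mu_1$, using $M_i M_j + M_j M_i$ and the explicit $60^\circ$ angle (the cross terms contribute $\tfrac14$ each to the determinant since $\langle s_i/r, s_j/r\rangle^2 = \tfrac14$). Hence the eigenvalues $\lambda_\pm$ satisfy $\lambda_+ + \lambda_- = 1$ and $\lambda_+\lambda_- = \mu_1\mu_2+\mu_2\mu_3+\mu_3\mu_1$, so $\lambda_{\min} = \tfrac12\big(1 - \sqrt{1 - 4(\mu_1\mu_2+\mu_2\mu_3+\mu_3\mu_1)}\big)$, and $\lambda_{\min}\geq\tfrac14$ is equivalent to $\mu_1\mu_2+\mu_2\mu_3+\mu_3\mu_1 \geq \tfrac{3}{16}$ — but this is \emph{false} in general on the simplex (it vanishes at vertices), so the bound $\tfrac14$ cannot come from this crude estimate, meaning the constant $\tfrac14$ must instead be extracted more carefully, presumably because when one weight degenerates the corresponding axis drops out but \eqref{eq:geomwk} is still claimed with the \emph{reduced} weight sum on the right — so I should double check by testing $v$ orthogonal to the surviving axes. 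I expect the main obstacle to be precisely this: pinning down the sharp constant by analyzing the worst case over $(\mu_i)$ on the simplex together with the worst direction $v$, i.e. computing $\min_{\mu}\lambda_{\min}\big(\sum_i\mu_i M_i\big)$ — a two-variable optimization that, by the eigenvalue formula above and the symmetry of the configuration, should be attained at an interior symmetric point or on an edge, and which I would verify either by Lagrange multipliers or by a direct case split on which $w_{s_i}(k)$ vanish. Once the constant is confirmed, the lemma follows by plugging back $\mu_i = w_{s_i}(k)$ and multiplying by $r^2$.
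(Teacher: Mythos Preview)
Your reduction to a matrix inequality is essentially the same as the paper's: both arrive at the question of whether the smallest eigenvalue of the normalized matrix $\sum_i \mu_i M_i$ (with $\mu_i$ proportional to $w_{s_i}(k)$) is at least $\tfrac14$, and both compute trace and determinant to reduce this to a scalar inequality. (A minor slip: the determinant is $\tfrac34(\mu_1\mu_2+\mu_2\mu_3+\mu_3\mu_1)$, not $\mu_1\mu_2+\mu_2\mu_3+\mu_3\mu_1$; the factor $\tfrac34$ comes from $\langle s_i/r,s_j/r\rangle^2=\tfrac14$. With this correction, $\lambda_{\min}\geq\tfrac14$ is equivalent to $\mu_1\mu_2+\mu_2\mu_3+\mu_3\mu_1\geq\tfrac14$.)

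The genuine gap is in what you do next. You correctly observe that this inequality is false for arbitrary points of the simplex (it vanishes at the vertices), but you then propose to rescue the argument by optimizing $\lambda_{\min}$ over the full simplex via Lagrange multipliers or a boundary case split. That cannot work: the infimum over the simplex really is $0$, so any argument that treats the $\mu_i$ as free nonnegative weights will fail. The missing idea is that the triple $(w_1,w_2,w_3)$ is \emph{not} arbitrary: the three axes of the hexagon satisfy $s_3=s_2-s_1$, hence $\theta_3=\theta_2-\theta_1$ where $\theta_j=2\pi k\cdot s_j$, and this algebraic relation among the angles prevents one $w_j=2(1-\cos\theta_j)$ from being large while the other two vanish. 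The paper exploits exactly this: writing $c_j=\cos\theta_j$ and using $c_3=c_1c_2+\sin\theta_1\sin\theta_2$, it reduces the needed inequality $w_1w_2+w_2w_3+w_1w_3\geq\tfrac14(w_1+w_2+w_3)^2$ to a trigonometric identity that factors as $16\sin^2(\theta_1/2)\sin^2(\theta_2/2)\sin^2\big((\theta_1-\theta_2)/2\big)\geq 0$. Without invoking the constraint $s_3=s_2-s_1$, your plan has no mechanism to produce the constant $\tfrac14$.
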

We postpone the proof of Lemma \ref{lem:geomwk} to Appendix \ref{sec:proof_of_lemma_ref_lem_geomwk}, and we explain how to deduce Proposition \ref{prop:Random2design}.

\begin{proof}[Proof of Proposition \ref{prop:Random2design}] 
Let $\PP$ be a $N\AD$-periodic perturbation and let $\hPP$ be the Fourier transform of $\PP$ as in \eqref{Fourier_Lattice}.  We first fix $s \in S$ and observe that for all frequencies $k \in \frac{1}{N} \ADD / \AD^\star$, we have 
\begin{equation*}
\widehat{\PP_{\cdot+s}}(k) = e^{-2i\pi s \cdot k} \hPP(k).
\end{equation*} 
Dividing both sides of \eqref{eq:random2design} by $N^2$ and applying Plancherel's identity \eqref{eq:Plancherel} yields:
\begin{equation}
\label{Planche1}
\frac{1}{N^2} \sum_{x' \in \AD / N \AD} \left|\PP_{x'+s} - \PP_{x'}\right|^2 = \sum_{k \in \frac{1}{N} \ADD / \AD^\star} \left|e^{-2i\pi s \cdot k} - 1 \right|^2 |\hPP(k)|^2,
\end{equation}
as well as (using the fact that $\widehat{s \cdot \PP}(k) = s \cdot \hPP(k)$ because the vector $s$ is fixed):
\begin{equation}
\label{Planche2}
\frac{1}{N^2} \sum_{x' \in \AD / N \AD} \left|s \cdot (\PP_{x'+s} - \PP_{x'})\right|^2 =  \sum_{k \in \frac{1}{N} \ADD / \AD^\star} \left|e^{-2i\pi s \cdot k} - 1 \right|^2 |s \cdot \hPP(k)|^2.
\end{equation}
Define, as in \eqref{def:wsu}, $w_s(k) := \left|e^{-2i\pi s \cdot k} - 1 \right|^2 = 2 \left(1 - \cos\left(2 \pi k \cdot s\right) \right)$. Inserting \eqref{Planche1} and \eqref{Planche2} into \eqref{eq:random2design}, the desired inequality becomes:
\begin{equation}
\label{eq:random2design_2}
\sum_{k \in \frac{1}{N} \ADD / \AD^\star} \sum_{s \in S} w_s(k) |s \cdot \hPP(k)|^2 \geq \frac{1}{4} r^2 \sum_{k \in \frac{1}{N} \ADD / \AD^\star} \left(\sum_{s \in S} w_s(k)\right) |\hPP(k)|^2.
\end{equation}
Then \eqref{eq:random2design_2}, and thus \eqref{eq:random2design}, follows from applying Lemma \ref{lem:geomwk} for each frequency $k$.
\end{proof}
The constant $\frac{1}{4}$ appears to be asymptotically sharp as $N \to \infty$.
\begin{remark}
\label{rem:squarelattice}
A square is also a spherical $2$-design. However, the square lattice $\Z^2$ - whoses shells are squares - does not satisfy \eqref{2design_want} for any $\cc > 0$.
\end{remark}

\subsection{An auxiliary minimality lemma}
\label{sec:the_minimality_lemma}
Let $v$ be a fixed unit vector in $\R^2$. For all $\alpha > 0$, we consider the function $\Psia : \R^2 \to [0, +\infty)$ defined as:
\begin{equation}
\label{def:Psia}
\Psiav : u \mapsto \sum_{x \in \AD} |(x+u) \cdot v|^2 e^{- \frac{\pi}{\alpha} |x+u|^2}
\end{equation}
This function is clearly $\AD$-periodic, so it is enough to study its restriction to the fundamental hexagon $\HH$. 

\begin{proposition}
\label{prop:minimality}
Let $\alphaL > 0$ be fixed. There exists $\cc > 0$ such that for all $\alpha \in (0, \alphaL)$ and for all $u \in \HH$:
\begin{equation}
\label{eq:PsiaPsia0_statement}
\Psiav(u) - \Psiav(0) \geq \cc |u|^2 e^{- \frac{\pi}{\alpha} \xstar^2}.
\end{equation}
\end{proposition}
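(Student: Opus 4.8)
The plan is to expand $\Psiav(u)$ around $u = 0$ and show that the quadratic term dominates, with a coefficient comparable to $e^{-\frac{\pi}{\alpha}\xstar^2}$. Write $\Psiav(u) = \sum_{x \in \AD} F_x(u)$ where $F_x(u) := |(x+u)\cdot v|^2 e^{-\frac{\pi}{\alpha}|x+u|^2}$. First I would isolate the contribution of the first shell $S$ of six minimal vectors (those at distance $\xstar$), since these carry the largest Gaussian weight $e^{-\frac{\pi}{\alpha}\xstar^2}$, and treat the rest of the lattice $\AD \setminus (S \cup \{0\})$ as a perturbation whose Gaussian weights are $e^{-\frac{\pi}{\alpha}|x|^2}$ with $|x|^2 \geq r_2^2 > \xstar^2$ — exponentially smaller as $\alpha \to 0$. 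The term $x = 0$ contributes $|u\cdot v|^2$, which is nonnegative, so it can only help (at worst we discard it).

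For the first-shell contribution, I would Taylor-expand $F_x(u) - F_x(0)$ to second order in $u$. The zeroth order cancels. The first-order (linear in $u$) terms, summed over the six vertices $s \in S$, cancel by the symmetry $s \mapsto -s$ of the shell (each $F_s$ is not even in $u$, but $F_s(u) + F_{-s}(u)$ has vanishing gradient at $u=0$, since $\nabla F_{-s}(0) = -\nabla F_s(0)$). This is exactly why one needs the full shell and not individual vectors. The second-order term is a quadratic form in $u$: schematically $\sum_{s \in S} \langle u, \Hess F_s(0)\, u\rangle$, and I would compute $\Hess F_s(0)$ explicitly. It has the shape $e^{-\frac{\pi}{\alpha}\xstar^2}$ times a matrix built from $v$, $s$, $s\cdot v$ and $\frac{\pi}{\alpha}$; the dangerous feature is that factors of $\frac{1}{\alpha}$ appear and blow up as $\alpha \to 0$. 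The key point is that the leading term in $\frac{1}{\alpha}$, coming from differentiating the exponential twice, is $\sum_{s\in S}(s\cdot v)^2 \big(\tfrac{2\pi}{\alpha}\big)^2 (s\cdot u)^2 e^{-\frac{\pi}{\alpha}\xstar^2}$ up to lower order; this is manifestly nonnegative and, using the $2$-design property of the hexagon (Remark \ref{rem:5design}, applied to the degree-$4$ polynomial $s \mapsto (s\cdot v)^2(s\cdot u)^2$), equals a positive constant times $\big(\tfrac{\pi}{\alpha}\big)^2 |u|^2 r^2 |v|^2 e^{-\frac{\pi}{\alpha}\xstar^2}$ plus a term with $|u\cdot v|^2$. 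So the most singular contribution is positive and of order $\frac{1}{\alpha^2}$, while every other term in the first-shell Hessian is at most of order $\frac{1}{\alpha}$ times $e^{-\frac{\pi}{\alpha}\xstar^2}$; hence for $\alpha$ small the quadratic form is positive definite with lower bound $\gtrsim |u|^2 e^{-\frac{\pi}{\alpha}\xstar^2}$. For $\alpha$ in a compact subinterval away from $0$ one falls back on a compactness/continuity argument: $u \mapsto \Psiav(u)$ is smooth, $u = 0$ is a critical point (by the shell symmetry, now applied to the whole lattice), and one checks the Hessian at $0$ is positive definite for each such $\alpha$, uniformly by continuity in $(\alpha, v)$ on a compact set. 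Together with a uniform bound on third derivatives controlling the Taylor remainder on the bounded set $\HH$, this yields \eqref{eq:PsiaPsia0_statement} with a uniform $\cc$.

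The main obstacle I anticipate is the bookkeeping of the $\alpha$-dependence: one must track which powers of $\frac{1}{\alpha}$ multiply $e^{-\frac{\pi}{\alpha}\xstar^2}$ versus $e^{-\frac{\pi}{\alpha}r_2^2}$, and verify that the negative contributions (from the cross terms in $\Hess F_s(0)$, which can have either sign, and from the tail $\AD\setminus(S\cup\{0\})$) are genuinely dominated. The tail is easy — each term is $O(\mathrm{poly}(\tfrac1\alpha)\, e^{-\frac{\pi}{\alpha}|x|^2})$ and summable, and $e^{-\frac{\pi}{\alpha}r_2^2}/e^{-\frac{\pi}{\alpha}\xstar^2} \to 0$ faster than any power of $\alpha$ — so the real work is the first-shell Hessian. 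A clean way to organize it is: write $\Psiav(u) - \Psiav(0) = e^{-\frac{\pi}{\alpha}\xstar^2}\big(\tfrac{1}{2}\langle u, H_\alpha u\rangle + R_\alpha(u)\big)$, show $H_\alpha \succeq c\,\id$ for $\alpha$ small using the $2$-design identity for the dominant term and crude bounds for the rest, and control $R_\alpha(u) = O(|u|^3)$ times a bounded-in-$\alpha$ constant on $\HH$, absorbing it into the quadratic term after possibly shrinking $\HH$ — but since $\HH$ is fixed and bounded and $|u| \le C$, one instead keeps $R_\alpha$ and notes $\Psiav(u) - \Psiav(0) \ge 0$ is what's needed only when $H_\alpha$ beats the remainder, which holds once $|u|$ is small; for $|u|$ bounded away from $0$ in $\HH$ one uses positivity of $\Psiav(u) - \Psiav(0)$ directly from a separate (compactness) argument. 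I would unify these two regimes at the end.
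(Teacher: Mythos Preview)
Your Taylor-expansion strategy handles only a shrinking neighborhood of $0$ as $\alpha \to 0$, and the ``compactness'' fallback you propose for the rest of $\HH$ does not work. After normalizing by $e^{-\frac{\pi}{\alpha}\xstar^2}$, the first-shell terms $F_s(u)/e^{-\frac{\pi}{\alpha}\xstar^2} = |(s+u)\cdot v|^2 e^{-\frac{2\pi}{\alpha}s\cdot u - \frac{\pi}{\alpha}|u|^2}$ have $k$-th derivatives of order $(\tfrac{1}{\alpha})^k$, so your remainder is $R_\alpha(u) = O\!\big((\tfrac{1}{\alpha})^3|u|^3\big)$, not $O(|u|^3)$ with a bounded-in-$\alpha$ constant. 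Since the Hessian is $O\!\big((\tfrac{1}{\alpha})^2\big)$, the quadratic term dominates only when $|u| \lesssim \alpha$. The paper runs the Taylor argument precisely on this regime (Lemma~\ref{lem:pialphauverysmall}, condition $\tfrac{\pi}{\alpha}|u|\le 2.73$). For the complementary regime $\tfrac{\pi}{\alpha}|u|\ge 2.73$ --- which, as $\alpha\to 0$, covers essentially all of $\HH\setminus\{0\}$ --- no compactness argument in $(u,\alpha,v)$ is available, because $\alpha$ ranges over $(0,\alphac)$ and both $\Psiav(u)$ and $\Psiav(0)$ tend to $0$; one must extract the correct rate $e^{-\frac{\pi}{\alpha}\xstar^2}$ by hand. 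The paper states explicitly that $\Psiav$ is not convex and that ``direct or perturbative computations fail'' here.

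The missing idea is the paper's ``good vertex'' argument (Lemma~\ref{lem:uvsmallularge}): one singles out a vertex $s_5$ of the first shell which is roughly opposite to $u$ but not aligned with it. Being opposite makes $e^{-\frac{2\pi}{\alpha}s_5\cdot u}$ exponentially large, and not being aligned with $u$ (hence not orthogonal to $v$, since in the remaining case $|u\cdot v|$ is small) keeps the prefactor $|(s_5+u)\cdot v|^2$ bounded below. One then verifies, via an explicit worst-case analysis over $(u,\alpha,v)$, that this single term already exceeds $\Psisas(0)=3\xstar^2 e^{-\frac{\pi}{\alpha}\xstar^2}$. There is also a separate sub-case where $|u\cdot v|$ is ``large'' and the origin term $|u\cdot v|^2 e^{-\frac{\pi}{\alpha}|u|^2}$ alone suffices, and a further treatment of higher shells. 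None of this is captured by a Hessian-plus-compactness scheme.
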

The proof of Proposition \ref{prop:minimality} relies on a cumbersome analysis given in Section~\ref{sec:proof_minimality}, including a handful of purely numerical inequalities. We emphasize that $\Psiav$ is \emph{not} convex. The value of $\alphaL$ is irrelevant at this stage and will be chosen later, our argument works for all fixed $\alphaL$ and becomes more difficult for \emph{small} $\alpha$'s.
\begin{remark}
In the sequel, we will encounter the function:
\begin{equation*}
\omega \mapsto \sum_{k \in \ADD} |(k+\omega) \cdot v|^2 e^{- \frac{\pi}{\alpha} |k+\omega|^2}
\end{equation*}
where $\omega$ lives in $\Omega$, the Pontryagin dual of $\AD$. Since the reciprocal lattice $\ADD$ is obtained from $\AD$ by a rotation and $\Omega$ is obtained from $\HH$ by the same rotation, the conclusions of Proposition \ref{prop:minimality} apply readily.
\end{remark}

\clearpage 
\section{Proof of Theorem \ref{theo:Local1}}
\label{sec:proof_of_theorem_ref_theo_local1}
In this section, we denote by $\Ss$ the first shell of $\AD$, of sidelength $\xstar$ (see Figure \ref{figH}), and by $\FS(\PP)$ the “First-Shell” size of a perturbation $\PP$, defined as (with $\Px$ as in \eqref{def:Px}, see also \eqref{eq:QxProbabilistic}):
\begin{equation}
\label{def:FS}
\FS(\PP) := \sum_{x \in \Ss} \int_{\R^2} |u|^2\dd \Px(u) 
\end{equation}
We also define, with $\Cs$ the matrix-valued spectral measure of $\PP$ as introduced in Section \ref{sec:Spectral}:
\begin{equation}
\label{def:SM}
\SM(\PP) := \Tr\left(\int_{\Omega} |w|^2 \dd \Cs(\omega)\right).
\end{equation}
By \eqref{G0Gxrho}, if $\SM(\PP) = 0$ then  $\FS(\PP) = 0$. Moreover if $\FS(\PP) = 0$, then clearly $\PP$ is constant. We prove the following quantitative version of Theorem \ref{theo:Local1}: 
\begin{proposition}
\label{prop:Gaussian_Quantitative}
There exists a threshold $\alphaL > 0$ and a constant $\cc > 0$ such that the following holds.
Let $\PP$ be a periodic perturbation.
\begin{itemize}
  \item For $\alpha \geq \alphaL$, if $\|\PP\| \leq \alpha^{-1}$, then:
  \begin{equation}
\label{QuantitativeAlphaLarge}
\Ea(\AD + \PP) - \Ea(\AD) \geq \cc e^{-\pi \alpha \xstar^2} \times \FS(\PP).
\end{equation}
\item For $0 < \alpha \leq \alphaL$, if $\|\PP\| \leq \cc e^{- \frac{2\pi}{\alpha} \xstar^2}$, then:
  \begin{equation}
\label{QuantitativeAlphaSmall}
\Ea(\AD + \PP) - \Ea(\AD) \geq \cc e^{-\frac{\pi}{\alpha} \xstar^2} \times \SM(\PP).
\end{equation}
\end{itemize}
\end{proposition}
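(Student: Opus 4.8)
I would begin from \eqref{eq:Ef_Periodic}: since each $\Px$ is centered (Remark \ref{rem:alwayscentered}) and $\Ga(x+u) = \Pa(|x+u|)$,
\[
\Ea(\AD + \PP) - \Ea(\AD) = \sum_{x \in \AD \setminus \{0\}} \int_{\R^2} \big( \Ga(x+u) - \Ga(x) - u \cdot \nabla \Ga(x) \big)\, \dd \Px(u).
\]
From here the two regimes genuinely require different arguments: for $\alpha$ large the interaction is short-ranged and everything is controlled by the first shell, whereas for $\alpha$ small it is long-ranged and one must pass to the spectral measure of $\PP$.

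\emph{The regime $\alpha \ge \alphaL$.} Split the sum into the first shell $\Ss$ and the rest. For $x \in \Ss$ one has $|x| = \xstar$, so the integrand equals $e^{-\pi\alpha\xstar^2}\phi_x(u)$ with $\phi_x(u) := e^{-2\pi\alpha x\cdot u - \pi\alpha|u|^2} - 1 + 2\pi\alpha x\cdot u$. The key step — and the reason $\|\PP\|\le\alpha^{-1}$ is precisely the right hypothesis — is the \emph{non-Taylor}, convexity-type inequality $\phi_x(u) \ge 4\pi^2\epsilon\,\alpha^2 (x\cdot u)^2 - C\alpha|u|^2$ valid for $|u| \le 2\alpha^{-1}$, which follows from $e^{-q}\ge 1-q$ and from $e^{-p} \ge 1 - p + \epsilon p^2$ (the latter holding on $(-\infty,\tfrac1{2\epsilon}]$, hence on $(-\infty, 4\pi\xstar]$ once $\epsilon$ is a small absolute constant), applied with $p = 2\pi\alpha x\cdot u$ and $q = \pi\alpha|u|^2$. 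Summing over $\Ss$ and invoking Proposition \ref{prop:Random2design} in the form $\sum_{s\in\Ss}\int(s\cdot u)^2\,\dd\Qq_s \ge \tfrac14\xstar^2\,\FS(\PP)$ yields a first-shell contribution $\ge e^{-\pi\alpha\xstar^2}(\pi^2\epsilon\xstar^2\alpha^2 - C\alpha)\,\FS(\PP)$, which is $\gtrsim \alpha^2 e^{-\pi\alpha\xstar^2}\FS(\PP)$ for $\alpha$ large. The remaining points, at distance $\ge\sqrt 3\,\xstar$, are handled by a crude second-order Taylor estimate together with Lemma \ref{lem:RRORRxIntomegacarre} ($\int|u|^2\,\dd\Px \le 8\pi^2|x|^2\SM(\PP)$) and the elementary equivalence $\FS(\PP)\asymp\SM(\PP)$; their total is $O(\alpha^2 e^{-\frac{12}{5}\pi\alpha\xstar^2}\FS(\PP))$, negligible once $\alphaL$ is large. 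This gives \eqref{QuantitativeAlphaLarge}.

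\emph{The regime $\alpha \le \alphaL$.} Here I would take as main term the quadratic-in-$\PP$ part $Q := \sum_{x\ne 0}\tfrac12\int u^T \Hess\Ga(x)\, u\,\dd\Px(u)$. Since $\int u\,u^{T}\,\dd\Px = \CC_x = 2(\RR_0 - \RR_x)$ and $\RR_0 - \RR_x = \int_\Omega(1-\cos 2\pi x\cdot\omega)\,\dd\Cs(\omega)$ by \eqref{GasTFrho}, one gets $Q = \int_\Omega \langle B(\omega), \dd\Cs(\omega)\rangle$ with $B(\omega) := \sum_{x\ne 0}(1-\cos 2\pi x\cdot\omega)\,\Hess\Ga(x)$ and $\langle\cdot,\cdot\rangle$ the entrywise inner product on symmetric matrices. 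Writing $\Hess\Ga(x)$ as the Hessian in $y$ of $e^{-\pi\alpha|x+y|^2}$ at $y=0$, summing over $\AD$, applying Poisson summation, and differentiating twice turns $B(\omega)$ into a sum over $\ADD$ with $v^T B(\omega)\,v = 4\pi^2\alpha^{-1}\big(\widetilde\Psi_{\alpha,v}(\omega) - \widetilde\Psi_{\alpha,v}(0)\big)$, where $\widetilde\Psi_{\alpha,v}(\omega) := \sum_{k\in\ADD}|(k+\omega)\cdot v|^2 e^{-\frac\pi\alpha|k+\omega|^2}$ is the (even) function of the Remark after Proposition \ref{prop:minimality}. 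That proposition, applied on the reciprocal lattice, gives $v^T B(\omega)\,v \ge 4\pi^2\alpha^{-1}\cc\,|\omega|^2 e^{-\frac\pi\alpha\xstar^2}$ for every unit vector $v$, i.e. $B(\omega) \succeq 4\pi^2\alpha^{-1}\cc\,|\omega|^2 e^{-\frac\pi\alpha\xstar^2}\,\id$; since $\Cs$ is $\STP$-valued and $\int_\Omega|\omega|^2\,\dd\tCs = \SM(\PP)$ (by \eqref{def:SM}, \eqref{eq:tCs}), pairing gives $Q \ge 4\pi^2\alpha^{-1}\cc\,e^{-\frac\pi\alpha\xstar^2}\,\SM(\PP)$. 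It remains to bound the cubic-and-higher remainder $\Ea(\AD+\PP) - \Ea(\AD) - Q$: a Lagrange-remainder estimate using $|D^3\Ga(\xi)| \lesssim (\alpha^2|\xi| + \alpha^3|\xi|^3)e^{-\pi\alpha|\xi|^2}$, $\tfrac9{10}|x| \le |\xi| \le \tfrac{11}{10}|x|$ (from $\|\PP\|\le\tfrac1{20}\xstar$), $|u| \le 2\|\PP\|$, and Lemma \ref{lem:RRORRxIntomegacarre} bounds it by $\lesssim \|\PP\|\,\SM(\PP)\sum_{x\ne 0}(\alpha^2|x|^3 + \alpha^3|x|^5)e^{-\frac45\pi\alpha|x|^2} \lesssim \alpha^{-1/2}\|\PP\|\,\SM(\PP)$, and the hypothesis $\|\PP\|\le\cc\,e^{-\frac{2\pi}\alpha\xstar^2}$ makes this $\ll \alpha^{-1}e^{-\frac\pi\alpha\xstar^2}\SM(\PP)$ for all $\alpha\le\alphaL$ once $\cc$ is chosen small. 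This gives \eqref{QuantitativeAlphaSmall}.

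The hard part, granting the three preliminary results, is controlling the \emph{non-quadratic} part of the Gaussian interaction. For small $\alpha$ this is tamed by the exponentially strong smallness of $\|\PP\|$, but the long-range structure must be kept intact, which is why one cannot argue shell by shell and must instead route the computation through the spectral measure and Proposition \ref{prop:minimality}. For large $\alpha$ a naïve Taylor expansion around the lattice point is hopeless: at distance $\xstar$ the Gaussian varies on the exponential scale $(2\pi\alpha\xstar)^{-1}$, which is comparable to $\|\PP\|$, so the relevant expansion parameter is $O(1)$ rather than small; the remedy is precisely the genuine inequality for $\phi_x$, valid over the entire admissible range of $u$. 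Along the way one should also verify two elementary facts — $\FS(\PP)\asymp\SM(\PP)$ and $\sum_{s\in\Ss}(1-\cos 2\pi s\cdot\omega)\gtrsim|\omega|^2$ on $\Omega$ — which follow from the $2$-design property, compactness of $\Omega$, and the fact that the first shell generates $\AD$.
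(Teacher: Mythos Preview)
Your proof is correct and follows essentially the same architecture as the paper's: the large-$\alpha$ regime via the first-shell contribution and Proposition~\ref{prop:Random2design}, the small-$\alpha$ regime via the spectral measure, Poisson summation, and Proposition~\ref{prop:minimality}. Two points of comparison are worth recording.

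\emph{Large $\alpha$.} Your direct inequality $\phi_x(u)\ge 4\pi^2\epsilon\,\alpha^2(x\cdot u)^2 - C\alpha|u|^2$ is equivalent in effect to the paper's route, which keeps the exact integral-remainder form $\int_0^1(1-t)\nabla^2\Ga(x+tu)(u,u)\,\dd t$ and then expands the Hessian carefully (their Step~1); both extract the directional term $(x\cdot u)^2$ needed for the periodic $2$-design inequality, so your remark that ``a Taylor expansion is hopeless'' applies only to the \emph{crude} bound by $\|\nabla^2\Ga\|_\infty$, not to the paper's treatment. For the higher shells you use Lemma~\ref{lem:RRORRxIntomegacarre} together with the equivalence $\FS(\PP)\asymp\SM(\PP)$; the paper instead proves an elementary path-based bound $\int|u|^2\,\dd\Px\le(|x|/\xstar)^2\FS(\PP)$ (their Claim~3.4), avoiding any reference to the spectral measure in this regime. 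Both give the same $|x|^2$ growth and the same conclusion.

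\emph{Small $\alpha$.} Here your argument is slightly more streamlined than the paper's. The paper splits the lattice into ``low'' layers $|x|\le R$ and ``high'' layers $|x|>R$, bounds the cubic remainder on the low part by the uniform constant $\|\nabla^3\Ga\|_\infty=C\alpha^{3/2}$ (which is why the truncation is needed, as the resulting sum $\sum_{|x|\le R}|x|^2$ diverges with $R$), re-introduces the high layers into the quadratic part, and finally optimizes over $R\sim M/\alpha$. Your use of the \emph{pointwise} decaying bound $|D^3\Ga(\xi)|\lesssim(\alpha^2|\xi|+\alpha^3|\xi|^3)e^{-\pi\alpha|\xi|^2}$ makes the remainder sum converge directly and yields the $\alpha^{-1/2}\|\PP\|\,\SM(\PP)$ estimate in one step, with no truncation parameter. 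This is a genuine simplification, at the cost of tracking $D^3\Ga$ explicitly.
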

We divide the proof of Proposition \ref{prop:Gaussian_Quantitative} in two parts: “$\alpha$ large” and “$\alpha$ small”. The first case is easier, we simply use the periodic $2$-design inequality of Section \ref{sec:a_random_2_design_property} to get a lower bound on the Hessian of the interaction energy. The second case is more subtle: it requires a distinction between “low” and “high” layers within $\AD$, and the introduction of the spectral measure $\Cs$ after having moved to Fourier side thanks to Poisson's summation formula. The rest of this section is devoted to proving the following two lemmas.
\begin{lemma}[Large $\alpha$]
\label{lem:largealpha}
There exists a threshold $\alphaL \geq 1$, a small constant $\cc$ and a large constant $\Cc$ such that the following holds for all $\alpha \geq \alphaL$ and all periodic perturbations $\PP$:
\begin{itemize}
  \item If $\|\PP\| \leq \frac{1}{\alpha}$, we have:
\begin{equation}
\label{quantitativealphaL1}
\Ea(\AD + \PP) - \Ea(\AD) \geq \cc e^{-\pi \alpha \xstar^2} \times \FS(\PP).
\end{equation}
\item Under our general assumption $\|\PP\| \leq \frac{\xstar}{20}$, see \eqref{eq:pp100}, we have:
\begin{equation}
\label{quantitativealphaL2}
\Ea(\AD + \PP) - \Ea(\AD) \geq - \Cc e^{-\frac{\pi \alpha}{2} \xstar^2} \times  \FS(\PP).
\end{equation}
\end{itemize}
\end{lemma}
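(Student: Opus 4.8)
The plan is to start from the exact expression \eqref{eq:Ef_Periodic} for $\Ea(\AD+\PP)$, which gives
\[
\Ea(\AD + \PP) - \Ea(\AD) = \sum_{x \in \AD,\, x \neq 0} \int_{\R^2} \bigl( \Pa(|x+u|) - \Pa(|x|) \bigr)\, \dd \Px(u),
\]
and to exploit the identity $\Pa(|x+u|) = e^{-\pi\alpha|x|^2}\, e^{-\pi\alpha t_x(u)}$ with $t_x(u) := 2x\cdot u + |u|^2$. Since $\Px$ is centered (Remark \ref{rem:alwayscentered}), $\int t_x \,\dd\Px = \int |u|^2 \,\dd\Px =: m_x$, and $\FS(\PP) = \sum_{s \in \Ss} m_s$. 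The argument will rest on two elementary one-variable facts: $e^{-s} \geq 1 - s$ for all $s \in \R$, and, for each fixed $C > 0$, the existence of $c_C > 0$ with $e^{-s} \geq 1 - s + c_C\, s^2$ on $[-C, C]$. I would also first record a ``path estimate'': passing to the stationary realization \eqref{eq:bPP}, using \eqref{eq:QxProbabilistic}, writing any $x \in \AD$ as a sum of at most $2|x|/\xstar$ first-shell vectors and applying stationarity and the triangle inequality in $L^2$, one gets $m_x \leq 4\,\xstar^{-2}\,|x|^2\, \FS(\PP)$ for every periodic $\PP$; this is what lets distant shells be controlled by $\FS(\PP)$ rather than just by $\|\PP\|$.

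For \eqref{quantitativealphaL2} only $e^{-s} \geq 1 - s$ is needed: it gives $\int(e^{-\pi\alpha t_x} - 1)\,\dd\Px \geq -\pi\alpha\, m_x$, so the $x$-term is at least $-\pi\alpha\, e^{-\pi\alpha|x|^2}\, m_x \geq -4\pi\,\xstar^{-2}\,\alpha\, |x|^2\, e^{-\pi\alpha|x|^2}\, \FS(\PP)$. Summing over $x \neq 0$, pulling out a factor $e^{-\pi\alpha\xstar^2}$ (using $|x|^2 \geq \xstar^2$, so the residual Gaussian sum is bounded for $\alpha \geq \alphaL \geq 1$), and then absorbing the stray $\alpha$ via $\alpha\, e^{-\pi\alpha\xstar^2} \leq \bigl(\sup_{t > 0} t\, e^{-\frac{\pi}{2}\xstar^2 t}\bigr)\, e^{-\frac{\pi\alpha}{2}\xstar^2}$, produces \eqref{quantitativealphaL2} with a suitable large $\Cc$. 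This step uses only the standing assumption \eqref{eq:pp100} (entering through \eqref{eq:Ef_Periodic}).

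For \eqref{quantitativealphaL1} the hypothesis $\|\PP\| \leq 1/\alpha$ is what drives the argument: it forces $|u| \leq 2/\alpha$ for $\Px$-a.e.\ $u$, hence $\pi\alpha\,|t_s(u)| \leq C_0$ for a \emph{fixed} constant $C_0$ whenever $s$ lies in the first shell $\Ss$. I would then apply $e^{-\pi\alpha t_s} \geq 1 - \pi\alpha t_s + c_{C_0}(\pi\alpha t_s)^2$, integrate against $\Qq_s$ and sum over $s \in \Ss$, so that the first-shell contribution is at least $e^{-\pi\alpha\xstar^2}\bigl(-\pi\alpha\, \FS(\PP) + c_{C_0}\,\pi^2\alpha^2 \sum_{s \in \Ss}\int t_s^2\,\dd\Qq_s\bigr)$. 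Writing $t_s^2 = 4(s\cdot u)^2 + 4(s\cdot u)|u|^2 + |u|^4$ and using $|u| \leq 2/\alpha$ together with the periodic $2$-design inequality of Proposition \ref{prop:Random2design} (in the form \eqref{2design_want} with $\cc = \tfrac14\xstar^2$), one gets $\sum_{s\in\Ss}\int t_s^2\,\dd\Qq_s \geq (\xstar^2 - O(1/\alpha))\,\FS(\PP) \geq \tfrac12\,\xstar^2\, \FS(\PP)$ for $\alpha$ large, so the first shell contributes at least of order $\alpha^2\, e^{-\pi\alpha\xstar^2}\, \FS(\PP)$. For the remaining shells ($|x| \geq \sqrt{3}\,\xstar$, the next length occurring in $\AD$) I would reuse the crude bound of the previous paragraph: their combined contribution is at least $-C_3\,\alpha\, e^{-3\pi\alpha\xstar^2}\, \FS(\PP)$, negligible against the first-shell term once $\alpha$ is large. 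Adding the two and choosing $\alphaL$ large (in particular $\alphaL \geq 20/\xstar$, so that $\|\PP\| \leq 1/\alpha$ forces \eqref{eq:pp100}) gives \eqref{quantitativealphaL1} with a small $\cc > 0$.

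The step I expect to be the real obstacle is the first-shell estimate for large $\alpha$. A naive second-order Taylor expansion of $x \mapsto e^{-\pi\alpha|x|^2}$ around each lattice point is useless: the third-order remainder, controlled by the supremum of the third derivative over the segment $[x, x+u]$, can be exponentially larger than the quadratic main term whenever $u$ has a component toward the origin, since the Gaussian and its derivatives are enormous at intermediate points closer to $0$. The fix is structural --- factor out the \emph{exact} value $e^{-\pi\alpha|x|^2}$ and expand the bounded quantity $e^{-\pi\alpha t_x(u)}$ --- and the role of the hypothesis $\|\PP\| \leq 1/\alpha$ is precisely to keep $\pi\alpha\, t_x(u)$ bounded uniformly in $\alpha$, so that the quadratic coefficient $c_{C_0}$ stays a positive constant not shrinking with $\alpha$. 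A secondary bookkeeping point is the path estimate, needed so that the far shells enter with $\FS(\PP)$. One should also not be surprised that the resulting $\cc$ is extremely small and $\alphaL$ extremely large, in line with the remarks in the introduction.
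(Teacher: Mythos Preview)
Your proof is correct and structurally identical to the paper's: both isolate the first shell, invoke the periodic $2$-design inequality (Proposition~\ref{prop:Random2design}) to extract a positive quadratic term of size $\alpha^2 e^{-\pi\alpha\xstar^2}\,\FS(\PP)$, and control all higher shells via the same path estimate $m_x \leq C(|x|/\xstar)^2\,\FS(\PP)$ (your Claim~\ref{claim:HigherComparedToSS} analogue).

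The one genuine technical difference is the expansion. The paper uses the \emph{integral} form of Taylor's theorem, writing $\Ga(x+u)-\Ga(x)=\nabla\Ga(x)\cdot u+\int_0^1(1-t)\nabla^2\Ga(x+tu)(u,u)\,\dd t$ and then bounding the Hessian at the intermediate point $x+tu$ (which, under $|u|\le 2/\alpha$, costs only a harmless factor $e^{\Cc|x|}$). You instead factor out $e^{-\pi\alpha|x|^2}$ exactly and use the convexity inequalities $e^{-s}\ge 1-s$ and $e^{-s}\ge 1-s+c_{C_0}s^2$ on the bounded variable $s=\pi\alpha\,t_x(u)$. Your route is slightly more elementary and has a concrete payoff for \eqref{quantitativealphaL2}: since $e^{-s}\ge 1-s$ holds globally with no remainder, you never pick up the factor $e^{4\pi\alpha\|\PP\|\,|x|}$ that forces the paper to weaken $e^{-\pi\alpha\xstar^2}$ to $e^{-\frac{\pi\alpha}{2}\xstar^2}$; your argument in fact gives the stronger bound $-\Cc\,\alpha\,e^{-\pi\alpha\xstar^2}\,\FS(\PP)$ before you voluntarily relax it. One small correction: your closing warning that a Taylor expansion with remainder is ``useless'' is overstated. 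Under $|u|\le 2/\alpha$ the intermediate point $x+tu$ stays within $2/\alpha$ of $x$, so all derivatives of $\Ga$ there differ from their values at $x$ only by factors $e^{O(|x|)}$, independent of $\alpha$; this is precisely what the paper exploits, and it works.
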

\begin{lemma}[Small $\alpha$]
\label{lem:smallalpha}
With $\alphaL$ as above, there exists a small constant $\cc$ and a large constant $\Cc$ such that the following holds for all $0 < \alpha \leq \alphaL$ and all periodic perturbations $\PP$:
\begin{itemize}
  \item If $\|\PP\| \leq \cc e^{- \frac{2\pi}{\alpha} \xstar^2}$, we have:
\begin{equation}
\label{LowerboundAlphaSmall}
\Ea(\AD + \PP) - \Ea(\AD) \geq \cc  e^{-\frac{\pi}{\alpha} \xstar^2} \times \SM(\PP). 
\end{equation}
\item Under our general assumption $\|\PP\| \leq \frac{\xstar}{20}$, see \eqref{eq:pp100}, we have:
\begin{equation}
\label{quantitativealphaSMALL_2}
\Ea(\AD + \PP) - \Ea(\AD) \geq - \Cc \alpha^{-1} \times \SM(\PP).
\end{equation}
\end{itemize}
\end{lemma}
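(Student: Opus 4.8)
The plan is to expand the Gaussian energy to second order around $\AD$, to identify the quadratic term --- after passing to the reciprocal lattice by Poisson summation --- as an average against the spectral measure $\Cs$ of a lattice sum governed by Proposition~\ref{prop:minimality}, and to absorb the cubic Taylor remainder using Lemma~\ref{lem:RRORRxIntomegacarre}. Starting from \eqref{eq:Ef_Periodic}, write $\Ea(\AD+\PP)-\Ea(\AD)=\sum_{x\in\AD,\,x\neq0}\int_{\R^{2}}\big(\Ga(x+u)-\Ga(x)\big)\,\dd\Px(u)$ and insert $\Ga(x+u)=\Ga(x)+\nabla\Ga(x)\cdot u+\tfrac12 u^{T}\Hess\Ga(x)\,u+R_{x}(u)$, with $|R_{x}(u)|\le\tfrac16|u|^{3}\sup_{t\in[0,1]}\|D^{3}\Ga(x+tu)\|$. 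The linear term drops since $\int u\,\dd\Px(u)=0$ (Remark~\ref{rem:alwayscentered}), so $\Ea(\AD+\PP)-\Ea(\AD)=\Main+\Error$ with $\Main:=\tfrac12\sum_{x\neq0}\int u^{T}\Hess\Ga(x)\,u\,\dd\Px(u)$ and $\Error:=\sum_{x\neq0}\int R_{x}(u)\,\dd\Px(u)$. Using $\Hess\Ga(x)=\Ga(x)\big(4\pi^{2}\alpha^{2}xx^{T}-2\pi\alpha\,\id\big)$ together with $\int(x\cdot u)^{2}\dd\Px=x^{T}\CC_{x}x$, $\int|u|^{2}\dd\Px=\Tr\CC_{x}$, the identity $\CC_{x}=2(\RR_{0}-\RR_{x})=2\int_{\Omega}(1-\cos(2\pi x\cdot\omega))\,\dd\Cs(\omega)$ from \eqref{eq:G0Gx} and \eqref{GasTFrho}, and $\Tr\Cst(\omega)=1$, one obtains by Fubini and diagonalisation of $\Cst$ (Section~\ref{sec:Spectral})
\[
\Main=\int_{\Omega}\Big(\lambda_{1}(\omega)\,A_{v_{1}(\omega)}(\omega)+\lambda_{2}(\omega)\,A_{v_{2}(\omega)}(\omega)\Big)\,\dd\tCs(\omega),\quad A_{v}(\omega):=\sum_{x\in\AD,\,x\neq0}\big(4\pi^{2}\alpha^{2}(x\cdot v)^{2}-2\pi\alpha\big)\Ga(x)\big(1-\cos(2\pi x\cdot\omega)\big).
\]

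The heart of the matter is to recognise $A_{v}(\omega)$. The summand $x\mapsto(4\pi^{2}\alpha^{2}(x\cdot v)^{2}-2\pi\alpha)\Ga(x)$ is $v^{T}(\Hess\Ga)v=(v\cdot\nabla)^{2}\Ga$, whose Fourier transform is $k\mapsto-4\pi^{2}(k\cdot v)^{2}\widehat{\Ga}(k)=-4\pi^{2}\alpha^{-1}(k\cdot v)^{2}e^{-\frac{\pi}{\alpha}|k|^{2}}$ by \eqref{eq:hPa}. Since the $x=0$ term in $A_{v}$ vanishes, writing $1-\cos(2\pi x\cdot\omega)=1-\tfrac12 e^{2i\pi x\cdot\omega}-\tfrac12 e^{-2i\pi x\cdot\omega}$ and applying Poisson summation to $(v\cdot\nabla)^{2}\Ga$ and to its modulations by $e^{\pm2i\pi x\cdot\omega}$ (using that this Fourier transform is even) gives
\[
A_{v}(\omega)=4\pi^{2}\alpha^{-1}\big(\Theta_{v}(\omega)-\Theta_{v}(0)\big),\qquad \Theta_{v}(\omega):=\sum_{k\in\ADD}\big|(k+\omega)\cdot v\big|^{2}e^{-\frac{\pi}{\alpha}|k+\omega|^{2}},
\]
which is precisely the lattice sum of the remark following Proposition~\ref{prop:minimality}. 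Applying that proposition on the rotated lattice $\ADD$, with $\Omega$ in place of $\HH$: for the fixed $\alphaL$ there is $\cc>0$ with $\Theta_{v}(\omega)-\Theta_{v}(0)\ge\cc\,|\omega|^{2}e^{-\frac{\pi}{\alpha}\xstar^{2}}\ge0$ for all $\alpha\in(0,\alphaL]$, all unit $v$, and all $\omega\in\Omega$; hence $A_{v}(\omega)\ge4\pi^{2}\cc\,\alpha^{-1}|\omega|^{2}e^{-\frac{\pi}{\alpha}\xstar^{2}}\ge0$. Feeding this back and using $\lambda_{1}+\lambda_{2}=1$ together with $\SM(\PP)=\int_{\Omega}|\omega|^{2}\,\dd\tCs(\omega)$ (from \eqref{def:SM} and $\tCs=\Cs^{11}+\Cs^{22}$),
\[
\Main\ \ge\ 4\pi^{2}\cc\,\alpha^{-1}e^{-\frac{\pi}{\alpha}\xstar^{2}}\,\SM(\PP)\ \ge\ 0.
\]

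For the remainder, on $\supp\Px$ one has $|u|\le2\|\PP\|\le\tfrac1{10}\xstar$ by \eqref{eq:pp100}, so $|R_{x}(u)|\lesssim|u|^{3}\sup_{|h|\le\xstar/10}\|D^{3}\Ga(x+h)\|$ and, crucially, $\int|u|^{3}\dd\Px(u)\le2\|\PP\|\int|u|^{2}\dd\Px(u)\le16\pi^{2}\|\PP\|\,|x|^{2}\,\SM(\PP)$ by Lemma~\ref{lem:RRORRxIntomegacarre}; this is what forces the remainder to carry the same factor $\SM(\PP)$ as $\Main$ and hence makes the whole estimate independent of the period of $\PP$. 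With $\|D^{3}\Ga(y)\|\lesssim(\alpha^{2}|y|+\alpha^{3}|y|^{3})e^{-\pi\alpha|y|^{2}}$, bounding the $\sup$ by $e^{-\frac{4\pi\alpha}{5}|x|^{2}}$ (using $|x|\ge\xstar$) and comparing the lattice sum with a Gaussian integral gives $\sum_{x\neq0}|x|^{2}\sup_{|h|\le\xstar/10}\|D^{3}\Ga(x+h)\|\lesssim\alpha^{-1/2}$, whence $|\Error|\le C\|\PP\|\,\alpha^{-1/2}\,\SM(\PP)$ for a universal $C$ (one may, if desired, organise this tail by splitting $\AD$ into a ball of radius $\asymp\xstar/\alpha$ and its complement, but it is not needed). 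Both claims now follow: under \eqref{eq:pp100}, $\Ea(\AD+\PP)-\Ea(\AD)=\Main+\Error\ge-|\Error|\ge-C\tfrac{\xstar}{20}\alpha^{-1/2}\SM(\PP)\ge-\Cc\,\alpha^{-1}\SM(\PP)$ for $\Cc$ large (recall $\alpha\le\alphaL$), which is \eqref{quantitativealphaSMALL_2}; and if moreover $\|\PP\|\le\cc\,e^{-\frac{2\pi}{\alpha}\xstar^{2}}$, then, since $e^{-\frac{2\pi}{\alpha}\xstar^{2}}$ beats $\alpha^{-1/2}e^{-\frac{\pi}{\alpha}\xstar^{2}}$, after shrinking $\cc$ we have $|\Error|\le\tfrac12\Main$, hence $\Ea(\AD+\PP)-\Ea(\AD)\ge\tfrac12\Main\ge2\pi^{2}\cc\,e^{-\frac{\pi}{\alpha}\xstar^{2}}\SM(\PP)$, i.e.\ \eqref{LowerboundAlphaSmall} after relabelling.

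Apart from Proposition~\ref{prop:minimality}, which we quote, the only genuine difficulty is this error control: the cubic Taylor remainder must be shown proportional to $\SM(\PP)$ --- not merely to $\|\PP\|^{2}$ or to $\FS(\PP)$ --- so that it is dominated by $\Main$ with constants uniform in the period, and this is exactly why the admissible size of $\PP$ in \eqref{LowerboundAlphaSmall} must be super-exponentially small in $1/\alpha$: the Gaussian weight carried by $\Main$ is only of order $e^{-\frac{\pi}{\alpha}\xstar^{2}}$, whereas the relevant sum of third derivatives is polynomially large as $\alpha\to0$.
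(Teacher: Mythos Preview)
Your proof is correct and follows the same high-level strategy as the paper --- Taylor expand, pass to the reciprocal lattice via Poisson summation, invoke Proposition~\ref{prop:minimality}, and control the remainder using Lemma~\ref{lem:RRORRxIntomegacarre} --- but you organise the estimates more efficiently. The paper splits the lattice sum into ``Low'' layers $|x|\le R$ and ``High'' layers $|x|>R$: on the Low part it Taylor-expands using only the \emph{global} bound $\|\nabla^{3}\Ga\|_{\infty}=\Cc\alpha^{3/2}$, which forces a truncation and a later re-completion of the lattice sum (Steps~1--2), while the High part is handled by a crude lower bound (Step~5); one then tunes $R\asymp M/\alpha$ so that both errors fit under the main term (Steps~6--7). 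You instead keep the full lattice sum throughout, apply Poisson directly to the exact quadratic part, and bound the cubic Taylor remainder \emph{pointwise} in $x$ via $\|D^{3}\Ga(x+h)\|\lesssim(\alpha^{2}|x|+\alpha^{3}|x|^{3})e^{-c\pi\alpha|x|^{2}}$, so that after multiplying by $|x|^{2}$ (from Lemma~\ref{lem:RRORRxIntomegacarre}) and summing you get $|\Error|\lesssim\|\PP\|\alpha^{-1/2}\SM(\PP)$. This avoids the cutoff optimisation entirely and in fact yields a sharper error (the paper's Low error is $\lesssim\|\PP\|\alpha^{-4}\SM(\PP)$, cf.\ \eqref{transfertootherterm}), giving \eqref{quantitativealphaSMALL_2} with $\alpha^{-1/2}$ in place of $\alpha^{-1}$. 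The paper's layered decomposition is more modular, but your route is shorter and loses less.
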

The lower bounds \eqref{quantitativealphaL2} and \eqref{quantitativealphaSMALL_2} are not needed for Proposition \ref{prop:Gaussian_Quantitative} but they follow from the same analysis and will be useful later when considering mixtures of the $\Pa$'s in the proof of Theorem \ref{theo:Family}.

\vspace{0.2cm}

The starting point for both lemmas is \eqref{eq:Ef_Periodic}, namely the fact that, with $\Px$ as in \eqref{def:Px}:
\begin{equation}
\label{EaADPPEaAD}
\Ea(\AD + \PP) - \Ea(\AD) = \sum_{x \in \AD \setminus \{0\}} \int_{\R^2} \left(\Ga(x +u) - \Ga(x) \right) \dd \Px(u), 
\end{equation}
where $\Ga : \R^2 \to \R$ is the Gaussian interaction potential $\Ga(x) = \Pa(|x|) = e^{-\pi \alpha |x|^2}$.

\clearpage

\subsection{First case: \texorpdfstring{$\alpha$}{alpha} large}
Without loss of generality, we can take here $\alpha \geq 1$. We start with the proof of \eqref{quantitativealphaL1} and assume that $\PP$ is bounded by $\frac{1}{\alpha}$.

\paragraph{Step 1. Taylor expansion.}
In view of \eqref{EaADPPEaAD}, for fixed $x \in \AD$ and $u$ in $\R^2$, we use a second-order Taylor expansion:
\begin{equation*}
\Ga(x +u) - \Ga(x) = \nabla \Ga(x) \cdot u + \int_{0}^{1} (1-t) \nabla^2 \Ga(x + t u)(u, u) \dd t, 
\end{equation*}
and we integrate this identity against $\Px$. The first-order term vanishes thanks to Remark \ref{rem:alwayscentered}, and thus:
\begin{equation}
\label{TaylorIntegral}
\int_{\R^2} \left(\Ga(x +u) -  \Ga(x)\right) \dd \Px(u) = \int_{0}^{1} (1-t) \left(\int_{\R^2}  \nabla^2 \Ga(x + tu)(u, u)  \dd \Px(u) \right) \dd t.
\end{equation}
For any $z, u$ in $\R^2$, a direct computation gives:
\begin{equation}
\label{nab2pa}
\nabla^2 \Ga(z)(u,u) = 2 \pi \alpha \left( 2\pi \alpha \left| z \cdot u \right|^2 - |u|^2 \right) e^{-\pi \alpha |z|^2}.
\end{equation}
and then for $t \in [0,1]$, applying \eqref{nab2pa} to $z = x + tu$ yields:
\begin{equation}
\label{nab2papplied}
\nabla^2 \Ga(x + tu) \left(u,u\right) = 2 \pi \alpha \left( 2\pi \alpha \left| (x + tu) \cdot u \right|^2 - |u|^2 \right) e^{-\pi \alpha |x + tu|^2},
\end{equation}
Expanding the squares, using $t x \cdot u \geq - |x \cdot u|$, and discarding the non-negative $2 \pi \alpha t^2 |u|^4$ term, we get:
\begin{multline}
\label{expandingsquaresTaylor}
\left( 2\pi \alpha \left| (x + tu) \cdot u \right|^2 - |u|^2 \right) e^{-\pi \alpha |x + tu|^2} \\ \geq \left( 2\pi \alpha |x \cdot u|^2 - 4 \pi \alpha |x \cdot u| |u|^2 - |u|^2 \right) e^{-\pi \alpha |x|^2 - \pi \alpha t^2 |u|^2 - 2 \pi t \alpha x \cdot u}.
\end{multline}
Since we assume that $\|\PP\| \leq \frac{1}{\alpha}$, the measure $\Px$ is supported on the disk $\{u, |u| \leq \frac{2}{\alpha}\}$. For $|u| \leq \frac{2}{\alpha}$, we have, for some $\Cc$ independent of $\alpha \geq 1$ and of $x, t, u$:
\begin{equation}
\label{eq:factorcontrol}
 4 \pi \alpha |x \cdot u| |u|^2 \leq \pi \alpha |x \cdot u|^2  + \Cc |u|^2
\end{equation}
using Young's inequality first, then our assumption on $|u|$. For the same reasons:
\begin{equation}
\label{eq:exponentcontrol}
0 \leq \pi \alpha t^2 |u|^2 \leq \Cc, \quad - \Cc |x| \leq 2 \pi t \alpha x \cdot u \leq \Cc |x|.
\end{equation}
Using \eqref{eq:factorcontrol}, we get:
\begin{equation*}
2\pi \alpha |x \cdot u|^2 - 4 \pi \alpha |x \cdot u| |u|^2 - |u|^2 \geq \pi \alpha |x \cdot u|^2  - (\Cc+1) |u|^2,
\end{equation*}
and thus, returning to \eqref{expandingsquaresTaylor} and controlling the exponent by its worst/best value using \eqref{eq:exponentcontrol}:
\begin{equation}
\label{BestWorst}
\left( 2\pi \alpha \left| (x + tu) \cdot u \right|^2 - |u|^2 \right) e^{-\pi \alpha |x + tu|^2} \geq \pi \alpha |x \cdot u|^2 e^{-\pi \alpha |x|^2 - \Cc |x| - \Cc} - (\Cc+1)|u|^2  e^{-\pi \alpha |x|^2 + \Cc |x|}.
\end{equation}

\paragraph{Step 2. A lower bound for the first shell.}
We focus on the first shell of the lattice $\Ss$, of radius $\xstar$.
\begin{claim}
\label{claim:AlphaLargeFirstShell}
There exists $\cc > 0$ such that, if $\alpha$ is large enough:
\begin{equation}
\label{AlphaLargeFirstShell}
\sum_{x \in \Ss} \int_{\R^2} \left( \pi \alpha |x \cdot u|^2 e^{-\pi \alpha |x|^2 - \Cc |x| - \Cc} - (\Cc+1)|u|^2  e^{-\pi \alpha |x|^2 + \Cc |x|}  \right) \dd \Px(u) 
\geq \cc \alpha e^{-\pi \alpha \xstar^2} \FS(\PP).
\end{equation}
\end{claim}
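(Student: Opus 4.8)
The plan is to exploit that every point of the first shell $\Ss$ has the same norm $\xstar$, so that the exponential weights appearing in \eqref{AlphaLargeFirstShell} are constants which factor out of the sum over $x \in \Ss$, and then to invoke the periodic $2$-design inequality of Proposition \ref{prop:Random2design}.

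First I would note that $|x| = \xstar$ for all $x \in \Ss$, so that $e^{-\pi\alpha|x|^2 - \Cc|x| - \Cc} = e^{-\pi\alpha\xstar^2}\,e^{-\Cc\xstar - \Cc}$ and $e^{-\pi\alpha|x|^2 + \Cc|x|} = e^{-\pi\alpha\xstar^2}\,e^{\Cc\xstar}$, both independent of $x \in \Ss$. Pulling these constants out, the left-hand side of \eqref{AlphaLargeFirstShell} equals
\[
e^{-\pi\alpha\xstar^2}\left( \pi\alpha\, e^{-\Cc\xstar-\Cc} \sum_{x\in\Ss}\int_{\R^2} |x\cdot u|^2 \dd\Px(u) \;-\; (\Cc+1)\,e^{\Cc\xstar}\sum_{x\in\Ss}\int_{\R^2} |u|^2\dd\Px(u)\right).
\]

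Next I would apply Proposition \ref{prop:Random2design} to the shell $S = \Ss$, of radius $r = \xstar$: in the form valid after dividing \eqref{eq:random2design} by $N^2$ and rewriting via the measures $\Px$, it gives $\sum_{x\in\Ss}\int_{\R^2} |x\cdot u|^2\dd\Px(u) \geq \tfrac{1}{4}\xstar^2\sum_{x\in\Ss}\int_{\R^2} |u|^2\dd\Px(u)$. Since by definition \eqref{def:FS} we have $\sum_{x\in\Ss}\int_{\R^2}|u|^2\dd\Px(u) = \FS(\PP)$, substituting both facts into the displayed expression shows that the left-hand side of \eqref{AlphaLargeFirstShell} is at least
\[
e^{-\pi\alpha\xstar^2}\,\FS(\PP)\left(\frac{\pi\xstar^2}{4}\,e^{-\Cc\xstar-\Cc}\,\alpha \;-\; (\Cc+1)\,e^{\Cc\xstar}\right).
\]

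Finally, the coefficient of $\alpha$ above is a strictly positive constant depending only on $\xstar$ and $\Cc$, so there is a threshold (depending only on these) beyond which the parenthesis is at least $\tfrac{\pi\xstar^2}{8}e^{-\Cc\xstar-\Cc}\,\alpha$; taking $\cc := \tfrac{\pi\xstar^2}{8}e^{-\Cc\xstar-\Cc} > 0$ then yields \eqref{AlphaLargeFirstShell} for all sufficiently large $\alpha$. I do not expect any genuine obstacle here: the only substantive ingredient is Proposition \ref{prop:Random2design}, and the rest is bookkeeping rendered painless by the constancy of $|x|$ on the first shell. The one point deserving a line of care is that the ``$\alpha$ large enough'' in the statement is permitted to depend on $\Cc$ — which it does — and that this is harmless since $\Cc$ (and $\xstar$) are fixed at this stage.
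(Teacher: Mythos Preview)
Your proposal is correct and follows essentially the same approach as the paper: factor out the exponentials using $|x|=\xstar$ on $\Ss$, apply Proposition \ref{prop:Random2design}, identify $\FS(\PP)$, and absorb the negative term for $\alpha$ large. The only cosmetic difference is that you retain the $\xstar^2$ factor from Proposition \ref{prop:Random2design} (giving $\tfrac{\pi\xstar^2}{4}$) whereas the paper writes $\tfrac{\pi}{4}$; since $\xstar^2>1$ this is harmless either way.
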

\begin{proof}
Using first Proposition \ref{prop:Random2design}, then the definition \eqref{def:FS} of $\FS$, we have:
\begin{multline*}
\sum_{x \in \Ss} \int_{\R^2} \pi \alpha  |x \cdot u|^2 e^{-\pi \alpha |x|^2 - \Cc |x| - \Cc} \dd \Px(u) = \pi \alpha e^{-\pi \alpha \xstar^2 - \Cc \xstar - \Cc} \times \left(\sum_{x \in \Ss} \int_{\R^2} |x \cdot u|^2\dd \Px(u)\right) \\  \geq  \pi \alpha e^{-\pi \alpha \xstar^2 - \Cc \xstar - \Cc} \times \frac{1}{4} \left(\sum_{x \in \Ss}  \int_{\R^2} |u|^2\dd \Px(u)\right) = \left(\frac{\pi}{4} e^{- \Cc \xstar - \Cc}\right) \alpha e^{-\pi \alpha \xstar^2} \FS(\PP).
\end{multline*}
Write $\left(\frac{\pi}{4} e^{- \Cc\xstar - \Cc}\right)$ as some positive constant $\cc$. The left-hand side of \eqref{AlphaLargeFirstShell} is thus bounded below by:
\begin{equation*}
\left(\cc \alpha  - (\Cc+1)e^{\Cc \xstar} \right) e^{-\pi \alpha \xstar^2} \FS(\PP).
\end{equation*}
For $\alpha$ large enough, we have $\cc \alpha  - (\Cc+1)e^{\Cc \xstar} \geq \hal \cc \alpha$, which concludes the proof of Claim \ref{claim:AlphaLargeFirstShell}.
 \end{proof}
This last step only works for a fixed radius of the shell. For “higher” shells, we use a rough lower bound.

\paragraph{Step 3. A lower bound for higher shells.}
\begin{claim}
If $x \in \AD \setminus \{0\}$, $t \in [0,1]$, and $\Px$ is supported on $\{u, |u| \leq \epsilon \}$ for some $\epsilon > 0$, we have:
\begin{equation}
\label{RoughLBx}
\int_{\R^2} \left( 2\pi \alpha \left| (x + tu) \cdot u \right|^2 - |u|^2 \right) e^{-\pi \alpha |x + tu|^2} \dd \Px(u) \geq -   e^{-\pi \alpha |x|^2 + 2 \pi \alpha |x| \epsilon} \left( \int_{\R^2} |u|^2 \dd \Px(u)  \right).
\end{equation}
\end{claim}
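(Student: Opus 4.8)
The claim is a crude pointwise (in $x$) lower bound for the integrand appearing in \eqref{TaylorIntegral}, valid when $\Px$ is supported on a disk of radius $\epsilon$. The strategy is simply to discard the positive first term and control the exponential factor from below. Concretely, since $2\pi\alpha|(x+tu)\cdot u|^2 \geq 0$, we have
\begin{equation*}
\left( 2\pi \alpha \left| (x + tu) \cdot u \right|^2 - |u|^2 \right) e^{-\pi \alpha |x + tu|^2} \geq - |u|^2 e^{-\pi \alpha |x + tu|^2}.
\end{equation*}
It remains to bound $e^{-\pi\alpha|x+tu|^2}$ from above, i.e. $|x+tu|^2$ from below. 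Expanding, $|x+tu|^2 = |x|^2 + 2t\, x\cdot u + t^2|u|^2 \geq |x|^2 + 2t\,x\cdot u \geq |x|^2 - 2|x||u| \geq |x|^2 - 2|x|\epsilon$, using $t \in [0,1]$, Cauchy–Schwarz, and $|u| \leq \epsilon$ on $\supp \Px$. Hence $e^{-\pi\alpha|x+tu|^2} \leq e^{-\pi\alpha|x|^2 + 2\pi\alpha|x|\epsilon}$ for every $u$ in the support.

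Plugging this in and integrating against the probability measure $\Px$, the factor $e^{-\pi\alpha|x|^2+2\pi\alpha|x|\epsilon}$ comes out of the integral (it does not depend on $u$), leaving exactly
\begin{equation*}
\int_{\R^2} \left( 2\pi \alpha \left| (x + tu) \cdot u \right|^2 - |u|^2 \right) e^{-\pi \alpha |x + tu|^2} \dd \Px(u) \geq - e^{-\pi\alpha|x|^2 + 2\pi\alpha|x|\epsilon} \int_{\R^2} |u|^2 \dd \Px(u),
\end{equation*}
which is precisely \eqref{RoughLBx}. There is no real obstacle here — every inequality is an elementary estimate — the only thing to be careful about is the direction of the bound on the exponent (we want an upper bound on $e^{-\pi\alpha|x+tu|^2}$, hence a lower bound on $|x+tu|^2$, and the cross term $2t\,x\cdot u$ must be bounded below by $-2|x|\epsilon$, using that it could be negative). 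I would also note that the same computation could be extracted directly from \eqref{expandingsquaresTaylor} by discarding the $2\pi\alpha|x\cdot u|^2$ term there as well, but deriving it cleanly from scratch is shorter.

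The purpose of this claim, which will be used in Step 4 to sum over the higher shells of $\AD$, is that the wasteful factor $e^{2\pi\alpha|x|\epsilon}$ is harmless: with $\epsilon$ of order $\alpha^{-1}$ (as in the hypothesis $\|\PP\| \leq \alpha^{-1}$, giving $\supp\Px \subset \{|u|\leq 2\alpha^{-1}\}$), this factor is bounded by $e^{4\pi|x|}$, which is dominated by the Gaussian $e^{-\pi\alpha|x|^2}$ once $\alpha$ is large and $|x| \geq \xstar$, so the total contribution of the higher shells is exponentially smaller than the first-shell gain obtained in Claim \ref{claim:AlphaLargeFirstShell}. But that synthesis belongs to the subsequent step; for the claim itself the proof is the three-line estimate above.
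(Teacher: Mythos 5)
Your proof is correct and matches the paper's argument exactly: discard the nonnegative $2\pi\alpha|(x+tu)\cdot u|^2$ term, then bound $e^{-\pi\alpha|x+tu|^2} \leq e^{-\pi\alpha|x|^2 + 2\pi\alpha|x|\epsilon}$ on the support of $\Px$ and integrate. The paper states this in a single line; you have merely spelled out the elementary expansion $|x+tu|^2 \geq |x|^2 - 2|x|\epsilon$ justifying the exponential bound.
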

\begin{proof}
We simply discard the positive contribution, and write: $e^{-\pi \alpha |x + tu|^2} \leq e^{-\pi \alpha |x|^2 + 2 \pi \alpha |x| \epsilon}$.
\end{proof}
Since we assume here that $\|\PP\|$ is smaller than $\frac{1}{\alpha}$, we can take $\epsilon = \frac{2}{\alpha}$ in \eqref{RoughLBx} and get:
\begin{equation}
\label{RoughLBx2}
\int_{\R^2} \left( 2\pi \alpha \left| (x + tu) \cdot u \right|^2 - |u|^2 \right) e^{-\pi \alpha |x + tu|^2} \dd \Px(u) \geq -   e^{-\pi \alpha |x|^2 + 4 \pi |x|} \left( \int_{\R^2} |u|^2 \dd \Px(u)  \right).
\end{equation}
Combining \eqref{EaADPPEaAD}, \eqref{TaylorIntegral}, \eqref{nab2papplied}, \eqref{BestWorst}, \eqref{AlphaLargeFirstShell} and \eqref{RoughLBx2}, and integrating over $t$, we find that, for some small constant $\cc > 0$, provided $\alpha$ is large enough, the quantity $\Ea(\AD + \PP) - \Ea(\AD)$ is bounded below by:
\begin{equation}
\label{TWoSumsAlpha}
2\pi \alpha \left(\cc \alpha  e^{-\pi \alpha \xstar^2} \FS(\PP) - \sum_{x \in \AD, |x| > \xstar}  e^{-\pi \alpha |x|^2 + 4 \pi |x|}  \left( \int_{\R^2} |u|^2 \dd \Px(u)  \right) \right).
\end{equation}
We want to argue that, for $\alpha$ large enough, the second sum in \eqref{TWoSumsAlpha} is dominated by the first one because of the quickly decreasing Gaussian weight. However, we also need to compare the values of $ \int_{\R^2} |u|^2 \dd \Px(u)$ for $x$ in the first shell, which contribute to $\FS(\PP)$ (see \eqref{def:FS}), and its values in “higher” shells.

\paragraph{Step 4. Comparing relative displacements far and close to the origin.}
\begin{claim}
\label{claim:HigherComparedToSS}
For $x \in \AD$, with $|x| > \xstar$, we have:
\begin{equation}
\label{eq:HigherComparedToSS}
\int_{\R^2} |u|^2 \dd \Px(u) \leq \left(\frac{|x|}{\xstar}\right)^2 \FS(\PP). 
\end{equation}
\end{claim}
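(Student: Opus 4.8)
The plan is to write $\int |u|^2\,\dd\Px(u)$ as a squared $L^2$-norm, decompose the far lattice vector $x$ into a short chain of first-shell vectors, and conclude with the triangle inequality and stationarity. By \eqref{eq:QxProbabilistic} we have $\int_{\R^2}|u|^2\,\dd\Px(u) = \E[\,|\bPP_x - \bPP_0|^2\,] = \|\bPP_x-\bPP_0\|_{L^2}^2$. Suppose we can write $x = s_1 + \dots + s_m$ with each $s_j \in \Ss$, and set $x_0 := 0$, $x_j := x_{j-1}+s_j$, so $x_m = x$. Then the telescoping identity $\bPP_x - \bPP_0 = \sum_{j=1}^m (\bPP_{x_j} - \bPP_{x_{j-1}})$, Minkowski's inequality and Cauchy--Schwarz give
\begin{equation*}
\int_{\R^2}|u|^2\,\dd\Px(u) \;\le\; \Bigl(\sum_{j=1}^m \bigl\|\bPP_{x_j}-\bPP_{x_{j-1}}\bigr\|_{L^2}\Bigr)^{\!2} \;\le\; m\sum_{j=1}^m \bigl\|\bPP_{x_j}-\bPP_{x_{j-1}}\bigr\|_{L^2}^2 .
\end{equation*}
By stationarity \eqref{eq:bPP_stat}, the pair $(\bPP_{x_j},\bPP_{x_{j-1}})$ has the same law as $(\bPP_{s_j},\bPP_0)$, hence $\|\bPP_{x_j}-\bPP_{x_{j-1}}\|_{L^2}^2 = \int_{\R^2}|u|^2\,\dd\mathrm{Q}_{s_j}(u)$ by \eqref{eq:QxProbabilistic}. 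Grouping the $s_j$ by value, with $c_s := \#\{\,j : s_j = s\,\}$ for $s\in\Ss$, the bound becomes $\int_{\R^2}|u|^2\,\dd\Px(u) \le m\sum_{s\in\Ss} c_s \int_{\R^2}|u|^2\,\dd\mathrm{Q}_s(u) \le \bigl(m\max_{s\in\Ss}c_s\bigr)\FS(\PP)$, using positivity and the definition \eqref{def:FS} of $\FS$.

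It then remains to produce, for each $x\in\AD$ with $|x|>\xstar$, a decomposition into first-shell vectors with $m\,\max_s c_s \le (|x|/\xstar)^2$. I would invoke the usual hexagonal sector picture: the six minimal vectors of $\AD$ are $\pm\sigma,\ \pm\tau,\ \pm(\sigma-\tau)$, and, listed in cyclic angular order, each consecutive pair $(e_1,e_2)$ satisfies $e_1\cdot e_2 = \tfrac12\xstar^2$ (an angle $\pi/3$) and spans a parallelogram of area $\xstar^2\sin\tfrac{\pi}{3} = 1$, equal to the covolume of $\AD$; hence $(e_1,e_2)$ is a $\Z$-basis of $\AD$. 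The six cones spanned by these consecutive pairs tile $\R^2$, so $x$ lies in one of them, which means $x = a e_1 + b e_2$ with $a,b \in \Z_{\ge 0}$. Taking $a$ copies of $e_1$ followed by $b$ copies of $e_2$ gives $m = a+b$ and $\max_s c_s = \max(a,b)$; since $|e_1| = |e_2| = \xstar$ and $e_1\cdot e_2 = \tfrac12\xstar^2$ we get $|x|^2 = \xstar^2(a^2+ab+b^2)$, and the elementary inequality $(a+b)\max(a,b)\le a^2+ab+b^2$ (clear on assuming $a\ge b$) closes the argument.

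The only mildly delicate point is the lattice geometry of the second paragraph — checking that every $x\in\AD$ falls into one of the six minimal-vector cones and there has non-negative integer coordinates in the bounding basis — but this is entirely standard for $\AD$; everything else is soft. One could equivalently argue on the Fourier side via Plancherel's identity \eqref{eq:Plancherel}, since the estimate above reads, frequency by frequency, as $w_x(k) \le (|x|/\xstar)^2\sum_{s\in\Ss} w_s(k)$ with $w_\bullet$ as in Lemma \ref{lem:geomwk}; but the probabilistic formulation above seems the cleanest.
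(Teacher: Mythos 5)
Your proof is correct and follows the paper's own outline: probabilistic rewriting via \eqref{eq:QxProbabilistic}, telescoping $\bPP_x-\bPP_0$ along a chain of first-shell steps, the triangle inequality and Cauchy--Schwarz, then stationarity. The genuine difference is in the final accounting, and it matters. The paper produces a first-shell path with $n$ steps, asserts $n\le|x|/\xstar$, and bounds the telescoped sum by $n^2\,\FS(\PP)$; but that step-count assertion is false. For $x=\sigma+\tau$ the minimal number of first-shell steps is $2$, while $|x|/\xstar=\sqrt3<2$; more generally the word length of $x=ae_1+be_2$ (with $a,b\ge 0$, in the two bounding minimal vectors) is $a+b$, and $(a+b)^2>a^2+ab+b^2=(|x|/\xstar)^2$ whenever $a,b>0$, so the paper's route only yields $\tfrac43(|x|/\xstar)^2\,\FS(\PP)$ in the worst case. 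You obtain the stated constant by grouping the steps by direction: instead of the crude bound $m\,\FS(\PP)$ on $\sum_{j=1}^m\int|u|^2\,\dd\mathrm{Q}_{s_j}(u)$ you use $\bigl(\max_{s\in\Ss}c_s\bigr)\FS(\PP)$, after which $(a+b)\max(a,b)\le a^2+ab+b^2$ closes the argument. The lattice-geometry justification you give (consecutive minimal vectors form a $\Z$-basis, the six sectors cover the plane, hence the integer coordinates of $x$ in the bounding sector are nonnegative) is correct and standard. For the paper's downstream use in Steps~5--6 of Lemma~\ref{lem:largealpha} the discrepancy is harmless, since the claim is only invoked up to a multiplicative constant, but your version is the one that actually matches the statement of Claim~\ref{claim:HigherComparedToSS}.
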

\begin{proof}
This is easy to see with the probabilistic formulation, for which (see \eqref{eq:QxProbabilistic}):
\begin{equation*}
\int_{\R^2} |u|^2 \dd \Px(u) = \E\left[ |\bPP_{x} - \bPP_{0}|^2 \right],
\end{equation*} 
where $\bPP$ is the $\AD$-stationary random perturbation defined in \eqref{eq:bPP}. Pick a lattice point $x \in \AD$, there exists $n \leq \frac{|x|}{\xstar}$ and a path $0 = x_1, \dots, x_n = x$ within $\AD$ such that $x_{i+1} - x_i$ belongs to the first shell $\Ss$ for all $i = 0, \dots, n-1$. We have, using Cauchy-Schwarz, the stationarity of $\bPP$, and a rough upper bound:
\begin{multline*}
\E\left[ |\bPP_{x} - \bPP_{0}|^2 \right]= \E\left[ \left|\sum_{i=0}^{n-1} \bPP_{x_{i+1}} - \bPP_{x_{i}}\right|^2 \right] \leq n \sum_{i=0}^{n-1} \E\left[ \left|\bPP_{x_{i+1}} - \bPP_{x_{i}}\right|^2 \right] \\ = n \sum_{i=0}^{n-1} \E\left[ \left|\bPP_{x_{i+1} - x_i} - \bPP_{0}\right|^2 \right] \leq n^2 \sum_{x \in \Ss}\E\left[ \left|\bPP_{x} - \bPP_{0}\right|^2 \right].
\end{multline*}
 \end{proof}

\paragraph{Step 5. Proof of \eqref{quantitativealphaL1}.}
Inserting \eqref{eq:HigherComparedToSS} into \eqref{TWoSumsAlpha}, we find:
 \begin{equation*}
\Ea(\AD + \PP) - \Ea(\AD) \geq 2\pi \alpha \left(\cc \alpha e^{-\pi \alpha \xstar^2 } - \frac{1}{\xstar^2} \sum_{x \in \AD, |x| > \xstar} |x|^2 e^{-\pi \alpha |x|^2 + 4 \pi |x|} \right) \FS(\PP). 
 \end{equation*}
As $\alpha \to \infty$, we have $\sum_{x \in \AD, |x| > \xstar} |x|^2 e^{-\pi \alpha |x|^2 + 4 \pi |x|} = o(e^{-\pi \alpha \xstar^2 })$ which concludes the proof of \eqref{quantitativealphaL1} with an extra factor $\alpha^2$, which we discard for clarity (here $\alpha$ is bounded below by $1$). 

This yields local optimality of $\AD$ with respect to Gaussian potentials that are “very peaked”, and are thus essentially nearest-neighbor interactions. The main ingredient is our “periodic $2$-design inequality”, which basically leverages the lattice symmetry to ensure that some Hessian is positive. 

\vspace{0.2cm}

We conclude the proof of Lemma \ref{lem:largealpha} by proving \eqref{quantitativealphaL2}. We now only assume that $\|\PP\|$ satisfies \eqref{eq:pp100}.

\paragraph{Step 6. A general lower bound.}
We proceed as above: for each $x \in \AD \setminus\{0\}$, use the Taylor expansion \eqref{TaylorIntegral} and insert the expression \eqref{nab2papplied} for the second derivative. Then, discard the positive contribution and simply use the rough lower bound \eqref{RoughLBx} with $\epsilon = 2 \|\PP\| \leq \frac{\xstar}{10}$. We get:
\begin{equation}
\label{prequantalphaL2}
\Ea(\AD + \PP) - \Ea(\AD) \geq - \Cc \alpha \sum_{x \in \AD, x \neq 0} e^{-\pi \alpha |x|^2 + \frac{1}{5} \pi \alpha |x| \xstar } \left( \int_{\R^2} |u|^2 \dd \Px(u)  \right).
\end{equation}
Inserting \eqref{eq:HigherComparedToSS}, we can actually write this lower bound in terms of $\FS(\PP)$:
\begin{equation*}
\Ea(\AD + \PP) - \Ea(\AD) \geq - \Cc \alpha \left(\sum_{x \in \AD, x \neq 0} |x|^2 e^{-\pi \alpha |x|^2 + \frac{1}{5}  \pi \alpha |x| \xstar}\right) \FS(\PP).
\end{equation*}
For $\alpha$ large enough, the sum is dominated by the contribution of the first shell, which yields
\begin{equation*}
\Ea(\AD + \PP) - \Ea(\AD) \geq - \Cc \alpha e^{-\pi \alpha \xstar^2 + \frac{1}{5}  \pi \alpha \xstar^2} \times \FS(\PP),
\end{equation*}
and since $\alpha e^{-\pi \alpha \xstar^2 + \frac{1}{5}  \pi \alpha \xstar^2} = o(e^{- \frac{\pi \alpha}{2} \xstar^2})$ as $\alpha \to \infty$, we get \eqref{quantitativealphaL2}.

\vspace{0.2cm}

In conclusion, we can choose $\alphaL \geq 1$ large enough such that \eqref{quantitativealphaL1} and \eqref{quantitativealphaL2} hold for $\alpha \geq \alphaL$.

\subsection{Second case: \texorpdfstring{$\alpha$}{alpha} small.}
We now assume that $\alpha \in (0, \alphaL)$, where $\alphaL$ is the threshold fixed above. We start with the proof of \eqref{LowerboundAlphaSmall}.

\paragraph{Step 0. Decomposition into low and high layers.}
Let $R  \geq 0$ to be chosen later. We return to \eqref{EaADPPEaAD} and split the lattice sum into “low” and “high” layers, namely we write $
\Ea(\AD + \PP) - \Ea(\AD) = \Low + \High$:
\begin{equation*}
\Low :=  \sum_{x \in \AD, |x| \leq R} \int_{\R^2} \left(\Ga(x+u)  - \Ga(x)\right) \dd \Px(u), \quad \High :=  \sum_{x \in \AD, |x| > R} \int_{\R^2} \left(\Ga(x+u)  - \Ga(x)\right) \dd \Px(u).
\end{equation*}
We start with the $\Low$ term (we can include $x = 0$ in the sum, as its contribution is zero since $\mathrm{Q}_0 = \delta_0$).

\paragraph{Step 1: Taylor expansion to third order.} For all $x, u$ we can write:
\begin{equation*}
\Ga(x+u) = \Ga(x) + \nabla \Ga(x) \cdot u + \hal \nabla^2 \Ga(x)(u,u) + \O(|u|^3) \|\nabla^3 \Ga\|_{\infty}. 
\end{equation*}
By scaling, we have $\|\nabla^3 \Ga\|_{\infty} = \Cc \alpha^{\frac{3}{2}}$. Integrating this expansion against $\Px$ yields, for some $\Cc >0$:
\begin{equation*}
\int_{\R^2} \Ga(x+u) \dd \Px(u) \geq \Ga(x) + \hal \int_{\R^2} \nabla^2 \Ga(x)(u,u) \dd \Px(u) - \Cc \|\PP\| \alpha^{\frac{3}{2}} \int_{\R^2} |u|^2 \dd \Px(u)
\end{equation*}
where we used the fact that $\Px$ is centered, and supported on a disk of radius $2 \|\PP\|$. We may thus write:
\begin{equation}
\label{MainLow1}
\Low
= \hal \sum_{x \in \AD, |x| \leq R} \int_{\R^2} \nabla^2 \Ga(x)(u,u) \dd \Px(u) + \ErrorLow_1,
\end{equation}
with an error term satisfying:
\begin{equation}
\label{ErrorLow1}
 \ErrorLow_1 \geq - \Cc \|\PP\| \alpha^{\frac{3}{2}} \sum_{x \in \AD, |x| \leq R} \int_{\R^2} |u|^2 \dd \Px(u).
\end{equation}

\paragraph{Step 2: Re-introducing the missing layers.} Using \eqref{nab2pa}, we know that for all $x,u$ we have:
\begin{equation}
\nabla^2 \Ga(x)(u,u) = 2 \pi \alpha \left( 2\pi \alpha \left| x \cdot u \right|^2 - |u|^2 \right) e^{-\pi \alpha |x|^2} \leq 4 \pi^2 \alpha^2 |x|^2 |u|^2 e^{-\pi \alpha |x|^2}.
\end{equation}
We now simply re-introduce the missing layers in the sum over $x$ in \eqref{MainLow1} and write:
\begin{multline}
\sum_{x \in \AD, |x| \leq R} \int_{\R^2} \nabla^2 \Ga(x)(u,u) \dd \Px(u) = \sum_{x \in \AD} \int_{\R^2} \nabla^2 \Ga(x)(u,u) \dd \Px(u) - \sum_{x \in \AD, |x| > R} \int_{\R^2} \nabla^2 \Ga(x)(u,u) \dd \Px(u) \\ \geq \sum_{x \in \AD} \int_{\R^2} \nabla^2 \Ga(x)(u,u) \dd \Px(u) - 4 \pi^2 \alpha^2 \sum_{x \in \AD, |x| > R} |x|^2 e^{-\pi \alpha |x|^2} \left(\int_{\R^2} |u|^2 \dd \Px(u)\right).
\end{multline}
Combining this with \eqref{MainLow1}, \eqref{ErrorLow1}, we write:
\begin{equation}
\Low = \MainLow + \ErrorLow_2,
\end{equation}
where $\MainLow$ is the full lattice sum
\begin{equation}
\label{def:MainLow}
\MainLow := \hal \sum_{x \in \AD} \int_{\R^2} \nabla^2 \Ga(x)(u,u) \dd \Px(u), 
\end{equation}
and the error term is now such that:
\begin{equation}
\label{ErrorLow2}
 \ErrorLow_2 \geq - \Cc \|\PP\| \sum_{x \in \AD, |x| \leq R} \int_{\R^2} |u|^2 \dd \Px(u) - \Cc  \sum_{x \in \AD, |x| > R} |x|^2 e^{-\pi \alpha |x|^2} \left(\int_{\R^2} |u|^2 \dd \Px(u)\right).
\end{equation}
\begin{remark}
In \eqref{ErrorLow2} we discarded some powers of $\alpha$ ($\alpha^{\frac{3}{2}}$ in front of the first sum and $\alpha^2$ in front of the second one), and blended them into the multiplicative constant, which is valid because $\alpha \leq \alphaL$. Those prefactors would only have a minor effect, as ultimately the main barrier to overcome is exponential in $\frac{1}{\alpha}$, so we prefer to not keep track of them.
\end{remark}

In the next step, we focus on $\MainLow$ and move to Fourier side using Poisson summation's formula.

\paragraph{Step 3: Introducing the autocorrelation and the spectral measure(s).}
For all $x,u$, write $u = (u_1, u_2)$ and $\nabla^2 \Ga(x)(u,u)$ as:
\begin{equation}
\label{nabla2pab}
  \nabla^2 \Ga(x)(u,u) = \sum_{a,b \in \{1,2\}} \partial^2_{ab} \Ga(x) u_a u_b,  
\end{equation} 
and observe that, with the notation introduced in \eqref{eq:defCcx}, \eqref{eq:G0Gx}, we have:
\begin{equation}
\label{uaubPxCcab}
  \int_{\R^2} u_a u_b \dd \Px(u) = \Ccab = 2 \left[\RRab_0 - \RRab_x \right],
\end{equation} 
where $\RR$ is the “autocorrelation” of $\PP$. Combining \eqref{nabla2pab}, \eqref{uaubPxCcab} and \eqref{def:MainLow}, we may thus write:
\begin{equation}
  \MainLow = \sum_{a,b \in \{1,2\}} \sum_{x \in \AD}  \partial^2_{ab} \Ga(x) \left[\RRab_0 - \RRab_x \right].
\end{equation}
Since $\Ga$ is a Schwartz function, so are its derivatives. Moreover the autocorrelation is bounded, so we may apply Poisson's summation formula, which gives, for each $a,b \in \{1,2\}$:
\begin{equation}
  \sum_{x \in \AD}  \partial^2_{ab} \Ga(x) \left[\RRab_0 - \RRab_x \right] = \sum_{k \in \ADD} \widehat{\partial^2_{ab} \Ga}(k) \times \RRab_0 -  \widehat{\partial^2_{ab} \Ga} \star \Csab(k)
\end{equation}
using the spectral measures $\Csab$ introduced in Section \ref{sec:Spectral}. An elementary computation shows that:
\begin{equation}
  \widehat{\partial^2_{ab} \Ga}(k) = - 4 \pi^2 \hPa(k) k_a k_b
\end{equation}
for all $k$ and $a,b \in \{1,2\}$. Since $\Csab$ is a measure on $\Omega$, we can write the convolution $  \widehat{\partial^2_{ab} \Ga} \star \Csab$ as:
\begin{equation}
  \widehat{\partial^2_{ab} \Ga} \star \Csab(k) = - 4 \pi^2 \int_{\Omega} \hPa(k+\omega) (k+\omega)_a (k+\omega)_b \dd \Csab(\omega),
\end{equation}
and we obtain, after summing over $a,b \in \{1,2\}$ and over $k \in \ADD$:
\begin{equation}
\MainLow =  4 \pi^2 \sum_{k \in \ADD} \sum_{a,b \in \{1,2\}}  \left( \int_{\Omega} \hPa(k+\omega) (k+\omega)_a (k+\omega)_b  \dd \Csab(\omega) - \RRab_0 \hPa(k) k_a k_b\right).  
\end{equation}
We now combine the four coefficients $\Csab$ of the matrix-valued spectral measure $\Cs$, introduce the trace measure $\tCs$ as in \eqref{eq:tCs}, and use the identity \eqref{quadraticCs} to write
\begin{multline*}
\sum_{k \in \ADD} \sum_{a,b \in \{1,2\}} \int_{\Omega}  \hPa(k + \omega) (k+\omega)_a (k+ \omega)_b \dd \Csab(\omega) \\ = 
\sum_{k \in \ADD} \int_{\Omega} \hPa(k+\omega) \left(\lambda_1(\omega) \left((k+\omega) \cdot v_1(\omega)\right)^2 + \lambda_2(\omega) \left((k+\omega) \cdot v_2(\omega)\right)^2   \right) \dd \tCs(\omega),
\end{multline*}
where $\lambda_1, \lambda_2$ are the $\omega$-dependent eigenvalues of the “trace derivative” $\Cst(\omega)$, and $v_1, v_2$ its eigenvectors, forming an orthonormal basis.
On the other hand, using the fact that $\RR_0 = \int_{\Omega} \dd \Cs$ by \eqref{GasTFrho}, and applying \eqref{quadraticCs} again,  we can write:
\begin{multline*}
\sum_{k \in \ADD} \sum_{a,b \in \{1,2\}} \RRab_0 \hPa(k) k_a k_b  = \sum_{k \in \ADD} \sum_{a,b \in \{1,2\}} \int_{\Omega} \hPa(k) k_a k_b \dd \Csab(\omega) \\ = \int_{\Omega} \sum_{k \in \ADD} \hPa(k) \left(\lambda_1(\omega) \left(k \cdot v_1(\omega)\right)^2 + \lambda_2(\omega) \left(k \cdot v_2(\omega)\right)^2   \right) \dd \tCs(\omega).
\end{multline*}
Finally, recall that $\hPa(k)  = \alpha^{-1} e^{-\frac{\pi}{\alpha}|k|^2}$ (see \eqref{eq:hPa}) and introduce, as in \eqref{def:Psia}, the function:
\begin{equation*}
\Psiav : \omega \mapsto \sum_{k \in \ADD} |(k+\omega)\cdot v|^2 e^{- \frac{\pi}{\alpha} |k+\omega|^2}.
\end{equation*}
We obtain the following expression, where $\lambda_1, \lambda_2, v_1, v_2$ are functions of $\omega$:
\begin{equation}
\label{MainLowMain}
  \MainLow = 4 \pi^2 \alpha^{-1} \left( \int_{\Omega} \left( \lambda_1 \left(\PsiavU(\omega) - \PsiavU(0)\right) +  \lambda_2 \left(\PsiavD(\omega) - \PsiavD(0)\right) \right) \dd \tCs(\omega)  \right).
\end{equation}

\paragraph{Step 4: Coercivity of $\MainLow$.} By Proposition \ref{prop:minimality}, we know that there exists $\cc >0 $ such that, for all $\alpha \in (0, \alphaL)$, for all choices of $v$ such that $|v| =1$, and all $\omega \in \Omega$: 
\begin{equation}
\Psiav(\omega) - \Psiav(0) \geq \cc |\omega|^2 e^{-\frac{\pi}{\alpha} \xstar^2}.
\end{equation}
We may thus write (recall that $\lambda_1 + \lambda_2 = 1$ almost everywhere, see Section \ref{sec:Spectral}):
\begin{equation*}
\int_{\Omega} \left( \lambda_1 \left(\PsiavU(\omega) - \PsiavU(0)\right) +  \lambda_2 \left(\PsiavD(\omega) - \PsiavD(0)\right) \right) \dd \tCs(\omega)
\geq \cc e^{-\frac{\pi}{\alpha} \xstar^2} \Tr\left(\int_{\Omega} |w|^2 \dd \Cs(\omega)\right). 
\end{equation*}
Inserting this into \eqref{MainLowMain}, and using our notation $\SM(\PP)$ (see \eqref{def:SM}), we obtain the following lower bound:
 \begin{equation}
\label{MainLowLowerbound}
 \MainLow \geq \cc e^{-\frac{\pi}{\alpha} \xstar^2} \SM(\PP), 
\end{equation}
where we discarded the $\alpha^{-1}$ factor for simplicity (here $\alpha \leq \alphaL$). For all $x \in \AD$, using Lemma \ref{lem:RRORRxIntomegacarre} we get:
\begin{equation}
\label{GGMainLow}
\int_{\R^2} |u|^2 \dd \Px(u) \leq \Cc |x|^2 \times \SM(\PP). 
\end{equation}
 Inserting \eqref{GGMainLow} into the expression \eqref{ErrorLow2} of $\ErrorLow_2$, this yields:
\begin{equation}
\label{ErrorLow2B}
\ErrorLow_2 \geq - \Cc \left( \|\PP\| \sum_{x \in \AD, |x| \leq R} |x|^2  + \sum_{x \in \AD, |x| > R} |x|^4 e^{-\pi \alpha |x|^2} \right) \SM(\PP).
\end{equation}
In conclusion, we have obtained $\Low = \MainLow + \ErrorLow_2$, with $\MainLow$ as in \eqref{MainLowMain}, bounded below in terms of $\SM(\PP)$, while $\ErrorLow_2$ is also bounded in terms of $\SM(\PP)$, see \eqref{MainLowLowerbound} and \eqref{ErrorLow2B}. 

Next, we turn to the “high” layers, for which we use a simple, rough lower bound.

\paragraph{Step 5. A lower bound on the high layers.}
Intuitively, if $R$ is large enough compared to $\alpha$, the high layers ($|x| > R$) should not play an important role due to the decaying Gaussian weight. We use a rough lower bound on their contributions:
\begin{claim}
\label{claim:highLayers}
We have
\begin{equation}
\label{eq:LBHighLayers}
\High
\geq - \Cc  \alpha \left(\sum_{x \in \AD, |x| > R} |x|^2 e^{- \frac{\pi \alpha}{2} |x|^2}\right) \times \SM(\PP). 
\end{equation}
\end{claim}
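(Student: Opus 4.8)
The plan is to bound the high-layer contribution term-by-term, discarding the (possibly negative) quadratic-form contribution at the cost of only an exponential-decay factor, and then to control the resulting second moments of the $\Px$ by $\SM(\PP)$ using Lemma \ref{lem:RRORRxIntomegacarre}. Concretely, for each $x \in \AD$ with $|x| > R$, I would start from the exact identity $\Ga(x+u) - \Ga(x) = e^{-\pi\alpha|x+u|^2} - e^{-\pi\alpha|x|^2}$ and simply drop the negative part: $\Ga(x+u) - \Ga(x) \geq - \Ga(x) = -e^{-\pi\alpha|x|^2}$ is too lossy (it does not involve $|u|^2$), so instead I would use a first-order Taylor bound. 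Writing $\Ga(x+u) - \Ga(x) = \nabla\Ga(x)\cdot u + \int_0^1 (1-t)\nabla^2\Ga(x+tu)(u,u)\,\dd t$, integrating against the centered measure $\Px$ kills the linear term (Remark \ref{rem:alwayscentered}), leaving $\int (\Ga(x+u)-\Ga(x))\,\dd\Px(u) = \int_0^1 (1-t)\int \nabla^2\Ga(x+tu)(u,u)\,\dd\Px(u)\,\dd t$. Using \eqref{nab2pa}, we have $\nabla^2\Ga(z)(u,u) = 2\pi\alpha(2\pi\alpha|z\cdot u|^2 - |u|^2)e^{-\pi\alpha|z|^2} \geq -2\pi\alpha|u|^2 e^{-\pi\alpha|z|^2}$, so with $z = x+tu$ and $|u| \leq 2\|\PP\| \leq \tfrac{\xstar}{10}$, hence $|z| \geq |x| - \tfrac{\xstar}{10}$, I obtain $e^{-\pi\alpha|x+tu|^2} \leq e^{-\pi\alpha(|x|-\xstar/10)^2} \leq e^{-\pi\alpha|x|^2 + \frac{\pi\alpha}{5}|x|\xstar}$.

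Putting this together and integrating over $t\in[0,1]$ (which contributes a harmless factor $\tfrac12$) gives
\begin{equation*}
\High \geq - \Cc\,\alpha \sum_{x \in \AD,\,|x| > R} e^{-\pi\alpha|x|^2 + \frac{\pi\alpha}{5}|x|\xstar}\left(\int_{\R^2} |u|^2\,\dd\Px(u)\right).
\end{equation*}
Since $|x| > R$ and $R$ will eventually be chosen large compared with $\xstar$, on this range we have $\tfrac{\pi\alpha}{5}|x|\xstar \leq \tfrac{\pi\alpha}{10}|x|^2$ once $|x| \geq 2\xstar$, so the exponent is at most $-\tfrac{9\pi\alpha}{10}|x|^2 \leq -\tfrac{\pi\alpha}{2}|x|^2$; more comfortably I absorb the crossed term and write $e^{-\pi\alpha|x|^2 + \frac{\pi\alpha}{5}|x|\xstar} \leq e^{-\frac{\pi\alpha}{2}|x|^2}$ for $|x|$ large enough (which we are free to assume by taking $R$ large). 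Finally, Lemma \ref{lem:RRORRxIntomegacarre} gives $\int_{\R^2}|u|^2\,\dd\Px(u) \leq 8\pi^2|x|^2\,\Tr(\int_\Omega|\omega|^2\,\dd\Cs(\omega)) = 8\pi^2|x|^2\,\SM(\PP)$ by the definition \eqref{def:SM} of $\SM$. Substituting yields exactly
\begin{equation*}
\High \geq -\Cc\,\alpha\left(\sum_{x\in\AD,\,|x|>R} |x|^2 e^{-\frac{\pi\alpha}{2}|x|^2}\right)\times\SM(\PP),
\end{equation*}
as claimed.

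There is no serious obstacle here: the statement is deliberately a \emph{rough} lower bound, and every step is an inequality we can afford to be wasteful about. The only points requiring a little care are (i) making sure the linear term really vanishes, which is exactly Remark \ref{rem:alwayscentered} applied to $\Px$, and (ii) bookkeeping the exponent so that the crossed term $\frac{\pi\alpha}{5}|x|\xstar$ is genuinely dominated by a fraction of $\pi\alpha|x|^2$ on the summation range — this is automatic for $|x|$ bounded below by an absolute multiple of $\xstar$, and the few small shells can either be swept into the constant $\Cc$ or handled by noting $R$ will be chosen large. The powers of $\alpha$ (here one factor $\alpha$ from the second derivative, and we have discarded another $\alpha$) are blended into the constant exactly as in the Remark following \eqref{ErrorLow2}, which is legitimate since $\alpha \leq \alphaL$ is bounded.
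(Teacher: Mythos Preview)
Your proof is correct and follows essentially the same route as the paper: Taylor expand to second order, kill the linear term by centering of $\Px$, discard the positive part of the Hessian, bound the Gaussian at $x+tu$ by one at $x$ with a crossed term in the exponent, absorb that crossed term into $-\tfrac{\pi\alpha}{2}|x|^2$, and finally invoke Lemma~\ref{lem:RRORRxIntomegacarre} to convert $\int |u|^2\,\dd\Px(u)$ into $|x|^2\,\SM(\PP)$.

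One small clarification: you hedge the exponent absorption by saying it holds ``for $|x|$ large enough (which we are free to assume by taking $R$ large)''. In fact the inequality $\tfrac{\pi\alpha}{5}|x|\xstar \le \tfrac{\pi\alpha}{2}|x|^2$ only requires $|x|\ge \tfrac{2}{5}\xstar$, which holds for every nonzero lattice point since $|x|\ge\xstar$. So no assumption on $R$ is needed --- and this matters, because the paper later applies this very claim with $R=0$ in Step~8 to prove \eqref{quantitativealphaSMALL_2}.
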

\begin{proof}
We return to the Taylor expansion used in \eqref{TaylorIntegral}, and to the expression \eqref{nab2papplied} for $\nabla^2 \Ga$, we discard the positive contribution in $ \nabla^2 \Ga(x + tu) \left(u,u\right)$ and write, for all $t \in [0,1]$ (cf. the proof of \eqref{prequantalphaL2}):
\begin{equation*}
\nabla^2 \Ga(x + tu)(u, u) \geq - 2 \pi \alpha |u|^2 e^{- \pi \alpha |x|^2 + 4 \pi \alpha |x| \|\PP\|}.
\end{equation*}
 We thus have, using \eqref{GGMainLow} in the second inequality:
\begin{multline}
\label{AnyLayerLB}
\int_{\R^2} \left(\Ga(x +u) -  \Ga(x)\right) \dd \Px(u) \geq - \Cc \alpha \left(\int_{\R^2} |u|^2 \dd \Px(u)\right) e^{- \pi \alpha |x|^2 + 4 \pi \alpha |x| \|\PP\|} \\ \geq - \Cc \alpha |x|^2 e^{- \pi \alpha |x|^2 + 4 \pi \alpha |x| \|\PP\|}  \times \SM(\PP).
\end{multline}
We now use again the fact that $e^{- \pi \alpha |x|^2 + 4 \pi \alpha |x| \|\PP\|} \leq e^{- \frac{\pi \alpha}{2} |x|^2}$. Summing over $|x| > R$ yields \eqref{eq:LBHighLayers}.
\end{proof}

\paragraph{Step 6. Combining low and high layers}
In summary, we have obtained:
\begin{equation}
\label{summaryLowHigh}
\Ea(\AD + \pP) - \Ea(\AD) = \Low + \High, \quad \Low \geq \MainLow + \ErrorLow_2,
\end{equation}
where $\MainLow$, $\ErrorLow_2$ and $\High$ are all bounded in terms of $\SM(\PP)$, see \eqref{MainLowLowerbound}, \eqref{ErrorLow2B} and \eqref{eq:LBHighLayers}. In those last two bounds, there is a lattice sum over the “high layers” $|x| > R$:
\begin{equation*}
\sum_{x \in \AD, |x| > R} |x|^4 e^{-\pi \alpha |x|^2} \text{ in \eqref{ErrorLow2B} }, \quad \sum_{x \in \AD, |x| > R}  \alpha |x|^2 e^{- \frac{\pi \alpha}{2} |x|^2} \text{ in \eqref{eq:LBHighLayers} }.
\end{equation*}
For simplicity, we bound combine them into a common bound: 
\begin{equation*}
\sum_{x \in \AD, |x| > R} |x|^4 e^{-\pi \alpha |x|^2} +\sum_{x \in \AD, |x| > R} \alpha |x|^2 e^{- \frac{\pi \alpha}{2} |x|^2}   \leq \Cc \sum_{x \in \AD, |x| > R} |x|^4 e^{- \frac{\pi \alpha}{2} |x|^2}.
\end{equation*} 
(recall that $\alpha \leq \alphaL$, and that $|x|^2 \leq \Cc |x|^4$ because even if $R = 0$, we have  $|x| \geq \xstar$). We then get:
\begin{equation}
\label{eq:EaMainLow}
\Ea(\AD + \pP) - \Ea(\AD) \geq \\ 
\cc \left(  e^{-\frac{\pi}{\alpha} \xstar^2}  - \Cc_1  \|\PP\| \sum_{x \in \AD, |x| \leq R} |x|^2 - \Cc_1 \sum_{x \in \AD, |x| > R} |x|^4 e^{- \frac{\pi \alpha}{2} |x|^2} \right) \times \SM(\PP),
\end{equation}
for some constant $\Cc_1$ independent on $\alpha \in (0, \alphaL)$. The positive term in the parenthesis comes from $\MainLow$, see \eqref{MainLowLowerbound}, and the two negative ones from the lower bounds on $\ErrorLow_2$ and $\High$.

\paragraph{Step 7. Choosing $R$.} We now take $R$ of the form $\frac{M}{\alpha}$, with $M$ “large”. By comparing the lattice sum to a Gaussian integral, we see that there exists $\Cc_0 > 0$ such that, for all $\alpha \leq \alphaL$ and all $R \geq \frac{\Cc_0}{\alpha}$:
\begin{equation*}
\sum_{x \in \AD, |x| > R} |x|^4 e^{- \frac{\pi \alpha}{2} |x|^2} \leq \Cc R^4 e^{- \frac{\pi \alpha}{2} R^2}.
\end{equation*}
We thus see that, uniformly for $M \geq \Cc_0$ and for $\alpha \in (0, \alphaL)$:
\begin{equation*}
\sum_{x \in \AD, |x| > \frac{M}{\alpha}} |x|^4 e^{- \frac{\pi \alpha}{2} |x|^2} \leq \Cc \alpha^{-4} M^4 e^{- \frac{\pi M^2}{2 \alpha}} \leq \Cc' M^4 e^{- \frac{\pi M^2}{4 \alpha}}.
\end{equation*}
Choosing $M$ large enough, we can then ensure, for all $\alpha \leq \alphaL$:
\begin{equation}
\label{HighSmall}
\Cc_1  \sum_{x \in \AD, |x| > \frac{M}{\alpha}} |x|^4 e^{- \frac{\pi \alpha}{2} |x|^2} \leq \Cc_1 \Cc' M^4 e^{- \frac{\pi M^2}{4 \alpha}} \leq \frac{1}{4} e^{-\frac{\pi}{\alpha} \xstar^2}.
\end{equation}
Moreover, we our choice $R = \frac{M}{\alpha}$, we have the upper bound:
\begin{equation}
\label{transfertootherterm}
\|\PP\| \sum_{x \in \AD, |x| \leq \frac{M}{\alpha}} |x|^2 \leq \Cc{''} \|\PP\| M^4 \alpha^{-4}.
\end{equation}
Our condition $\|\PP\| \leq \cc e^{-\frac{2\pi}{\alpha} \xstar^2}$ with $\cc$ small enough ensures that, for $\alpha \in (0, \alphaL)$:
\begin{equation}
\label{ErrorLowsmall}
\Cc_1  \|\PP\|\sum_{x \in \AD, |x| \leq R} |x|^2 \leq \cc \Cc_1 \Cc{''} M^4 \alpha^{-4} e^{-\frac{2\pi}{\alpha} \xstar^2} \leq \frac{1}{4}  e^{-\frac{\pi}{\alpha} \xstar^2}.
\end{equation}
Inserting the controls \eqref{HighSmall} and \eqref{ErrorLowsmall} into \eqref{eq:EaMainLow}, we obtain \eqref{LowerboundAlphaSmall}

\vspace{0.2cm}

We conclude the proof of Lemma \ref{lem:smallalpha} by proving \eqref{quantitativealphaSMALL_2}. We now only assume that $\|\PP\|$ satisfies \eqref{eq:pp100}.
\paragraph{Step 8. Proof of \eqref{quantitativealphaSMALL_2}}
We choose $R = 0$, so there are only “high” layers, and we use \eqref{eq:LBHighLayers}, which gives:
\begin{equation*}
\Ea(\AD + \pP) - \Ea(\AD)  \geq - \Cc \alpha \left(\sum_{x \in \AD} |x|^2 e^{- \frac{\pi \alpha}{2} |x|^2} \right) \times \SM(\PP).
\end{equation*}
For $\alpha \in (0, \alphaL)$, the sum is bounded by $\Cc \alpha^{-2}$ (this can be seen e.g. by applying Poisson's summation formula, or by comparing to a Gaussian integral), which yields \eqref{quantitativealphaSMALL_2}.

\subsection{Local optimality for c.m.s.d. functions: proof of Theorem \ref{theo:Family}}
Let $f$ be a c.m.s.d function and let $\muf$ be the associated measure on $[0, + \infty)$, such that
\begin{equation*}
f = \int_{0}^{+\infty} \Pa \dd \muf(\alpha).
\end{equation*}
In this section, we find conditions on $\muf$ ensuring the existence of $\epsilon > 0$ such that for all periodic perturbations $\PP$:
\begin{equation*}
\text{If } \|\PP\| \leq \epsilon \text{ then } \Ef(\AD + \PP) \geq \Ef(\AD).
\end{equation*}
Moreover, we observe that the choice of $\epsilon$ can be done uniformly on certain families of functions $f$, as stated in Theorem \ref{theo:Family}. The starting point is to write:
\begin{equation*}
\Ef(\AD + \PP) - \Ef(\AD) = \int_{0}^{+\infty} \left( \Ea(\AD + \PP) - \Ea(\AD) \right) \dd \muf(\alpha),
\end{equation*}
which should be quickly justified: if $f$ is a c.m.s.d function such that $x \mapsto f(|x|)$ is integrable at infinity (otherwise $\Ef(\AD)$ and $\Ef(\AD + \PP)$  are infinite, see Remark \ref{rem:integrability}) then the $\liminf$'s defining $\Ef(\AD), \Ef(\AD + \PP)$ in \eqref{def:Ef} are limits, and we simply write:
\begin{multline*}
\Ef(\AD + \PP) = \lim_{r \to \infty} \frac{1}{|(\AD + \PP) \cap \BR|} \sum_{x, y \in  (\AD + \PP) \cap \BR, x \neq y} \int_{0}^{+\infty} \Pa(|x-y|) \dd \muf(\alpha)\\ = \int_{0}^{+\infty} \left(\lim_{r \to \infty} \frac{1}{|(\AD + \PP) \cap \BR|} \sum_{x, y \in (\AD + \PP) \cap \BR, x \neq y} \Pa(|x-y|) \right) \dd \muf(\alpha).
\end{multline*}

\subsubsection*{Step 1. Selecting $\epsilon$ for the large $\alpha$'s}
Let $\alphaL$ be the threshold given by Lemma \ref{lem:largealpha}. We first focus on the contributions coming from $\alpha \geq \alphaL$.
\begin{claim}[Large $\alpha$'s]
Let $\alpha_1 \geq \alphaL$ to be chosen later, and assume that $\|\PP\| \leq \frac{1}{\alpha_1}$. We have:
\begin{multline}
\label{eq:LargeAlphaSelection}
\int_{\alphaL}^{+ \infty} \left(\Ea(\AD + \PP) - \Ea(\AD) \right) \dd \muf(\alpha) \\ \geq \left(\cc \int_{\alphaL}^{+ \infty}  e^{-\pi \alpha \xstar^2} \dd \muf(\alpha) - \Cc \int_{\alpha_1}^{+ \infty} e^{- \frac{\pi \alpha}{2} \xstar^2} \dd \muf(\alpha) \right) \FS(\PP).
\end{multline}
\end{claim}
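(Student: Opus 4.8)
## Proof proposal

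The plan is to split the integral $\int_{\alphaL}^{+\infty} (\Ea(\AD+\PP)-\Ea(\AD))\,\dd\muf(\alpha)$ at the cutoff $\alpha_1$ and apply the two-sided bounds of Lemma~\ref{lem:largealpha} on each piece. On the range $\alpha \in [\alphaL, \alpha_1]$ I would use the positive lower bound \eqref{quantitativealphaL1}, which is legitimate because the hypothesis $\|\PP\|\leq \frac{1}{\alpha_1}$ implies $\|\PP\|\leq\frac{1}{\alpha}$ for every $\alpha\leq\alpha_1$; this contributes $\cc\,e^{-\pi\alpha\xstar^2}\FS(\PP)$ pointwise in $\alpha$, and after integrating over $[\alphaL,\alpha_1]$ it gives $\cc\left(\int_{\alphaL}^{\alpha_1} e^{-\pi\alpha\xstar^2}\dd\muf(\alpha)\right)\FS(\PP)$. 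On the complementary range $\alpha > \alpha_1$ I can no longer guarantee $\|\PP\|\leq\frac1\alpha$, so I fall back on the crude lower bound \eqref{quantitativealphaL2}, valid under the standing assumption \eqref{eq:pp100}, which contributes $-\Cc\,e^{-\frac{\pi\alpha}{2}\xstar^2}\FS(\PP)$ pointwise and hence $-\Cc\left(\int_{\alpha_1}^{+\infty} e^{-\frac{\pi\alpha}{2}\xstar^2}\dd\muf(\alpha)\right)\FS(\PP)$ after integration.

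Adding the two contributions gives
\[
\int_{\alphaL}^{+\infty}\!\!\left(\Ea(\AD+\PP)-\Ea(\AD)\right)\dd\muf(\alpha)
\geq \left(\cc\!\int_{\alphaL}^{\alpha_1}\! e^{-\pi\alpha\xstar^2}\dd\muf(\alpha) - \Cc\!\int_{\alpha_1}^{+\infty}\! e^{-\frac{\pi\alpha}{2}\xstar^2}\dd\muf(\alpha)\right)\FS(\PP).
\]
To reach the stated form \eqref{eq:LargeAlphaSelection}, I would simply enlarge the first integral's range from $[\alphaL,\alpha_1]$ to $[\alphaL,+\infty)$: this only adds the nonnegative quantity $\cc\int_{\alpha_1}^{+\infty} e^{-\pi\alpha\xstar^2}\dd\muf(\alpha)\cdot\FS(\PP)$ (here $\FS(\PP)\geq 0$ by definition~\eqref{def:FS}), so the inequality is preserved. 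This yields precisely the right-hand side of \eqref{eq:LargeAlphaSelection}.

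One should also check the measurability and integrability needed to split the integral: the integrand $\alpha\mapsto \Ea(\AD+\PP)-\Ea(\AD)$ is measurable (it is a pointwise limit of the finite-range energy differences, or directly a convergent lattice sum via \eqref{eq:Ef_Periodic}), and the two pointwise bounds show it is dominated by $\muf$-integrable functions on each range provided $\Ef(\AD)<+\infty$, i.e. $\alpha\mapsto\alpha^{-1}$ is $\muf$-integrable near $0$ (Remark~\ref{rem:integrability}); the Gaussian factors $e^{-\pi\alpha\xstar^2}$ and $e^{-\frac{\pi\alpha}{2}\xstar^2}$ take care of integrability near $+\infty$. The argument here is entirely routine; the only mild subtlety — and the point where one must be careful — is keeping the constraint on $\|\PP\|$ consistent across the whole range, namely that a single bound $\|\PP\|\leq\frac{1}{\alpha_1}$ simultaneously activates the strong estimate \eqref{quantitativealphaL1} for all $\alpha\in[\alphaL,\alpha_1]$ while the weak estimate \eqref{quantitativealphaL2} covers the tail $\alpha>\alpha_1$ under the universal hypothesis \eqref{eq:pp100} alone. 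The "main obstacle" is thus not really an obstacle at this step; the real work (choosing $\alpha_1$, and matching this against the small-$\alpha$ contributions and the normalizing denominator in \eqref{condiF}) is deferred to the subsequent steps of the proof of Theorem~\ref{theo:Family}.
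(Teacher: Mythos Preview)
Your overall strategy matches the paper's exactly: split at $\alpha_1$, apply \eqref{quantitativealphaL1} on $[\alphaL,\alpha_1]$ and \eqref{quantitativealphaL2} on $(\alpha_1,+\infty)$, then reassemble. However, your final step contains a sign error. After obtaining
\[
\text{LHS} \geq \left(\cc\int_{\alphaL}^{\alpha_1} e^{-\pi\alpha\xstar^2}\dd\muf(\alpha) - \Cc\int_{\alpha_1}^{+\infty} e^{-\frac{\pi\alpha}{2}\xstar^2}\dd\muf(\alpha)\right)\FS(\PP),
\]
you ``enlarge the first integral's range to $[\alphaL,+\infty)$'' and claim this preserves the inequality because it only adds a nonnegative quantity. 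But adding a nonnegative term to the \emph{right-hand side} of a $\geq$ inequality weakens, not preserves, it: from $A\geq B$ and $C\geq 0$ you cannot conclude $A\geq B+C$.

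The fix is straightforward and is what the paper does: write
\[
\int_{\alphaL}^{\alpha_1} e^{-\pi\alpha\xstar^2}\dd\muf(\alpha)
= \int_{\alphaL}^{+\infty} e^{-\pi\alpha\xstar^2}\dd\muf(\alpha) - \int_{\alpha_1}^{+\infty} e^{-\pi\alpha\xstar^2}\dd\muf(\alpha)
\geq \int_{\alphaL}^{+\infty} e^{-\pi\alpha\xstar^2}\dd\muf(\alpha) - \int_{\alpha_1}^{+\infty} e^{-\frac{\pi\alpha}{2}\xstar^2}\dd\muf(\alpha),
\]
using $e^{-\pi\alpha\xstar^2}\leq e^{-\frac{\pi\alpha}{2}\xstar^2}$. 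The extra negative term $-\cc\int_{\alpha_1}^{+\infty} e^{-\frac{\pi\alpha}{2}\xstar^2}\dd\muf(\alpha)$ is then absorbed into the existing $-\Cc$ term (with a possibly larger constant), giving exactly \eqref{eq:LargeAlphaSelection}. Everything else in your write-up is correct.
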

\begin{proof}
We split the integral into two parts:
\begin{itemize}
   \item  For $\alpha$ between $\alphaL$ and $\alpha_1$, since we assume that $\|\PP\| \leq \frac{1}{\alpha_1} \leq \frac{1}{\alpha}$ we can use our quantitative local minimality statement \eqref{quantitativealphaL1} and integrate it over $\alpha$, which yields:
  \begin{equation*}
\int_{\alphaL}^{\alpha_1} \left(\Ea(\AD + \PP) - \Ea(\AD) \right) \dd \muf(\alpha) \geq \cc \int_{\alphaL}^{\alpha_1} e^{-\pi \alpha \xstar^2} \dd \muf(\alpha) \times \FS(\PP).
  \end{equation*}
\item If $\alpha \geq \alpha_1$, we use instead the rough lower bound \eqref{quantitativealphaL2}, which gives:
\begin{equation*}
\int_{\alpha_1}^{+ \infty}  \left(\Ea(\AD + \PP) - \Ea(\AD) \right) \dd \muf(\alpha) \geq - \Cc \int_{\alpha_1}^{+ \infty} e^{- \frac{\pi \alpha}{2} \xstar^2} \dd \muf(\alpha) \times \FS(\PP).
\end{equation*}
\end{itemize}
We also write, for simplicity: 
\begin{multline*}
\int_{\alphaL}^{\alpha_1}  e^{-\pi \alpha \xstar^2} \dd \muf(\alpha) = \int_{\alphaL}^{+ \infty}  e^{-\pi \alpha \xstar^2} \dd \muf(\alpha) - \int_{\alpha_1}^{+ \infty}  e^{-\pi \alpha \xstar^2} \dd \muf(\alpha) \\  \geq \int_{\alphaL}^{+ \infty}  e^{-\pi \alpha \xstar^2} \dd \muf(\alpha) - \int_{\alpha_1}^{+ \infty}  e^{- \frac{\pi \alpha}{2} \xstar^2} \dd \muf(\alpha).
\end{multline*}
We obtain \eqref{eq:LargeAlphaSelection}. 
\end{proof}
Next, we focus on the contributions to $\EE_f$ coming from $\alpha \leq \alphaL$.

\subsubsection*{Step 2. Selecting $\epsilon$ for the small $\alpha$'s}
\begin{claim}
\label{claim:epsilon_smallapha}
Let $0 < \alpha_0 \leq \alphaL$ to be chosen later, and assume that $\|\PP\| \leq \cc_1 e^{-\frac{2 \pi}{\alpha_0} \xstar^2}$ with $\cc_1$ the constant appearing in Lemma \ref{lem:smallalpha}. We have:
\begin{equation}
\label{eq:epsilon_smallapha}
\int_{0}^{\alphaL} \left(\Ea(\AD + \PP) - \Ea(\AD) \right) \dd \muf(\alpha) \geq \left(\cc_1 \int_{0}^{\alphaL} e^{-\frac{\pi}{\alpha} \xstar^2} \dd \muf(\alpha) - \Cc \int_{0}^{\alpha_0}  e^{-\frac{\pi}{2\alpha} \xstar^2} \dd \muf(\alpha) \right) \times \SM(\PP).
\end{equation}
\end{claim}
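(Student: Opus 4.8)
The plan is to repeat, almost verbatim, the large-$\alpha$ argument leading to \eqref{eq:LargeAlphaSelection}, now splitting the integral over $(0,\alphaL)$ at the threshold $\alpha_0$ rather than at $\alpha_1$:
\[
\int_{0}^{\alphaL}\!\left(\Ea(\AD+\PP)-\Ea(\AD)\right)\dd\muf(\alpha)=\int_{\alpha_0}^{\alphaL}\!\left(\Ea(\AD+\PP)-\Ea(\AD)\right)\dd\muf(\alpha)+\int_{0}^{\alpha_0}\!\left(\Ea(\AD+\PP)-\Ea(\AD)\right)\dd\muf(\alpha).
\]
(As in Remark~\ref{rem:integrability}, we may assume $\Ef(\AD)<+\infty$, so $\alpha\mapsto\alpha^{-1}$ is $\muf$-integrable near $0$ and all the lower bounds below are $\muf$-integrable.) On the range $\alpha\in[\alpha_0,\alphaL]$ the key point is that $\alpha\mapsto e^{-\frac{2\pi}{\alpha}\xstar^2}$ is \emph{increasing}, so the hypothesis $\|\PP\|\leq\cc_1 e^{-\frac{2\pi}{\alpha_0}\xstar^2}$ implies $\|\PP\|\leq\cc_1 e^{-\frac{2\pi}{\alpha}\xstar^2}$ for every such $\alpha$; hence the sharp lower bound \eqref{LowerboundAlphaSmall} of Lemma~\ref{lem:smallalpha} applies pointwise in $\alpha$, and integrating it against $\muf$ gives
\[
\int_{\alpha_0}^{\alphaL}\!\left(\Ea(\AD+\PP)-\Ea(\AD)\right)\dd\muf(\alpha)\geq\cc_1\int_{\alpha_0}^{\alphaL}e^{-\frac{\pi}{\alpha}\xstar^2}\dd\muf(\alpha)\times\SM(\PP).
\]

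On the range $\alpha\in(0,\alpha_0)$ the sharp estimate is no longer available — the size of $\PP$ it would require is exponentially smaller still — so I would fall back on the rough two-sided bound \eqref{quantitativealphaSMALL_2}, which holds under the standing assumption \eqref{eq:pp100} alone; integrating it against $\muf$ gives
\[
\int_{0}^{\alpha_0}\!\left(\Ea(\AD+\PP)-\Ea(\AD)\right)\dd\muf(\alpha)\geq-\Cc\int_{0}^{\alpha_0}\alpha^{-1}\dd\muf(\alpha)\times\SM(\PP).
\]
Finally I would re-expand the positive main term over the full range exactly as in the last display of the large-$\alpha$ claim: using $e^{-\frac{\pi}{\alpha}\xstar^2}\leq e^{-\frac{\pi}{2\alpha}\xstar^2}$,
\[
\int_{\alpha_0}^{\alphaL}e^{-\frac{\pi}{\alpha}\xstar^2}\dd\muf(\alpha)\geq\int_{0}^{\alphaL}e^{-\frac{\pi}{\alpha}\xstar^2}\dd\muf(\alpha)-\int_{0}^{\alpha_0}e^{-\frac{\pi}{2\alpha}\xstar^2}\dd\muf(\alpha).
\]
Summing the three contributions yields \eqref{eq:epsilon_smallapha}, the two small-$\alpha$ error terms — $\int_0^{\alpha_0}e^{-\frac{\pi}{2\alpha}\xstar^2}\dd\muf$ coming from the re-expansion and $\int_0^{\alpha_0}\alpha^{-1}\dd\muf$ from the rough bound — being merged into one; since on $(0,\alpha_0)$ one has $e^{-\frac{\pi}{2\alpha}\xstar^2}\leq 1\leq\alphaL\,\alpha^{-1}$, the operative error is really $\int_0^{\alpha_0}\alpha^{-1}\dd\muf$, which is precisely the quantity that reappears in the hypothesis \eqref{condiF}.

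The proof is essentially bookkeeping: the only genuinely new input compared with Step~1 is the monotonicity observation that makes a single smallness hypothesis on $\|\PP\|$ serve simultaneously for all $\alpha\in[\alpha_0,\alphaL]$. The delicate point — and the reason for the precise form of \eqref{condiF} in Theorem~\ref{theo:Family} — is the tail $\alpha\in(0,\alpha_0)$, where the quantitative estimate degrades down to $-\Cc\alpha^{-1}\SM(\PP)$; consequently the overall scheme only closes once $\alpha_0$ is taken small enough that $\int_0^{\alpha_0}\alpha^{-1}\dd\muf$ is negligible against the barrier $\int_0^{\alphaL}e^{-\frac{\pi}{\alpha}\xstar^2}\dd\muf$, which is exactly the small-$\alpha$ content of \eqref{condiF}.
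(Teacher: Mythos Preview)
Your proof is correct and follows the paper's own argument essentially verbatim: the same split at $\alpha_0$, the same monotonicity observation to push the hypothesis $\|\PP\|\leq\cc_1 e^{-\frac{2\pi}{\alpha_0}\xstar^2}$ through to all $\alpha\in[\alpha_0,\alphaL]$, the same application of \eqref{LowerboundAlphaSmall} and \eqref{quantitativealphaSMALL_2} on the two ranges, and the same re-expansion of the main term. You are also right that the operative error term is $\int_0^{\alpha_0}\alpha^{-1}\dd\muf$ rather than the $\int_0^{\alpha_0}e^{-\frac{\pi}{2\alpha}\xstar^2}\dd\muf$ written in the displayed claim --- indeed the paper's own proof delivers the former, and it is the former that is carried into \eqref{LBFamily} and \eqref{condiF}.
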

\begin{proof}
We split again the integral into two parts:
\begin{itemize}
  \item If $\alpha$ is between $\alpha_0$ and $\alphaL$, since we assume that $\|\PP\| \leq \cc_1 e^{-\frac{2\pi}{\alpha_0} \xstar^2} \leq \cc_1 e^{-\frac{2\pi}{\alpha} \xstar^2}$, we can use our quantitative local minimality bound \eqref{LowerboundAlphaSmall}, which yields:
\begin{equation*}
\int_{\alpha_0}^{\alphaL} \left(\Ea(\AD + \PP) - \Ea(\AD) \right) \dd \muf(\alpha) \geq \cc_1 \int_{\alpha_0}^{\alphaL} e^{-\frac{\pi}{\alpha} \xstar^2} \dd \muf(\alpha) \times \FS(\PP).
\end{equation*}
\item If $\alpha$ is smaller than $\alpha_0$, we use instead the rough lower bound \eqref{quantitativealphaSMALL_2}, which gives:
\begin{equation*}
\int_{0}^{\alpha_0} \left(\Ea(\AD + \PP) - \Ea(\AD) \right) \dd \muf(\alpha) \geq - \Cc \int_{0}^{\alpha_0}  \alpha^{-1} \dd \muf(\alpha) \times \FS(\PP).
\end{equation*}
\end{itemize}
Again, for simplicity, we write:
\begin{multline*}
\int_{\alpha_0}^{\alphaL} e^{-\frac{\pi}{\alpha} \xstar^2} \dd \muf(\alpha) = \int_{0}^{\alphaL} e^{-\frac{\pi}{\alpha} \xstar^2} \dd \muf(\alpha) - \int_{0}^{\alpha_0} e^{-\frac{\pi}{\alpha} \xstar^2} \dd \muf(\alpha) \\ \geq \int_{0}^{\alphaL} e^{-\frac{\pi}{\alpha} \xstar^2} \dd \muf(\alpha) - \int_{0}^{\alpha_0} e^{-\frac{\pi}{2 \alpha} \xstar^2} \dd \muf(\alpha) \geq  \int_{0}^{\alphaL} e^{-\frac{\pi}{\alpha} \xstar^2} \dd \muf(\alpha) - \int_{0}^{\alpha_0} \alpha^{-1} \dd \muf(\alpha),
\end{multline*}
and we obtain \eqref{eq:epsilon_smallapha}.
\end{proof}

\subsubsection*{Step 3. Conclusion}
In summary, for all $\alpha_0 \leq \alphaL \leq \alpha_1$, assuming that
\begin{equation}
\label{epsilonpourPP}
\|\PP\| \leq \min\left(\frac{1}{\alpha_1}, \cc_1 e^{-\frac{2\pi}{\alpha_0} \xstar^2}\right),
\end{equation}
we obtain the following lower bound on $\Ef(\AD + \PP) - \Ef(\AD)$ (recall that $\SM(\PP)$ controls $\FS(\PP)$ by \eqref{G0Gxrho}):
\begin{multline}
\label{LBFamily}
\Ef(\AD + \PP) - \Ef(\AD) \geq \Bigg(\cc \left(\int_{\alphaL}^{+ \infty}  e^{-\pi \alpha \xstar^2} \dd \muf(\alpha) + \int_{0}^{\alphaL} e^{-\frac{\pi}{\alpha} \xstar^2} \dd \muf(\alpha)\right) \\ - \Cc \left(\int_{\alpha_1}^{+ \infty} e^{-\frac{\pi\alpha}{2} \xstar^2} \dd \muf(\alpha) + \int_{0}^{\alpha_0}  \alpha^{-1} \dd \muf(\alpha)\right) \Bigg) \times \FS(\PP).
\end{multline}
We make two observations:
\begin{enumerate}
  \item By definition, for all $r > 0$, the integral $\int_{0}^{+ \infty}  e^{-\pi \alpha r^2} \dd \muf(\alpha) = f(r)$ is finite, so in particular $\alpha \mapsto e^{-\frac{\pi \alpha}{2} \xstar^2}$ is $\muf$-integrable at infinity.
  \item The condition $\Ef(\AD) < + \infty$ is equivalent to $r \mapsto r f(r)$ being integrable at infinity, see Remark \ref{rem:integrability}. A direct computation shows that:
  \begin{equation}
  \label{TwoInteg}
\int_{1}^{+ \infty} rf(r) \dd r=  \int_{0}^{+ \infty}  \int_{1}^{+ \infty} r e^{-\pi \alpha r^2} \dd r \ \dd \muf(\alpha) = \int_{0}^{+ \infty} \alpha^{-1} \int_{\sqrt{\alpha}}^{+ \infty} e^{- \pi u^2} \dd u \ \dd \muf(\alpha),
  \end{equation}
  and thus this finiteness condition implies that $\alpha \mapsto \alpha^{-1}$ must be $\muf$-integrable near $0$.
\end{enumerate}
As a consequence, by choosing $\alpha_1$ large enough and $\alpha_0$ small enough (depending on $f$), we can ensure that the parenthesis in \eqref{LBFamily} is positive. Taking $\epsilon$ as the right-hand side of \eqref{epsilonpourPP}, and imposing $\|\PP\| \leq \epsilon$, we deduce that $\AD$ is locally optimal for $\Ef$. This proves the first statement in Theorem \ref{theo:Family}.

Moreover, we see that $\alpha_0, \alpha_1$ can be chosen uniformly over families $\FF$ of c.m.s.d functions $f$ such that:
\begin{equation*}
\lim_{\alpha_0 \to 0, \alpha_1 \to \infty} \frac{\int_{\alpha_1}^{+ \infty} e^{- \frac{\pi \alpha}{2} \xstar^2} \dd \muf(\alpha) + \int_{0}^{\alpha_0} \alpha^{-1} \dd \muf(\alpha)}{\int_{\alphaL}^{+ \infty}  e^{-\pi \alpha \xstar^2} \dd \muf(\alpha) + \int_{0}^{\alphaL} e^{-\frac{\pi}{\alpha} \xstar^2} \dd \muf(\alpha)} = 0 \text{ uniformly for $f \in \FF$.}
\end{equation*}
Here $\alphaL \geq 1$ is the threshold from Lemma \ref{lem:largealpha}, for which we never searched an explicit value. For simplicity, we replace it by $1$ in the integral bounds, which is valid because:
\begin{equation*}
 \int_{1}^{+ \infty}  e^{-\pi \alpha \xstar^2} \dd \muf(\alpha) + \int_{0}^{1} e^{-\frac{\pi}{\alpha} \xstar^2} \dd \muf(\alpha) \leq  \Cc \left(\int_{\alphaL}^{+ \infty}  e^{-\pi \alpha \xstar^2} \dd \muf(\alpha) + \int_{0}^{\alphaL} \alpha^{-1} \dd \muf(\alpha)  \right) ,
\end{equation*}
indeed, $e^{-\pi \alpha \xstar^2}$, $e^{-\frac{\pi}{\alpha} \xstar^2}$ are comparable for $\alpha \in [1, \alphaL]$. This concludes the proof of Theorem \ref{theo:Family}.

\clearpage

\appendix
\section{Reduction to periodic perturbations: proof of Lemma \ref{lem:Station}}
\label{sec:proof_periodic}
Let $f$ be a c.m.s.d function - in particular, $f$ is smooth and decreasing. Without loss of generality, we can assume that $x \mapsto f(|x|)$ (or equivalently $r \mapsto r f(r)$) is integrable at $\infty$, otherwise both $\Ef(\AD)$ and $\Ef(\AD + \PP)$ are infinite and there is nothing to prove.

For $N \geq 1$, let $\La_N$ be the sub-lattice of $\AD$ generated by the vectors $2N \sigma$ and $2N \tau$, and let $\LAN$ be a fundamental domain (here a parallelogram) for $\La_N$ given by $\LAN := \left\lbrace s N \sigma + t N \tau, \ (s,t) \in [-1,1]^2 \right\rbrace$. 

The sequence of shapes $(\LAN)_{N \geq 1}$ converges to $\R^2$ in the sense of Van Hove, see \cite[Sec.~3.2.1]{friedli2017statistical}. It is folklore that the following limit exists (see e.g. \cite[Sec. 2.4]{blanc2015crystallization}):
\begin{equation*}
\lim_{r \to \infty} \frac{1}{|(\AD + \PP) \cap \BR|} \sum_{x, y \in (\AD + \PP) \cap \BR, x \neq y} f(|x-y|),
\end{equation*}
cf. \eqref{def:Ef}, and coincides with the same limit taken along the sequence $(\LAN)_{N \geq 1}$, thus we have:
\begin{equation}
\label{limLN}
\Ef(\AD + \pP) = \lim_{N \to \infty} \frac{1}{|(\AD + \pP) \cap \LAN|} \sum_{x, y \in (\AD + \pP)  \cap \LAN, \ x \neq y} f(|x-y|).
\end{equation}
In particular, we can take $N$ large enough such that:
\begin{equation}
\label{VH1} 
\frac{1}{|(\AD + \pP) \cap \Lambda_{N}|} \sum_{x, y \in (\AD + \pP)  \cap \Lambda_{N}, x \neq y} f(|x-y|) \leq \Ef(\AD + \pP) + \delta.
\end{equation}
We restrict $\PP$ to the parallelogram $\Lambda_{N}$, and define $\PP_{\per}$ as the $\La_{N}$-periodic extension of $\PP$ to $\AD$. Of course, we have $\|\PP_{\per}\| \leq \|\PP\|$. It remains to prove that $\Ef(\AD + \PP_{\per})$ is not much larger than $\Ef(\AD + \PP)$.

Consider the sub-sequence of shapes $(\Lambda_{kN})_{k \geq 1}$. For the same reason as above, we have:
\begin{equation}
\label{VH2}
\Ef(\AD + \PP_{\per}) = \lim_{k \to \infty}  \frac{1}{|(\AD + \PPer) \cap \Lambda_{kN}|} \sum_{x, y \in (\AD + \PPer)  \cap \Lambda_{kN}, \ x \neq y} f(|x-y|).
\end{equation}
Notice that $\Lambda_{kN}$ is the disjoint union of $k^2$ disjoint copies of $\LAN$. Since $\PPer$ is periodic by construction, and coincides with $\PP$ on $\Lambda_{N}$, we have:
\begin{equation*}
|(\AD + \PPer) \cap \Lambda_{kN}| = k^2 |(\AD + \PP) \cap \Lambda_{N}|.
\end{equation*}
Moreover, the energy within $\Lambda_{kN}$ is given by $k^2$ times the energy within $\Lambda_{N}$, plus the interactions between each copy of $\LAN$ and the rest of the configuration. Since $f$ is decreasing and $x \mapsto f(|x|)$ is integrable at infinity, we have, for $x \in \Lambda_{N}$
\begin{equation*}
\sum_{y \in (\AD + \PP_{\per}) \setminus \Lambda_{N}} |f(x-y)| = o_{\dist(x, \partial \Lambda_{N}) \to \infty}(1),
\end{equation*}
and thus we can bound the interaction between each copy and the rest of the configuration by:
\begin{equation*}
\sum_{x \in (\AD + \PPer) \cap \Lambda_{N}, y \in (\AD + \PP_{\per}) \setminus \Lambda_{N}} \left|f(|x-y|)\right| = o\left(|\Lambda_{N}|\right),
\end{equation*}
using the fact that boundary contributions are negligible with respect to the volume. In particular, choosing $N$ large enough, we have:
\begin{equation*}
\frac{1}{|\Lambda_{N}|} \sum_{x \in (\AD + \PP_{\per})  \cap \Lambda_{N}, y \in (\AD + \PP_{\per}) \setminus \Lambda_{N}} |f(x-y)| \leq \delta,
\end{equation*}
which in particular implies for all $k \geq 1$:
\begin{equation*}
 \sum_{x, y \in (\AD + \PPer)  \cap \Lambda_{kN}, \ x \neq y} f(|x-y|) \leq k^2 \times  \sum_{x, y \in (\AD + \PP)  \cap \Lambda_{N}, \ x \neq y} f(|x-y|) +  k^2 \delta.
\end{equation*}
Dividing by $|(\AD + \PPer) \cap \Lambda_{kN}| = k^2 |(\AD + \PP) \cap \Lambda_{N}|$, inserting \eqref{VH1}, and sending $k \to \infty$ (see \eqref{VH2}) yields the result.

\clearpage

\section{Proof of the “geometric” Lemma \ref{lem:geomwk}}
\label{sec:proof_of_lemma_ref_lem_geomwk}
Without loss of generality, we assume that $S$ has sidelength $r = 1$.  For $k,v$ fixed, we want to prove: 
\begin{equation*}
\sum_{s \in S} w_s(k) |s \cdot v|^2 \geq \frac{1}{4} \left(\sum_{s \in S} w_s(k)\right) |v|^2, \quad \text{ with } w_s(k) := 2 \left(1 - \cos\left(2 \pi k \cdot s\right) \right).
\end{equation*}
This can be rewritten in “linear algebraic” terms as:
\begin{equation}
\label{eq:average2designM}
\Mm v \cdot v \geq \frac{1}{4} |v|^2, \quad \text{ where } \Mm := \frac{1}{\sum_{s \in S} w_s(u)} \sum_{s \in S} w_s(k) s s^{T}.
\end{equation}
The matrix $\Mm$ is positive symmetric as a mixture of orthogonal projections. We can write it down explicitly using the fact that the vertices of $S$ are given by:
\begin{equation*}
s_1 := (1,0), \quad s_2 := \left(\frac{1}{2}, \frac{\sqrt{3}}{2}\right), \quad s_3 = s_2 - s_1 = \left(-\hal, \frac{\sqrt{3}}{2}\right), 
\end{equation*}
together with their opposites, which yield the same contributions.
We obtain:
\begin{equation*}
\Mm = \frac{1}{w_1 + w_2 + w_3} \begin{pmatrix}
w_1+\tfrac{w_2+w_3}{4} & \tfrac{\sqrt3}{4}(w_2-w_3)\\[1mm]
\tfrac{\sqrt3}{4}(w_2-w_3) & \tfrac{3}{4}(w_2+w_3)
\end{pmatrix}
\end{equation*}
where we write $w_1, w_2, w_3$ instead of $w_{s_1}(k), w_{s_2}(k), w_{s_3}(k)$. Clearly, $\Tr \Mm = 1$, and $\det \Mm$ is given by
\begin{equation}
\label{detM}
\det \Mm = \frac{3}{4} \frac{w_1 w_2 + w_2w_3 + w_1w_3}{(w_1 + w_2 + w_3)^2}.
\end{equation}
An elementary computation shows that the smallest eigenvalue $\lambda_{\min}$ of $\Mm$ is then equal to 
\begin{equation*}
\lambda_{\min} = \frac{1 - \sqrt{1 - 4 \det \Mm}}{2}.
\end{equation*}
Proving that $\lambda_{\min} \geq \frac{1}{4}$ will imply \eqref{eq:average2designM}. By our expression \eqref{detM} for $\det \Mm$, this is equivalent to having:
\begin{equation}
\frac{w_1 w_2 + w_2w_3 + w_1w_3}{(w_1 + w_2 + w_3)^2} \geq \frac{1}{4}.
\end{equation}
Such an inequality is of course false if $w_1, w_2, w_3$ are arbitrary positive numbers, because one of them could be much larger than the other two, making the ratio very small. We now claim that since $s_3 = s_2 - s_1$, there cannot be one term among $w_1, w_2, w_3$ which is much larger than the other two. 
\begin{claim}
We have:
\begin{equation}
\label{eq:w12314}
w_1 w_2 + w_2w_3 + w_1w_3 \geq \frac{1}{4} (w_1 + w_2 + w_3)^2.
\end{equation}
\end{claim}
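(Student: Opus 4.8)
The plan is to eliminate the trigonometry by a half-angle substitution, reduce \eqref{eq:w12314} to a symmetric inequality in three nonnegative variables, and then identify that inequality with Heron's formula for a triangle whose side lengths are absolute values of sines.

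First I would set $\theta_i := 2\pi k \cdot s_i$ for $i = 1, 2, 3$, noting that $\theta_3 = \theta_2 - \theta_1$ because $s_3 = s_2 - s_1$, and use $1 - \cos\theta = 2\sin^2(\theta/2)$ to write $w_i = 4\sin^2(\theta_i/2)$. Putting $x := \theta_1/2$, $y := \theta_2/2$, and $p := \sin^2 x$, $q := \sin^2 y$, $u := \sin^2(x-y)$, a one-line computation (divide \eqref{eq:w12314} by $16$, then expand the square on the right) turns the claim into
\begin{equation*}
2(pq + qu + up) \geq p^2 + q^2 + u^2 .
\end{equation*}
Here $x, y$ may be taken to be arbitrary real numbers, since $k \mapsto (k \cdot s_1, k \cdot s_2)$ is an isomorphism of $\R^2$ (the vectors $s_1, s_2$ are linearly independent).

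Next I would write $p = A^2$, $q = B^2$, $u = C^2$ with $A := |\sin x|$, $B := |\sin y|$, $C := |\sin(x-y)|$, and invoke the Heron-type identity
\begin{equation*}
2(A^2B^2 + B^2C^2 + C^2A^2) - (A^4 + B^4 + C^4) = (A+B+C)(-A+B+C)(A-B+C)(A+B-C),
\end{equation*}
so that the target inequality is equivalent to the right-hand side being nonnegative. Since $A, B, C \geq 0$, the first factor is nonnegative; moreover any two of the last three factors add up to $2A$, $2B$, or $2C$, hence are not simultaneously negative, so at most one of the three can be negative. It therefore suffices to rule out a negative factor, i.e.\ to establish the three triangle inequalities $A \leq B + C$, $B \leq C + A$, $C \leq A + B$. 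These I would read off directly from the sine addition formula: $\sin x = \sin(x-y)\cos y + \cos(x-y)\sin y$ gives $A \leq C + B$; $\sin y = \sin(y-x)\cos x + \cos(y-x)\sin x$ gives $B \leq C + A$; and $\sin(x-y) = \sin x \cos y - \cos x \sin y$ gives $C \leq A + B$. Together with the identity above this yields \eqref{eq:w12314}.

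I do not anticipate a genuine obstacle. The only step that requires a moment's care is the sign analysis of the Heron product — verifying that ``at most one of the three factors is negative'' really forces the product to be nonnegative — and this is precisely the point where one needs the full triangle inequalities rather than merely the positivity of the individual $w_i$, which (as noted right before the claim) would not be enough, since \eqref{eq:w12314} fails for arbitrary positive $w_1, w_2, w_3$.
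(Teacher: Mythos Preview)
Your proof is correct. The paper takes a different algebraic route: it rewrites the inequality in terms of $c_j = \cos\theta_j$, substitutes $c_3 = c_1 c_2 + \sin\theta_1\sin\theta_2$ (coming from $s_3 = s_2 - s_1$), and after some manipulation factors the difference $w_1w_2+w_2w_3+w_1w_3 - \tfrac14(w_1+w_2+w_3)^2$ exactly as $16\sin^2(\theta_1/2)\sin^2(\theta_2/2)\sin^2((\theta_1-\theta_2)/2)$, which is manifestly nonnegative --- in your notation this is $16A^2B^2C^2$. Your Heron route instead expresses the same difference as $4(A+B+C)(-A+B+C)(A-B+C)(A+B-C)$ and argues nonnegativity via the triangle inequalities for $|\sin x|,|\sin y|,|\sin(x-y)|$. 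Both are clean; the paper's product-of-squares form makes nonnegativity immediate, while yours makes the geometric content explicit and shows precisely \emph{why} the relation $s_3 = s_2 - s_1$ matters: it is what forces $A,B,C$ to satisfy the triangle inequalities, which an arbitrary triple of positive $w_i$ would not. One minor remark: the ``at most one of the three factors can be negative'' observation is redundant once you establish the three triangle inequalities directly, since those already show that none of the factors is negative.
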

\begin{proof}
For $j \in \{1, 2, 3\}$ let $\theta_j := 2\pi s_j \cdot k$ and let $c_j := \cos \theta_j$. We have by definition:
\begin{equation*}
w_j = 2\left(1 - c_j\right) = 4 \sin^2\left( \frac{\theta_j}{2} \right).
\end{equation*}
The inequality \eqref{eq:w12314} can then be re-written, after some algebra, as:
\begin{equation}
\label{ineq:c123}
4 \left( c_1 c_2 + c_2 c_3 + c_3 c_1  \right) - \left(c_1 + c_2 + c_3\right)^2  - 2 \left(c_1 + c_2 + c_3\right) + 3 \geq 0.
\end{equation}
Introduce the auxiliary variable $t := \sin \left(\theta_1\right) \sin \left(\theta_2\right)$. Since $s_3 = s_2-s_1$ as vectors, we have:
\begin{equation*}
c_3 = \cos\left(\theta_3\right) = \cos\left( 2\pi s_3 \cdot k \right) = \cos\left( 2\pi (s_2-s_1)\cdot k \right) = \cos(\theta_2 - \theta_1) = c_1 c_2 + \sin(\theta_1) \sin(\theta_2) = c_1 c_2 + t.
\end{equation*}
Injecting this, and noticing that $t^2 = (1-c_1^2)(1-c_2^2)$, we can re-write the left-hand side of \eqref{ineq:c123} as:
\begin{multline}
\label{c123_2}
4 \left( c_1 c_2 + c_2 c_3 + c_3 c_1  \right) - \left(c_1 + c_2 + c_3\right)^2  - 2 \left(c_1 + c_2 + c_3\right) + 3 \\
= 4 \left(1 - c_1\right)\left(1-c_2\right) - \left(t + (1-c_1)(1-c_2)  \right)^2.
\end{multline}
Finally, some elementary trigonometry yields:
\begin{equation*}
\left(1 - c_1\right)\left(1-c_2\right) = 4 \sin^2\left(\frac{\theta_1}{2} \right) \sin^2\left(\frac{\theta_2}{2} \right), \quad t + (1-c_1)(1-c_2) = 4 \sin\left(\frac{\theta_1}{2} \right) \sin\left(\frac{\theta_2}{2} \right)\cos\left(\frac{\theta_1 - \theta_2}{2} \right).
\end{equation*}
The inequality \eqref{ineq:c123} expresses that the right-hand side of \eqref{c123_2} is non-negative, and is thus equivalent to:
\begin{equation*}
16 \sin^2\left(\frac{\theta_1}{2} \right) \sin^2\left(\frac{\theta_2}{2} \right) -  16 \sin^2\left(\frac{\theta_1}{2} \right) \sin^2\left(\frac{\theta_2}{2} \right)\cos^2\left(\frac{\theta_1 - \theta_2}{2} \right) \geq 0,
\end{equation*}
which is true, and concludes the proof of the claim.
\end{proof}
Now, the inequality \eqref{eq:w12314} being satisfied, we get \eqref{eq:average2designM}, which yields the result.

\section{Proof of Proposition \ref{prop:minimality}}
\label{sec:proof_minimality}
Recall that $v$ is a fixed unit vector and that $\Psiav$ is defined on the fundamental domain $\HH$ of $\AD$ by:
\begin{equation}
\label{PsiaOri}
\Psiav : u \mapsto \sum_{x \in \AD} |(x+u) \cdot v|^2 e^{- \frac{\pi}{\alpha} |x+u|^2}.
\end{equation}
We want to show that $\Psia$ is minimal at $0$, with a lower bound of the form $\Psiav(u) - \Psiav(0) \geq \cc e^{- \frac{\pi}{\alpha} \xstar^2}  |u|^2$.
\begin{figure}[h!]
\begin{center}
\begin{tikzpicture}[scale=1.3]
  \pgfmathsetmacro{\xstar}{sqrt(2/sqrt(3))}
  \pgfmathsetmacro{\R}{\xstar/sqrt(3)}            
  \pgfmathsetmacro{\hx}{0.5*\xstar}
  \pgfmathsetmacro{\hy}{0.5*\xstar/sqrt(3)}

  \coordinate (A1) at ( 0,  \R);
  \coordinate (A2) at (-\hx, \hy);
  \coordinate (A3) at (-\hx,-\hy);
  \coordinate (A4) at ( 0, -\R);
  \coordinate (A5) at ( \hx,-\hy);
  \coordinate (A6) at ( \hx, \hy);

  \draw[fill=gray!20,thick] (A1)--(A2)--(A3)--(A4)--(A5)--(A6)--cycle;
  \node at (0,0.2) {$\HH$};

  \draw[-,very thick] (A1) -- (A2)
       node[midway,above=1pt] {$\tfrac{r_\star}{\sqrt3}$};

  \foreach \px/\py in {
      1/0,  0.5/0.866025403784,
     -0.5/0.866025403784, -1/0,
     -0.5/-0.866025403784, 0.5/-0.866025403784}
    { \fill (\px*\xstar,\py*\xstar) circle(0.02); }

  \fill (0,0) circle(0.02) node[below left=-2pt] {$0$};
\end{tikzpicture}
\caption{A close-up of the fundamental hexagon $\HH$, surrounded by the first “shell” of the lattice.}
\end{center} 
\end{figure}
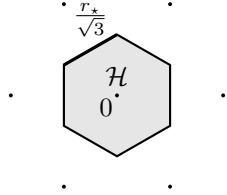
\newcommand{\ycrit}{y_{\mathrm{crit}}}
\newcommand{\tmax}{t_{\max}}
\newcommand{\Kmin}{K_{\min}}
\newcommand{\ymax}{y_{\max}}
\newcommand{\muc}{\bar{\mu}}

We will use the following numerical constants: 
 \begin{equation*}
\xstar = \sqrt{\frac{2}{\sqrt{3}}} \approx 1.0746, \quad \alphac := 0.552.
 \end{equation*}
We distinguish between two ranges for $\alpha$: “large” (between $\alphac$ and $\alphaL$) and “small” (between $0$ and $\alphac$).
\subsection{First case: \texorpdfstring{$\alpha$ large}{alpha large}.} 
\begin{lemma}
Let $\alphaL$ be fixed. There exists $\cc > 0$ such that for all $\alpha \in (\alphac, \alphaL)$, for all unit vector $v$, and all $u$ in $\HH$, we have: 
\begin{equation}
\label{210large}
  \Psiav(u) - \Psiav(0) \geq \cc e^{- \frac{\pi}{\alpha} \xstar^2}  |u|^2.
\end{equation}
\end{lemma}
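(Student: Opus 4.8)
We may assume $\alphaL \ge \alphac$. Since $\alpha \mapsto e^{-\frac{\pi}{\alpha}\xstar^2}$ is continuous and stays between two positive constants on the compact interval $[\alphac,\alphaL]$, it is enough to produce $\cc>0$ with
\begin{equation*}
\Psiav(u) - \Psiav(0) \ge \cc\,|u|^2 \qquad \text{for all } \alpha \in [\alphac,\alphaL],\ |v|=1,\ u \in \HH.
\end{equation*}
Note that $\Psiav$ is smooth, $\AD$-periodic, and even in $u$ (substitute $x \mapsto -x$), so $\nabla_u\Psiav(0)=0$; moreover $(\alpha,v,u)\mapsto\Psiav(u)$ and all its $u$-derivatives are jointly continuous, the defining lattice sums converging uniformly for $\alpha$ in the compact interval and $u$ in a neighbourhood of $\HH$, which gives in particular a uniform bound on $\|\nabla^3_u\Psiav\|_\infty$. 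With this, a routine compactness argument reduces the claim to two \emph{pointwise} statements, to be verified for each fixed $\alpha_\ast\in[\alphac,\alphaL]$ and each unit vector $v_\ast$: (a) $\Psi_{\alpha_\ast,v_\ast}(u) > \Psi_{\alpha_\ast,v_\ast}(0)$ for every $u\in\HH\setminus\{0\}$; and (b) $\Hess_u\Psi_{\alpha_\ast,v_\ast}(0)$ is positive definite. Indeed, were the uniform bound false, one could pick $(\alpha_n,v_n,u_n)$ with $\Psi_{\alpha_n,v_n}(u_n)-\Psi_{\alpha_n,v_n}(0)<\tfrac1n|u_n|^2$ and, along a subsequence, $\alpha_n\to\alpha_\ast\in[\alphac,\alphaL]$, $v_n\to v_\ast$, $u_n\to u_\ast\in\HH$: if $u_\ast\neq0$ this contradicts (a) by continuity, while if $u_\ast=0$ a second-order Taylor expansion with remainder controlled uniformly by $\|\nabla^3_u\Psiav\|_\infty$ forces $\tfrac12\Hess_u\Psi_{\alpha_\ast,v_\ast}(0)(\hat u_\ast,\hat u_\ast)\le0$, where $\hat u_\ast$ is a subsequential limit of $u_n/|u_n|$, contradicting (b).

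For (b), the key input is that a regular hexagon is a spherical $5$-design (Remark \ref{rem:5design}): summing shell by shell, any polynomial in $x$ of degree $\le 4$ averages as over the circle, so the lattice sums occurring in $\Hess_u\Psiav(0)$ become isotropic tensors, namely $\sum_{x}x_ax_b\,e^{-\frac{\pi}{\alpha}|x|^2} = C_1(\alpha)\,\delta_{ab}$ and $\sum_{x}x_ax_bx_cx_d\,e^{-\frac{\pi}{\alpha}|x|^2} = C_2(\alpha)(\delta_{ab}\delta_{cd}+\delta_{ac}\delta_{bd}+\delta_{ad}\delta_{bc})$, with $C_1(\alpha)=\sum_x x_1^2 e^{-\frac{\pi}{\alpha}|x|^2}$ and $C_2(\alpha)=\sum_x x_1^2x_2^2 e^{-\frac{\pi}{\alpha}|x|^2}$. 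Substituting collapses the Hessian to the explicit form $\Hess_u\Psiav(0)=\lambda(\alpha)\,\id+\mu(\alpha)\,vv^{T}$, with $\lambda,\mu$ convergent lattice sums not depending on $v$ (for instance $\lambda(\alpha)=\tfrac{2\pi}{\alpha}\bigl(\tfrac{2\pi}{\alpha}C_2(\alpha)-C_1(\alpha)\bigr)$). Since the eigenvalues are $\lambda(\alpha)$ (on $v^{\perp}$) and $\lambda(\alpha)+\mu(\alpha)$ (on $\R v$), positive definiteness is equivalent to the two scalar inequalities $\lambda(\alpha)>0$ and $\lambda(\alpha)+\mu(\alpha)>0$ on $[\alphac,\alphaL]$; these I would establish by truncating the sums to a few shells with an explicit Gaussian tail bound for the remainder and checking the resulting finite inequalities, using monotonicity/continuity in $\alpha$ to cover the whole interval — this is where the numerical threshold $\alphac$ enters.

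The main obstacle is (a): that the centre of the hexagon is the \emph{strict} global minimum of $\Psiav$ on $\HH$, for all $\alpha\in[\alphac,\alphaL]$ and all unit $v$. I expect this to be the cumbersome part of the argument. A plausible route: using the point symmetries of $\HH$ and the symmetries of $\AD$ acting on $v$, reduce to $u$ in a small closed wedge of $\HH$; on such a wedge bound $\Psiav(u)-\Psiav(0)$ below by the nonnegative origin term $|u\cdot v|^2 e^{-\frac{\pi}{\alpha}|u|^2}$ together with the first few shells — whose shifted-minus-unshifted contributions can be signed or estimated from the convexity and monotonicity of $r\mapsto e^{-\frac{\pi}{\alpha}r^2}$ along the relevant segments — while absorbing the remaining shells into a crude exponentially small tail (the bound $\alpha\le\alphaL$ keeping it under control). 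Extra care is needed near the deep holes of $\AD$ (the vertices of $\HH$), where several lattice points are almost equidistant and the first-shell terms partly cancel: there one leans on the $2$-design geometry of the first shell, in the spirit of Lemma \ref{lem:geomwk}, to see that the net quadratic effect is still favourable. Feeding (a) and (b) into the compactness argument above yields \eqref{210large}.
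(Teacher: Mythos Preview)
Your reduction via compactness to the two pointwise statements (a) and (b) is sound in principle, and the Hessian computation in (b) using the $5$-design symmetry has the right structure. The genuine gap is (a): you have not proved it, only offered a ``plausible route'' (origin term plus first shells plus tail, with extra care near the deep holes). That outline is essentially the strategy the paper is forced to use in the \emph{hard} regime $\alpha\le\alphac$, where it occupies several pages of case analysis; nothing in your sketch explains why it should be any shorter here, and as written it is not a proof.

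The paper's argument for this lemma is entirely different and much cleaner. The idea you are missing is to apply Poisson summation to $\Psiav$ itself. Since the Fourier transform of $x\mapsto |(x+u)\cdot v|^2 e^{-\frac{\pi}{\alpha}|x+u|^2}$ is $k\mapsto \frac{\alpha^2}{2\pi}(1-2\pi\alpha|k\cdot v|^2)e^{-\pi\alpha|k|^2}e^{2i\pi u\cdot k}$, one gets
\[
\Psiav(u)-\Psiav(0)=\frac{\alpha^2}{2\pi}\sum_{k\in\ADD\setminus\{0\}}\bigl(2\pi\alpha|k\cdot v|^2-1\bigr)\,e^{-\pi\alpha|k|^2}\bigl(1-\cos(2\pi u\cdot k)\bigr).
\]
On each shell $S\subset\ADD$ of radius $r$, with weights $w_s(u):=2(1-\cos(2\pi u\cdot s))$, Lemma~\ref{lem:geomwk} gives $\sum_{s\in S}w_s(u)|s\cdot v|^2\ge \tfrac14 r^2\sum_{s\in S}w_s(u)$, so the shell's contribution is at least $\tfrac14\pi r^2\bigl(\alpha-\tfrac{2}{\pi r^2}\bigr)\sum_{s}w_s(u)\,e^{-\pi\alpha r^2}$. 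This is non-negative for \emph{every} shell as soon as $\alpha>\tfrac{2}{\pi \xstar^2}=\tfrac{\sqrt3}{\pi}\approx 0.5513$, which is precisely what the numerical choice $\alphac=0.552$ is for. Thus both your (a) and (b) drop out simultaneously, with no compactness and no case analysis. Keeping only the first shell and using $\sum_{s\in\Ss}w_s(u)\ge 48\,\xstar^2|u|^2$ (from the $2$-design identity together with $1-\cos x\ge \tfrac{2}{\pi^2}x^2$ for $|x|\le\pi$) yields the explicit bound
\[
\Psiav(u)-\Psiav(0)\ \ge\ 6\,\alpha^2\,\xstar^4\Bigl(\alpha-\tfrac{2}{\pi\xstar^2}\Bigr)e^{-\pi\alpha\xstar^2}\,|u|^2,
\]
uniform in $v$ and $u\in\HH$; the stated form \eqref{210large} follows because $e^{-\pi\alpha\xstar^2}$ and $e^{-\frac{\pi}{\alpha}\xstar^2}$ are comparable on $[\alphac,\alphaL]$.

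So the issue is not merely that (a) was left as a sketch; it is that your approach forgoes the Poisson/Lemma~\ref{lem:geomwk} mechanism that makes the regime $\alpha>\alphac$ the \emph{easy} one and gives $\alphac$ its meaning.
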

\begin{proof}
Assume that $\alpha \in (\alphac, \alphaL)$ and fix a unit vector $v$.

\paragraph{Step 1. Moving to Fourier side.} Taking the Fourier transform of $x \mapsto |(x+u) \cdot v|^2 e^{- \frac{\pi}{\alpha} |x+u|^2}$ in $\R^2$, for $u \in \R^2$ fixed and $|v| = 1$ yields:
\begin{equation*}
k \mapsto \frac{\alpha^2}{2 \pi} \left( 1 - 2 \pi \alpha |k \cdot v|^2  \right) e^{-\pi \alpha |k|^2} e^{2i \pi u \cdot k},
\end{equation*}
and thus, applying Poisson's summation formula, for all  $u \in \HH$ we have the identity:
\begin{equation}
\Psia(u)  = \frac{\alpha^2}{2 \pi} \left(1  + \sum_{k \in \ADD, k \neq 0} \left(1 - 2 \pi \alpha |k \cdot v|^2  \right) e^{-\pi \alpha |k|^2} \cos(2 \pi u \cdot k) \right),
\end{equation}
and thus in particular, we can write the difference $\Psia(u) - \Psia(0)$ as:
\begin{equation}
\label{PoissonPsia}
\Psia(u) - \Psia(0) =  \frac{\alpha^2}{2\pi} \sum_{k \in \ADD, k \neq 0} \left(2\pi \alpha |k \cdot v|^2  - 1 \right) e^{-\pi \alpha |k|^2} \left(1 - \cos(2 \pi u \cdot k) \right).
\end{equation}

\paragraph{Step 2. Using the periodic $2$-design property.} Fix a shell $S$ of radius $r > 0$ within $\ADD$ and write:
\begin{equation}
\label{prepialphahal}
\sum_{s \in S} \left(2\pi \alpha |s \cdot v|^2  - 1 \right) e^{-\pi \alpha |s|^2} \left(1 - \cos(2 \pi u \cdot s) \right) = \left(\pi \alpha \sum_{s \in S} w_s(u) |s \cdot v|^2 - \frac{1}{2} \sum_{s \in S} w_s(u) \right) e^{-\pi \alpha r^2},
\end{equation}
with non-negative weights $w_s(u) = 2\left(1 - \cos(2 \pi u \cdot s) \right)$ as in \eqref{def:wsu}. Applying Lemma \ref{lem:geomwk} we get:\begin{equation}
\label{pialphahal}
\pi \alpha \sum_{s \in S} w_s(u) |s \cdot v|^2 - \frac{1}{2} \sum_{s \in S} w_s(u) \geq \left(\frac{1}{4} \pi \alpha r^2 - \hal\right) \sum_{s \in S} w_s(u) = \frac{1}{4} \pi r^2 \left(\alpha - \frac{2}{\pi r^2} \right) \sum_{s \in S} w_s(u) .
\end{equation}
Since $r \geq \xstar$, since we assume $\alpha \geq \alphac$, and since the following numerical inequality is true:
\begin{equation}
\label{NUM_ALPHACALPHA}
\alphac > \frac{2}{\pi \xstar^2} = \frac{\sqrt{3}}{\pi} \quad (0.552 > 0.55132..)
\end{equation}
the right-hand side of \eqref{pialphahal} is non-negative, thus so is \eqref{prepialphahal}.

\paragraph{Step 3. Quadratic lower bound.} All the shells have a positive contribution, we focus on the first one.
\begin{claim}
For all $u \in \HH$, we have, $\Ss$ being the first shell of the lattice $\ADD$:
\begin{equation}
\label{wsufirstshell}
\sum_{s \in \Ss} w_s(u) \geq 48 \xstar^2 |u|^2.
\end{equation}
\end{claim}
\begin{proof}
By the choice of the fundamental cell $\HH$, we have $|u \cdot s| \leq \hal$ for all $s \in \Ss$ and $u \in \HH$. Using the lower bound $1-\cos x \geq \frac{2}{\pi^2} x^2$ valid for $|x| \leq \pi$, we get:
\begin{equation*}
w_s(u)=2\left(1-\cos(2\pi u \cdot s)\right)\geq 16 |u \cdot s|^2.
\end{equation*}
Summing over the six vertices of $\Ss$ and using the $2$-design property \eqref{eq:2design}, we get \eqref{wsufirstshell}.
\end{proof}
Combining \eqref{prepialphahal}, \eqref{pialphahal} and \eqref{wsufirstshell} we obtain:
\begin{equation}
  \sum_{s \in \Ss} \left(2\pi \alpha |s \cdot v|^2  - 1 \right) e^{-\pi \alpha |s|^2} \left(1 - \cos(2 \pi u \cdot s) \right) \geq 12 \pi \xstar^4  \left(\alpha - \frac{2}{\pi \xstar^2} \right) |u|^2.
\end{equation}
Returning to \eqref{PoissonPsia} and keeping only the first shell as a lower bound, we thus get:
\begin{equation*}
\Psiav(u) - \Psiav(0) \geq 6 \alpha^2 \xstar^4 \left( \alpha - \frac{2}{\pi \xstar^2} \right) e^{-\pi \alpha \xstar^2} |u|^2. 
\end{equation*} 
For $\alpha \geq \alphac > \frac{2}{\pi \xstar^2}$, we can write, for some small universal constant $\cc$
\begin{equation*}
	6 \xstar^4 \left( \alpha - \frac{2}{\pi \xstar^2} \right) \geq \cc \alpha.
\end{equation*}
We have thus obtained
\begin{equation*}
\Psiav(u) - \Psiav(0) \geq \cc \alpha^3 e^{- \pi \alpha \xstar^2} |u|^2.
\end{equation*}
It is more convenient for us to re-write this as in \eqref{210large}. This is valid because, for $\alpha$ between $\alphac$ and $\alphaL$:
\begin{equation*}
\alpha^3 e^{- \pi \alpha \xstar^2} \geq \cc e^{- \frac{\pi}{\alpha} \xstar^2}.
\end{equation*}
\end{proof}

\subsection{Second case: \texorpdfstring{$\alpha$}{alpha} small.}
Returning to the original expression \eqref{PsiaOri} for $\Psiav$, we write for $u \in \HH$:
\begin{equation}
\label{Psiaseparation}
\Psiav(u) = |u \cdot v|^2 e^{- \frac{\pi}{\alpha} |u|^2} + \sum_{x \in \AD \setminus \{0\}} |(x+u) \cdot v|^2 e^{- \frac{\pi}{\alpha} |x+u|^2}, \quad \Psiav(0) = \sum_{x \in \AD \setminus \{0\}} |x \cdot v|^2 e^{- \frac{\pi}{\alpha} |x|^2}.
\end{equation}
Until the end of the proof, we assume that $\alpha$ is “small” in the sense:
\begin{equation}
  \alpha \leq \alphac := 0.552
\end{equation}
and our goal is to prove, for some constant $\cc > 0$ independent of $\alpha, u$:
\begin{equation}
\label{tohold}
\Psiav(u) - \Psiav(0) \geq \cc e^{- \frac{\pi}{\alpha} \xstar^2}  |u|^2.
\end{equation}

The proof uses only elementary tools but is a bit intricate. We treat the first shell (and the origin) and the higher shells separately, and then distinguish several regimes in each case. 

\subsection{The first shell and the origin} 
Let $\Ss$ be the first shell of $\AD$. We denote by $\Psisas$ the contribution coming from $\Ss \cup \{0\}$:
\begin{equation*}
\Psisas : u \mapsto |u \cdot v|^2 e^{- \frac{\pi}{\alpha} |u|^2} + \sum_{s \in \Ss} |(s+u) \cdot v|^2 e^{- \frac{\pi}{\alpha} |s+u|^2}. 
\end{equation*}
Using the $2$-design property \eqref{eq:2design}, for every shell $S$ of radius $r$ we have
\begin{equation*}
\sum_{s \in S} |s \cdot v|^2 e^{- \frac{\pi}{\alpha} |s|^2} =  3 r^2 e^{- \frac{\pi}{\alpha} r^2},
\end{equation*}
so we are interested in finding a lower bound of the form:
\begin{equation}
\label{Psisasu0}
\Psisas(u) - \Psisas(0) = |u \cdot v|^2 e^{- \frac{\pi}{\alpha} |u|^2} + \sum_{s \in \Ss} |(s+u) \cdot v|^2 e^{- \frac{\pi}{\alpha} |s+u|^2} - 3 \xstar^2 e^{- \frac{\pi}{\alpha} \xstar^2} \geq \cc |u|^2 e^{- \frac{\pi}{\alpha} \xstar^2}.
\end{equation}
We will consider three cases: 
\begin{enumerate}
		\item If $|u \cdot v|$ is “large”, a direct computation suffices.
		\item If $|u \cdot v|$ is “small” and $|u|$ is “small”, we use a Taylor expansion.
		\item If $|u \cdot v|$ is “small” and  $|u|$ is “large”, direct or perturbative computations fail. We identify a “good vertex” within the first shell and show that its contribution is always big enough for \eqref{tohold} to hold. This relies on a tedious identification of the “worst case scenario” among all possible parameters. 
\end{enumerate}
\textit{Some of the arguments below rely on purely numerical inequalities between certain explicit quantities. A Python notebook that checks those inequalities \href{https://plmlab.math.cnrs.fr/tleble/hexagonal}{is available here}.}

\subsection{First situation: \texorpdfstring{$|u \cdot v|$}{<u,v>} is large.}
This first part deals only with the contribution of the origin. We make a simple observation:
\begin{claim}[If $|u \cdot v|$ is “large”.]
\label{claim:uvlarge}
There exists a constant $\cc > 0$ such that the following holds. Assume that $u, \alpha, v$ are such that:
\begin{equation}
\label{eq:uvlarge}
|u \cdot v| \geq \sqrt{3.01} \xstar e^{\frac{\pi}{2\alpha}\left(|u|^2 - \xstar^2\right) },
\end{equation}
then we have:
\begin{equation}
\label{LBQuadraticuvlarge}
\Psisas(u) - \Psisas(0) \geq \cc |u|^2 e^{-\frac{\pi}{\alpha} \xstar^2}.  
\end{equation}
\end{claim}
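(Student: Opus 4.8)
The plan is to show that hypothesis \eqref{eq:uvlarge} forces the single term coming from the origin in $\Psisas(u)$ to exceed $\Psisas(0)$ by a definite margin, while the six first-shell terms in $\Psisas(u)$ are all manifestly non-negative and can simply be dropped. So this is the "direct computation" advertised in the case list: no Taylor expansion, no case analysis, just the defining formula.

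First I would evaluate $\Psisas(0)$ exactly. Since $\Ss$ is a shell of radius $\xstar$ and a regular hexagon is a spherical $2$-design, the identity \eqref{eq:2design} with $|v| = 1$ gives $\sum_{s \in \Ss} |s \cdot v|^2 = \hal \cdot 6 \cdot \xstar^2 = 3\xstar^2$, while the origin contributes $|0\cdot v|^2 = 0$; hence $\Psisas(0) = 3\xstar^2 e^{-\frac{\pi}{\alpha}\xstar^2}$. Next, squaring \eqref{eq:uvlarge} and multiplying both sides by $e^{-\frac{\pi}{\alpha}|u|^2}$ yields
\begin{equation*}
|u\cdot v|^2 e^{-\frac{\pi}{\alpha}|u|^2} \geq 3.01\, \xstar^2 e^{\frac{\pi}{\alpha}(|u|^2 - \xstar^2)} e^{-\frac{\pi}{\alpha}|u|^2} = 3.01\,\xstar^2 e^{-\frac{\pi}{\alpha}\xstar^2}.
\end{equation*}
Dropping the non-negative shell terms $|(s+u)\cdot v|^2 e^{-\frac{\pi}{\alpha}|s+u|^2}$ in $\Psisas(u)$, I would then get $\Psisas(u) - \Psisas(0) \geq (3.01 - 3)\,\xstar^2 e^{-\frac{\pi}{\alpha}\xstar^2} = 0.01\,\xstar^2 e^{-\frac{\pi}{\alpha}\xstar^2}$.

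Finally, to rewrite the constant $0.01\,\xstar^2$ as a multiple of $|u|^2$, I would use that $u \in \HH$ and that the circumradius of the fundamental hexagon $\HH$ equals $\xstar/\sqrt{3}$, so $|u|^2 \leq \xstar^2/3$ and therefore $0.01\,\xstar^2 \geq 0.03\,|u|^2$. This proves \eqref{LBQuadraticuvlarge} with, say, $\cc = 0.03$ (any $\cc \leq 3\cdot 10^{-2}$ works). There is no genuine obstacle here: the argument is essentially one line once \eqref{eq:uvlarge} is squared, and the only points needing a little care are the $2$-design evaluation of $\Psisas(0)$ and the elementary bound $|u| \leq \xstar/\sqrt{3}$ on $\HH$. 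The slack between $3.01$ and $3$ hard-coded into the hypothesis is exactly what supplies the positive margin, and it explains the otherwise arbitrary-looking constant $\sqrt{3.01}$ in \eqref{eq:uvlarge}.
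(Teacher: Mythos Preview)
Your proof is correct and follows essentially the same approach as the paper: drop the non-negative first-shell terms, use the $2$-design identity to evaluate $\Psisas(0) = 3\xstar^2 e^{-\frac{\pi}{\alpha}\xstar^2}$, square the hypothesis to get the $0.01\,\xstar^2$ margin, and convert to $|u|^2$ via $|u|\leq \xstar/\sqrt{3}$ on $\HH$. The only difference is that you make the constant explicit ($\cc = 0.03$), whereas the paper leaves it as ``some small $\cc$''.
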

\begin{proof} Discarding the contribution from the first shell and keeping only the one from $0$ in \eqref{Psisasu0}, we get:
\begin{equation*}
\Psisas(u) - \Psisas(0) \geq |u \cdot v|^2 e^{- \frac{\pi}{\alpha} |u|^2}  - 3 \xstar^2 e^{- \frac{\pi}{\alpha} \xstar^2} = \left(|u \cdot v|^2 e^{\frac{\pi}{\alpha} \left(\xstar^2 - |u|^2 \right)} - 3 \xstar^2 \right) e^{- \frac{\pi}{\alpha} \xstar^2}. 
\end{equation*}
 Our assumption \eqref{eq:uvlarge} ensures that
\begin{equation*}
|u \cdot v|^2 e^{\frac{\pi}{\alpha} \left(\xstar^2 - |u|^2 \right)} \geq 3.01 \xstar^2, 
\end{equation*}
so in particular, for some small constant $\cc > 0$ we can guarantee that, as stated in \eqref{LBQuadraticuvlarge}:
\begin{equation*}
\Psisas(u) - \Psisas(0) \geq \cc \xstar^2  e^{- \frac{\pi}{\alpha} \xstar^2} \geq \cc |u|^2  e^{- \frac{\pi}{\alpha} \xstar^2}.
\end{equation*} 
\end{proof}
In the sequel, when considering $\Psisas$, we may thus assume that $|u \cdot v|$ is not “large” as in \eqref{eq:uvlarge}. 

\subsection{Second situation: \texorpdfstring{$|u \cdot v|$}{<u,v>} is small and \texorpdfstring{$|u|$}{u} is small.}
We now turn to the case where $|u|$ is small \emph{compared to $\alpha$}, in the sense that $\frac{\pi}{\alpha} |u| \leq \mustar$, for a certain threshold $\mustar$.
\begin{lemma}
\label{lem:pialphauverysmall} 
There exists $\cc > 0$ such that the following holds.
Assume that $u,v, \alpha$ are such that:
\begin{equation}
\label{eq:uvsmallusmall}
\alpha \leq \alphac, \quad |u \cdot v| \leq \sqrt{3.01} \xstar e^{\frac{\pi}{2 \alpha}\left(|u|^2 - \xstar^2\right) }, \quad \frac{\pi}{\alpha} |u| \leq \mustar = 2.73.
\end{equation}
Then we have:
\begin{equation}
\label{Psisasus0usmall}
\Psisas(u) - \Psisas(0) \geq \cc |u|^2 e^{- \frac{\pi}{\alpha} \xstar^2}.
\end{equation}
\end{lemma}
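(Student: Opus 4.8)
The plan is to treat Lemma~\ref{lem:pialphauverysmall} as a second-order statement around $u=0$, with the error controlled directly rather than through a naive Taylor remainder. Since $\Ss\cup\{0\}$ is centrally symmetric and $u\mapsto|u\cdot v|^2 e^{-\frac{\pi}{\alpha}|u|^2}$ is even, $\Psisav$ is even, hence its gradient and its third-order differential vanish at $0$ and $\Psisas(u)-\Psisas(0)=\tfrac12\Hess\Psisav(0)[u,u]+R(u)$. First I would compute $\Hess\Psisav(0)$: differentiating $|(s+u)\cdot v|^2 e^{-\frac{\pi}{\alpha}|s+u|^2}$ twice at $u=0$ and summing over $s\in\Ss$ produces only sums of polynomials in $s$ of degree $\le 4$, so Remark~\ref{rem:5design} (regular hexagons are $5$-designs) replaces them by circular averages; with the origin contribution $2\,vv^{T}$ this gives
\[
\Hess\Psisav(0)=2\,vv^{T}+e^{-\frac{\pi}{\alpha}\xstar^2}\Big[\Big(\tfrac{3\pi^2\xstar^4}{\alpha^2}-\tfrac{6\pi\xstar^2}{\alpha}\Big)\id+\Big(12-\tfrac{24\pi\xstar^2}{\alpha}+\tfrac{6\pi^2\xstar^4}{\alpha^2}\Big)vv^{T}\Big].
\]
For $\alpha\le\alphac$ the $\alpha^{-2}$ terms dominate the $\alpha^{-1}$ ones — a numerical fact that pins down $\alphac$ — so $\Hess\Psisav(0)\succeq\cc\,\alpha^{-2}e^{-\frac{\pi}{\alpha}\xstar^2}\id$ and $\tfrac12\Hess\Psisav(0)[u,u]\ge\cc\,|u|^2 e^{-\frac{\pi}{\alpha}\xstar^2}$.

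The heart is the control of $R(u)$, and the shape of the hypotheses \eqref{eq:uvsmallusmall} is dictated by the fact that a crude bound $|R(u)|\lesssim|u|^4\sup_{[0,u]}\|D^4\Psisav\|$ is hopelessly weak: along the segment $[0,u]$, the Gaussian weight near the shell point anti-aligned with $u$ can exceed $e^{-\frac{\pi}{\alpha}\xstar^2}$ by a factor $e^{\frac{2\pi}{\alpha}\xstar|u|}$. Instead I would keep the first shell exactly: grouping antipodal pairs $\{s,-s\}$ in $\Ss$, each pair contributes
\[
e^{-\frac{\pi}{\alpha}\xstar^2}\Big[e^{-\frac{\pi}{\alpha}|u|^2}\Big(2\big((s\cdot v)^2+(u\cdot v)^2\big)\cosh\big(\tfrac{2\pi}{\alpha}s\cdot u\big)-4(s\cdot v)(u\cdot v)\sinh\big(\tfrac{2\pi}{\alpha}s\cdot u\big)\Big)-2(s\cdot v)^2\Big]
\]
to $\Psisas(u)-\Psisas(0)$, while the origin adds $|u\cdot v|^2 e^{-\frac{\pi}{\alpha}|u|^2}\ge 0$. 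The hypothesis $\frac{\pi}{\alpha}|u|\le\mustar$ bounds every argument $\frac{2\pi}{\alpha}s\cdot u$ by the explicit constant $2\xstar\mustar$ and keeps $e^{-\frac{\pi}{\alpha}|u|^2}$ bounded below; using $\cosh x\ge 1+\tfrac12 x^2$ together with the $5$-design identity $\sum_{s\in\Ss}(s\cdot v)^2(s\cdot u)^2=\tfrac34\xstar^4(|u|^2+2(u\cdot v)^2)$, the $\cosh$-part of the sum recovers a coercive term $\gtrsim\alpha^{-2}\xstar^4|u|^2 e^{-\frac{\pi}{\alpha}\xstar^2}$ consistent with the Hessian computation. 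The $\sinh$-part is the cross term; it is linear in $u\cdot v$, and the remaining hypothesis $|u\cdot v|\le\sqrt{3.01}\,\xstar\,e^{\frac{\pi}{2\alpha}(|u|^2-\xstar^2)}$ — the complement of Claim~\ref{claim:uvlarge} — makes $(u\cdot v)^2 e^{\frac{\pi}{\alpha}(\xstar^2-|u|^2)}$ bounded, so after one Cauchy--Schwarz in $s$ the cross term is absorbed partly into the coercive $\cosh$-term and partly into the origin contribution, the mechanism being that $e^{\frac{\pi}{\alpha}(\xstar^2-|u|^2)}$ beats the polynomial-in-$\alpha^{-1}$ prefactors carried by the bounded $\cosh,\sinh$ factors. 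What remains is a finite list of explicit inequalities in $\alpha\in(0,\alphac]$, in $\mu:=\frac{\pi}{\alpha}|u|\in[0,\mustar]$ and in the directions of $u$ and $v$, ranging over a bounded set, which I would verify numerically (the announced notebook); the limits $\alpha\to 0$ and $\mu\to 0$ are the easiest since only the $5$-design coercivity survives there.

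The main obstacle is exactly this last reduction: the absorption must be made quantitative \emph{all the way up to} $\alpha=\alphac=0.552$ and $\mu=\mustar=2.73$, where the gap between the $\alpha^{-2}$ coercivity and the $\alpha^{-1}$ (and $O(1)$) error terms is narrow — which is why lossy hand inequalities do not suffice and the precise constants in \eqref{eq:uvsmallusmall} are forced. A secondary point is to handle separately the two sub-limits (for $\mu$ small one is genuinely in a Taylor regime and the origin term alone absorbs the $2$-design cross term; for $\mu$ bounded away from $0$ the $\cosh$-coercivity is already of size $\gtrsim\mustar^2$), so that the numerical check cleanly covers the whole parameter box.
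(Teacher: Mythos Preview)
Your overall plan matches the paper's: expand around $u=0$, evaluate the resulting shell sums via the design identities of the hexagon, use the hypothesis on $|u\cdot v|$ to control the one potentially negative piece, and close with a numerical check. Your Hessian formula is correct.

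The gap is the handling of the $\sinh$ cross term. A Cauchy--Schwarz bound replaces each $\sinh(\tfrac{2\pi}{\alpha}s\cdot u)$ by its maximum $\sinh(2\xstar\mustar)\approx 177$. At the corner $(\alpha,\tfrac{\pi}{\alpha}|u|)=(\alphac,\mustar)$, where $|u\cdot v|$ can still be as large as $\approx 0.13$ under the hypothesis, this crude bound gives a cross-term contribution in $e^{-\K|u|^2}\SUM$ of order $\sim 80$, against a coercive term $\tfrac{3}{2}\K^2\xstar^4|u|^2 e^{-\K|u|^2}\sim 4$. The absorption you describe therefore fails by a factor of about $20$; and keeping $\sinh$ exact does not help either, since the resulting expression still depends on the individual $s\cdot u$ (hence on the direction of $u$), so no clean finite-dimensional reduction emerges.

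The paper avoids bounding the cross term at all: it uses the pointwise inequality $e^{-t}\ge 1-t+\tfrac{t^2}{2}-\tfrac{t^3}{6}$ (valid for every real $t$) on each of the six non-negative summands $((s+u)\cdot v)^2 e^{-\frac{2\pi}{\alpha}s\cdot u}$. After summing over $s\in\Ss$ the odd-in-$s$ pieces with $k=1,3$ survive (they carry $(s\cdot v)(s\cdot u)^k$, even in $s$) and are evaluated \emph{exactly} by the $2$- and $4$-design identities; the result is a polynomial $\FDLT$ depending only on $\K=\pi/\alpha$, $y=|u|^2$ and $(u\cdot v)^2$, with the cross contributions sitting inside the factor $(u\cdot v)^2\,\Gamma(\K,y)$. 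One then bounds $(u\cdot v)^2$ by $A(\K,y)=3.01\,\xstar^2 e^{\K(y-\xstar^2)}$, observes that the resulting function $\Phi(\K,y)$ is concave in $y$, and checks a single numerical inequality at $y=y_{\max}=\mustar^2/\K^2$ --- which passes with margin $\approx 0.5$. Swapping your $\cosh/\sinh$ split for this third-order truncation is precisely the missing step.
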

\begin{proof}
We will prove that
\begin{equation}
\label{Ssusmall}
\sum_{s \in \Ss} |(s+u) \cdot v|^2 e^{- \frac{\pi}{\alpha} |s+u|^2} -  3 \xstar^2 \geq 0.01 |u|^2  e^{- \frac{\pi}{\alpha} \xstar^2}, 
\end{equation}
which clearly implies \eqref{Psisasus0usmall}. Expand the squares in the summands of \eqref{Ssusmall} and write, for $s \in \Ss$:
\begin{equation}
\label{expandsquareSDL}
((s+u) \cdot v)^2 e^{- \frac{\pi}{\alpha} |s+u|^2} = \left( (s \cdot v)^2  + 2 (s \cdot v) (u \cdot v) + (u \cdot v)^2 \right) e^{- \frac{2\pi}{\alpha} s \cdot u} e^{- \frac{\pi}{\alpha} |u|^2 - \frac{\pi}{\alpha} \xstar^2}.
\end{equation}
We focus first on the $s$-dependent part of the sum, namely:
\begin{equation}
\label{ExpandSUM}
\SUM := \sum_{s \in \Ss} \left( (s \cdot v)^2  + 2 (s \cdot v) (u \cdot v) + (u \cdot v)^2 \right) e^{- \frac{2\pi}{\alpha} s \cdot u}.
\end{equation} 

\subsubsection*{Step 1. A Taylor expansion to third order.}
Using the following elementary lower bound:
\begin{equation*}
e^{- \frac{2\pi}{\alpha} s \cdot u} \geq \sum_{k=0}^3 \frac{1}{k!} \left(- \frac{2\pi}{\alpha} s \cdot u\right)^k,
\end{equation*}
we see that $\SUM \geq \FDLT$, where $\FDLT$ is the “third order Taylor Expansion” of $\SUM$, namely
\begin{equation}
\label{FDLT}
\FDLT := \sum_{k=0}^3 \frac{1}{k!}  \sum_{s \in \Ss} \left( (s \cdot v)^2  + 2 (s \cdot v) (u \cdot v) + |u \cdot v|^2 \right)\left(- \frac{2\pi}{\alpha} s \cdot u\right)^k.
\end{equation}
By symmetry of $\Ss$ with respect to $s \mapsto -s$, the sums with an odd total power in $s$ vanish. Moreover, we can use the following identities due to the symmetries of a regular hexagon:
\begin{itemize}
  \item By the $2$-design property \eqref{eq:2design}, we have: 
  \begin{equation*}
\sum_{s \in \Ss}  (s \cdot v)^2 = 3 \xstar^2, \quad \sum_{s \in \Ss} (s \cdot u)^2 = 3 \xstar^2 |u|^2, \quad  \sum_{s \in \Ss} (s \cdot v) (s \cdot u) = 3 \xstar^2 (u \cdot v)
  \end{equation*}
  \item By the $4$-design property stated in Remark \ref{rem:5design}, we have
  \begin{multline*}
\sum_{s \in \Ss} (s \cdot v)^2 (s \cdot u)^2 = \frac{3}{4} \xstar^4 |u|^2 + \frac{3}{2} \xstar^4 (u \cdot v)^2, \quad , \quad \sum_{s \in \Ss} (s \cdot v) (s \cdot u)^3 = \frac{9}{4} \xstar^4 (u \cdot v) |u|^2, \quad \sum_{s \in \Ss} (s \cdot u)^4 = \frac{9}{4} \xstar^4 |u|^4. 
\end{multline*}
\end{itemize}
In the sequel, we use
\begin{equation*}
\K := \frac{\pi}{\alpha}
\end{equation*}
Expanding the right-hand side of \eqref{FDLT}, and using the identities above, we get:
\begin{equation}
\label{FDLT2}
\FDLT = 3\xstar^2 + \frac{3}{2} \K^2 \xstar^4 |u|^2 + (u \cdot v)^2 \left(6 + 3 \K^2 \xstar^4 \left(1 - 2 \K |u|^2 \right) + 6 \K \xstar^2 \left( \K |u|^2 - 2 \right) \right).
\end{equation}
The following term might give a negative contribution:
\begin{equation*}
\Gamma := 6 + 3 \K^2 \xstar^4 \left(1 - 2 \K |u|^2 \right) + 6 \K \xstar^2 \left( \K |u|^2 - 2 \right).
\end{equation*}

\paragraph{Step 2. Preliminaries}
Using the first and third constraints in \eqref{eq:uvsmallusmall} and our notation, we have:
\begin{equation*}
\K \geq \K_0 := \frac{\pi}{\alphac} \approx 5.691, \quad |u| \leq \frac{\mustar}{\K}.
\end{equation*}
Moreover, from the second condition in \eqref{eq:uvsmallusmall} we know that:
\begin{equation*}
(u \cdot v)^2 \leq 3.01 \xstar^2 e^{\K \left(|u|^2 - \xstar^2\right)}
\end{equation*}
Using the variable $y := |u|^2$, we thus have:
\begin{equation*}
\FDLT \geq 3\xstar^2 + \frac{3}{2} \K^2 \xstar^4 y + A(\K, y) \min\left(0, \Gamma(\K, y) \right),
\end{equation*}
where 
\begin{equation*}
y \leq \ymax(\K) := \frac{\mustar^2}{\K^2}, \quad A(\K, y) := 3.01 \xstar^2 e^{\K \left(y - \xstar^2\right)}, \quad \Gamma(\K, y) := \left(6 +  3 \K^2 \xstar^4 \left(1 - 2 \K y \right) + 6 \K \xstar^2 \left( \K y - 2 \right)  \right).
\end{equation*}
Observe that:
\begin{itemize}
  \item $\partial_y \Gamma$ is always negative, because $\K \xstar^2 \geq \K_0 \xstar^2 > 1$, and $\partial^2_{yy} \Gamma = 0$.
  \item $\Gamma(\K, 0) = 3\left(2 + \K^2 \xstar^4 - 4 \K \xstar^2 \right)$ is positive because $\K \geq \K_0 \geq \frac{\left(2 + \sqrt{2}\right)}{\xstar^2}$, which is the largest root.
  \item $\partial_y A = \K A$ and $\partial^2_{yy} A = \K^2 A$
\end{itemize}

\paragraph{Step 3. Concavity}
Consider the function
\begin{equation*}
\Phi(\K, y) := 3\xstar^2 + \frac{3}{2} \K^2 \xstar^4 y + A(\K, y) \min\left(0, \Gamma(\K, y) \right) - 3 \xstar^2 e^{\K y} - 0.01 y e^{\K y}.
\end{equation*}
We want to prove that $\Phi \geq 0$ for all admissible values of $\K, y$. Clearly, we have $\Phi(\K, 0) = 0$ for all $\K$. We claim that $\Phi$ is concave with respect to the second variable. Indeed:
\begin{itemize}
  \item On $\{\Gamma \geq 0\}$, we have:
\begin{equation*}
\partial^2_{yy} \Phi = - 3\xstar^2 \K^2 e^{\K y} - 0.01 \left(2 \K e^{\K y} + \K^2 y e^{\K y} \right) \leq 0.
\end{equation*}
  \item On $\{\Gamma \leq 0\}$, we have:
\begin{equation*}
\partial^2_{yy} \Phi = \partial^2_{yy} A \Gamma + 2 \partial_y A \partial_y \Gamma  - 0.01 \left(2 \K e^{\K y} + \K^2 e^{\K y} \right) = \K A \left(\K  \Gamma + 2 \partial_y \Gamma  \right) - 0.01 \left(2 \K e^{\K y} + \K^2 y e^{\K y} \right).
\end{equation*}
Here $\Gamma \leq 0$, and we know that $\partial_y \Gamma \leq 0$, so clearly $\partial^2_{yy} \Phi \leq 0$.
\end{itemize}
Since $\Phi(\K, \cdot)$ is concave on $[0, \ymax(\K)]$, it is enough to prove that $\Phi(\K, \ymax(\K)) \geq 0$.

\paragraph{Step 4. Study at $y = \ymax$.}
Plugging $y = \ymax = \frac{\mustar^2}{\K^2}$ into the expression of $\Phi$, we are left to show that:
\begin{equation*}
3\xstar^2 + \frac{3}{2} \xstar^4 \mustar^2 + 3.01 \xstar^2 e^{\frac{\mustar^2}{\K} - \K \xstar^2}  \min\left(0, \Gamma(\K, \ymax) \right) - 3 \xstar^2 e^{\frac{\mustar^2}{\K}} - 0.01 \frac{\mustar^2}{\K^2} e^{\frac{\mustar^2}{\K}} \geq 0.
\end{equation*}
with $\Gamma(\K, \ymax)$ given by:
\begin{equation*}
\Gamma(\K, \ymax) =  6 +  3 \K^2 \xstar^4  - 6 \K \mustar^2 \xstar^4 + 6 \xstar^2 \mustar^2 - 12 \K \xstar^2.
\end{equation*}
Observe that:
\begin{itemize}
  \item The quantity $e^{\frac{\mustar^2}{\K} - \K \xstar^2}$ is decreasing on $[\K_0, + \infty)$
  \item The quantity $3 \xstar^2 e^{\frac{\mustar^2}{\K}} + 0.01 \frac{\mustar^2}{\K^2} e^{\frac{\mustar^2}{\K}}$ is decreasing on $[\K_0, + \infty)$
  \item The map $\K \mapsto 6 +  3 \K^2 \xstar^4  - 6 \K \mustar^2 \xstar^4 + 6 \xstar^2 \mustar^2 - 12 \K \xstar^2$ is bounded below by $-3\left(2  + 2 \xstar^2 \mustar^2 + \xstar^4 \mustar^4 \right)$.
\end{itemize}
It is thus enough to check that:
\begin{equation*}
3\xstar^2 + \frac{3}{2} \xstar^4 \mustar^2 - 3.01 \xstar^2 e^{\frac{\mustar^2}{\K_0} - \K_0 \xstar^2} \times 3 \left(2  + 2 \xstar^2 \mustar^2 + \xstar^4 \mustar^4 \right) - 3 \xstar^2 e^{\frac{\mustar^2}{\K_0}} - 0.01 \frac{\mustar^2}{\K_0^2} e^{\frac{\mustar^2}{\K_0}} \geq 0.
\end{equation*}
This is a purely numerical inequality which is true with our choices $\alphac = 0.552$ (and $\K_0 = \frac{\pi}{\alphac}$), $\mustar = 2.73$.
\end{proof}

\renewcommand{\lambdac}{\mustar}

\subsection{Third situation: \texorpdfstring{$|u|$}{u} is “large”, and \texorpdfstring{$|u \cdot v|$}{<u, v>} is “small”.}
This last regime is the most difficult one to study. The following lemma, whose proof occupies the next few pages, deals with the first shell. 
\begin{lemma}
\label{lem:uvsmallularge}
There exists a constant $\cc > 0$ such that the following holds. Assume that $u, \alpha, v$ are such that:
\begin{equation}
\label{eq:uvsmallularge}
|u \cdot v| \leq \sqrt{3.01} \xstar e^{\frac{\pi}{2 \alpha}\left(|u|^2 - \xstar^2\right) }, \quad \frac{\pi}{\alpha} |u| \geq \mustar = 2.73.
\end{equation}
Then we have:
\begin{equation*}
\Psisas(u) - \Psisas(0) \geq \cc |u|^2 e^{- \frac{\pi}{\alpha} \xstar^2}.
\end{equation*}
\end{lemma}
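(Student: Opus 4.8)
The plan is to bound $\Psisas(u)$ below by a single term of its defining sum --- the one coming from a well-chosen vertex of $\Ss$ --- and show that this term already exceeds $\Psisas(0)=3\xstar^2 e^{-\frac\pi\alpha\xstar^2}$ by at least $\cc|u|^2 e^{-\frac\pi\alpha\xstar^2}$. Write $\K:=\frac\pi\alpha$, so $\K\ge\K_0:=\frac\pi{\alphac}$, and recall that on $\HH$ one has $|u|\le\xstar/\sqrt3<\xstar$. First I would use that $\Psisas(u)$ is invariant under the dihedral symmetries of the regular hexagon $\Ss$ acting simultaneously on $u$ and $v$, and under $v\mapsto-v$, to reduce to $u=|u|(\cos\phi,\sin\phi)$ with $\phi\in[0,\pi/6]$. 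Together with the hypotheses, the free parameters are then $\phi\in[0,\pi/6]$, $|u|\in[\mustar/\K,\,\xstar/\sqrt3]$, $\K\ge\K_0$, and an angle for $v$ constrained by $|u\cdot v|\le\sqrt{3.01}\,\xstar e^{\frac\K2(|u|^2-\xstar^2)}$ --- note this forces $v$ to be close to orthogonal to $u$, so it is cleanest to handle $v\perp u$ first and then let $|u\cdot v|$ vary.

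Let $s_1,s_2\in\Ss$ be the two vertices with the most negative inner product with $u$; an elementary computation (they are adjacent vertices of the hexagon) gives $s_1\cdot u\le-\tfrac{\sqrt3}2\xstar|u|$, $s_2\cdot u\le-\tfrac12\xstar|u|$, hence $|s_i+u|^2=\xstar^2+2s_i\cdot u+|u|^2<\xstar^2$ and, more precisely, $\K\bigl(\xstar^2-|s_i+u|^2\bigr)\ge\K|u|(\xstar-|u|)\ge\mustar\,\xstar(1-1/\sqrt3)>0$. Keeping only the term indexed by some $s\in\{s_1,s_2\}$ in $\Psisas(u)$ and discarding the nonnegative origin term, one gets
\[
\Psisas(u)-\Psisas(0)\ \ge\ \bigl(|(s+u)\cdot v|^2\,e^{\K(\xstar^2-|s+u|^2)}-3\xstar^2\bigr)\,e^{-\frac\pi\alpha\xstar^2},
\]
so it suffices to produce, for each admissible configuration, an $s\in\{s_1,s_2\}$ with $|(s+u)\cdot v|^2\,e^{\K(\xstar^2-|s+u|^2)}\ge3\xstar^2+\cc|u|^2$. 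The mechanism making this work is that $s_1+u$ and $s_2+u$ are uniformly non-colinear --- a short geometric computation in the spirit of Lemma~\ref{lem:geomwk} gives $|(s_1+u)\times(s_2+u)|\ge\xstar^2/(2\sqrt3)$ --- so $v$ cannot be nearly orthogonal to both, and one picks $s$ to maximize $|(s+u)\cdot v|$.

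Writing $v$ in polar form and imposing the constraint on $|u\cdot v|$, the quantity $\max_{s\in\{s_1,s_2\}}|(s+u)\cdot v|^2 e^{\K(\xstar^2-|s+u|^2)}$ becomes, after selecting the least favourable admissible $v$, an explicit function of $(\phi,|u|,\K)$. One checks it is monotone in $\K$ (a larger $\K$ only increases the exponential gain), so the worst case is $\K=\K_0$; on that slice one is left with the compact two-parameter domain $\phi\in[0,\pi/6]$, $|u|\in[\mustar/\K_0,\,\xstar/\sqrt3]$, on which --- after a couple more elementary monotonicity checks --- the inequality reduces to a finite list of explicit numerical inequalities, verified with the accompanying notebook.

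\textbf{Main obstacle.} The genuinely tight regime is $\phi\approx0$ together with $v\perp u$: then $s_1+u$ is parallel to $u$, hence orthogonal to $v$, the term of $s_1$ vanishes, and one is forced onto $s_2$, whose exponential gain $e^{\K(\xstar^2-|s_2+u|^2)}$ is smallest exactly when $|u|$ is close to $\xstar/\sqrt3$ and $\K$ close to $\K_0$. Showing that $|(s_2+u)\cdot v|^2$ is still large enough in that corner --- and carrying out the two interpolations needed to leave it, namely from "$v\perp u$" to "$|u\cdot v|$ merely bounded by the hypothesis" and from "$v\perp(s_1+u)$" to nearby directions of $v$ --- is where the tedious identification of the worst case lies, and it is precisely this corner that pins down the numerical constants $\alphac=0.552$, $\mustar=2.73$ and the factor $3.01$ in the hypothesis.
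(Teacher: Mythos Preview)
Your approach is essentially the paper's: reduce by hexagonal symmetry to a fundamental sector, discard all but one (or two) well-chosen vertices of $\Ss$ opposite to $u$, minimize over $v$ under the constraint, and then reduce the remaining parameters by monotonicity to a numerical endpoint check. The paper keeps only the single vertex $s_5$ (your $s_2$) throughout; since in your own ``main obstacle'' corner $\phi\approx 0$, $v\perp u$ the $s_4$-term vanishes anyway, the two-vertex detour buys nothing at the bottleneck and you land on the same numerical inequality as the paper.

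There is one concrete gap in your reduction over $\K$. The monotonicity is correct --- both the exponential and the $v$-minimum move the right way as $\K$ grows (you only invoke the first effect; the constraint $|u\cdot v|\le b(\alpha)$ also tightens, which helps) --- but it does \emph{not} pin down $\K=\K_0$ as the global worst case. For fixed $|u|$ the smallest admissible $\K$ is $\max(\K_0,\mustar/|u|)$, so the lower boundary of the admissible $(|u|,\K)$-region has two pieces: the segment $\{\K=\K_0,\ |u|\in[\rho_0,\xstar/\sqrt3]\}$ with $\rho_0:=\mustar\alphac/\pi$, and the curve $\{\K|u|=\mustar,\ |u|\in(0,\rho_0]\}$. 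Your ``slice $\K=\K_0$'' covers only the first. The paper handles both pieces explicitly (its Step~6): along the curve the relevant function is decreasing in $\rho=|u|$, so one must check the value at $\rho=\rho_0$ separately and compare it with the endpoint $\rho=\xstar/\sqrt3$; numerically both exceed $3\xstar^2$, with the latter being the global minimum. Without this second piece, small-$|u|$ configurations (with $\K>\K_0$) are not covered by your argument.
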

\begin{proof}
From \eqref{Psisasu0}, we know that:
\begin{equation}
\label{Psiasu0debutcomplique}
\Psisas(u) - \Psisas(0) \geq \sum_{s \in \Ss} |(s+u) \cdot v|^2 e^{- \frac{\pi}{\alpha} |s+u|^2} - 3 \xstar^2 e^{- \frac{\pi}{\alpha} \xstar^2}.
\end{equation}
Studying the full sum over the six vertices is challenging, but fortunately we can show that the contribution of \emph{a single “good” vertex} of $\Ss$ is sufficient to beat the negative term. 

\paragraph{Choice of the good vertex.} Denote the vertex of $\Ss$ in the direction of $(1,0)$ by $s_1$ and label the other vertices as shown on the picture below. Without loss of generality, we can assume that the angle $\theta$ between $s_1$ and the vector $u$ is in $[0, \frac{\pi}{6}]$, in which case we select $s_5$ as our “good vertex”.
\begin{figure}[h!]
\begin{center}
\begin{tikzpicture}[scale=1.7, line cap=round, line join=round]
  \def\R{1.3457}     
  \def\uRel{0.62}     
  \def\thetaDeg{18}   
  \def\vDelta{103}     
  \def\markrad{0.045} 

  \pgfmathsetmacro{\uLen}{\uRel*\R}

  \coordinate (O) at (0,0);

  \draw[thin] (O) circle (1);

  \foreach \ang/\name in {0/s1,60/s2,120/s3,180/s4,240/s5,300/s6} {
    \coordinate (\name) at (\ang:\R);
  }

  \foreach \P/\lab in {s1/$s_1$, s2/$s_2$, s3/$s_3$, s4/$s_4$, s5/$s_5$, s6/$s_6$} {
    \fill (\P) circle (0.015);
    \node[font=\footnotesize, anchor=west] at ($(\P)+(0.02,0)$) {\lab};
  }

  \draw[red, line width=0.7pt] (s5) circle (\markrad);

  \coordinate (U) at (\thetaDeg:\uLen);
  \draw[-{Latex[length=2.0mm]}, thick] (O) -- (U) node[below right = -0.3pt] {$u$};

  \pgfmathsetmacro{\phiDeg}{\thetaDeg + \vDelta}
  \coordinate (V) at (\phiDeg:1);
  \draw[-{Latex[length=2.0mm]}, thick] (O) -- (V) node[pos=0.55, above left=1pt] {$v$};

  \coordinate (X) at (\R,0); 
  \pic[draw,->,angle radius=9mm,angle eccentricity=1.15,
       "$\theta$"{font=\scriptsize, inner sep=1pt}] {angle = X--O--U};
\end{tikzpicture}
\end{center}
\caption{Without loss of generality, we assume that $\theta \in \left[0, \frac{\pi}{6}\right]$.}
\label{figures4}
\end{figure}
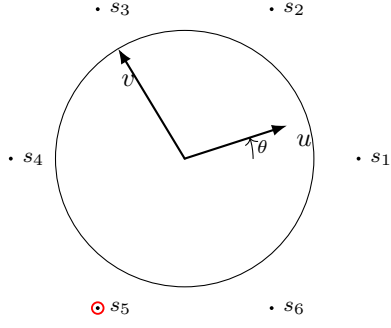

Focus on the contribution of $s_5$, expand the squares in the exponent, and write
\begin{equation}
\label{psiasu0uvsmallB}
|(s_5+u) \cdot v|^2 e^{- \frac{\pi}{\alpha} |s_5+u|^2} = |(s_5+u) \cdot v|^2 e^{- \frac{2\pi}{\alpha} s_5 \cdot u - \frac{\pi}{\alpha} |u|^2} e^{- \frac{\pi}{\alpha} \xstar^2}.
\end{equation}
We want this to beat $3 \xstar^2 e^{- \frac{\pi}{\alpha} \xstar^2}$. In that regard, the vertex $s_5$ has two interesting features:
\begin{itemize}
  \item It is “opposite” to $u$, so the term $- \frac{2\pi}{\alpha} s_5 \cdot u$ in the exponent should be “very positive” ($\alpha$ being “small”).
  \item It is \emph{not aligned with u}, so that $(s_5+u) \cdot v$ should not be too small. 

  Indeed, the first condition in \eqref{eq:uvsmallularge} tells us that $|u \cdot v|$ is “small”, thus $u$ and $v$ are “almost orthogonal”. If we chose the vertex $s_4$, which is “even more opposite” to $u$, there would be a risk to have $(s_4+u) \cdot v \approx 0$.
\end{itemize}
The rest of the proof of Lemma \ref{lem:uvsmallularge} consists in showing that, with our conditions \eqref{eq:uvsmallularge}, we always have
\begin{equation}
\label{s5isgood}
|(s_5+u) \cdot v|^2 e^{- \frac{2\pi}{\alpha} s_5 \cdot u - \frac{\pi}{\alpha} |u|^2} \geq 3 \xstar^2 + \cc |u|^2.
\end{equation} 

\paragraph{Step 0. Admissible parameters.}
\begin{itemize}
  \item Write $u$ as $u = (\rho \cos \theta, \rho \sin \theta)$. We have the following constraints:
\begin{equation*}
\rho \leq \frac{\xstar}{\sqrt{3}} \text{ (because $u \in \HH$)}, \quad \theta \in \left[0, \frac{\pi}{6}\right].
\end{equation*}
\item We recall that $\alpha$ is such that 
\begin{equation*}
\alpha \leq \min\left(\alphac, \frac{\pi \rho}{\lambdac} \right).
\end{equation*}

\item The vector $v$ has norm $1$ and must satisfy the condition
\begin{equation}
\label{uvbalpha}
|u \cdot v| \leq b(\alpha) :=  \sqrt{3.01} \xstar e^{\frac{\pi}{2 \alpha} \left(|u|^2 - \xstar^2\right)}.
\end{equation}
This condition is “increasing” in $\alpha$ in the sense that for a fixed $u$, there are more admissible $v$'s when $\alpha$ increases.
\end{itemize}
We consider the following function of $u, \alpha, v$:
\begin{equation}
\label{def:Fm}
\Fm(u, \alpha, v) := |(s_5+u) \cdot v|^2 e^{- \frac{2\pi}{\alpha} s_5 \cdot u - \frac{\pi}{\alpha} |u|^2}.
\end{equation}
Our goal is to prove that, for all admissible values of $u, \alpha, v$, we have:
\begin{equation*}
\Fm(u, \alpha, v) \geq 3 \xstar^2 + \cc |u|^2.
\end{equation*}

\paragraph{Step 1. Worst case for $v$.}
We start by minimizing the prefactor $|(s_5+u) \cdot v|^2$ over $v$ (there is no $v$-dependency in the exponential term).
\begin{claim}
\label{claim:muak}
Let $u \in \R^2$ be a nonzero vector, let $a \in \R^2$, and $\kappa \in [0,1]$. Let $z_1 := \frac{u}{|u|}$ and choose $z_2$  such $(z_1, z_2)$ is an orthonormal basis of $\R^2$. Let $a_1, a_2$ be the coordinates of $a$ in this basis. We have:
\begin{equation}
\label{def:muak}
\min_{|v| = 1, |u \cdot v| \leq \kappa |u|} |a \cdot v| = \max\left(0, |a_2| \sqrt{1-\kappa^2} - |a_1| \kappa \right).
\end{equation}
\end{claim}
\begin{proof}
 Write $v$ in the basis $(z_1, z_2)$ as $v=(\cos\varphi, \sin\varphi)$ for some $\varphi$. We must then minimize the quantity $\lvert a\cdot v\rvert=\lvert a_1\cos\varphi+a_2\sin\varphi\rvert$ under the constraint $\lvert u\cdot v\rvert\le \kappa |u|$, which is equivalent to $\lvert\cos\varphi\rvert\le \kappa$.

For a fixed value of $t := \lvert\cos\varphi\rvert \in [0, \kappa]$, the quantity $\lvert a_1\cos\varphi+a_2\sin\varphi\rvert$ is minimal when the two summands have opposite signs (so they subtract), in which case it is equal to $\left| |a_2| \sqrt{1-t^2} - |a_1| t \right|$. Therefore:
\begin{equation*}
\min_{|v| = 1, |u \cdot v| \leq \kappa |u|} |a \cdot v| = \min_{t\in[0,\kappa]} h(t), \qquad h(t):= \left| |a_2| \sqrt{1-t^2} - |a_1| t \right|.
\end{equation*}
The function $g := t \mapsto |a_2| \sqrt{1-t^2} - |a_1| t$ is strictly decreasing on $[0,1]$, with $g(0)=\lvert a_2\rvert\ge 0$, and $g(\kappa)=\lvert a_2\rvert\sqrt{1-\kappa^2}-\lvert a_1\rvert\,\kappa$. There are two cases:
\begin{itemize}
   \item If $g(\kappa)\ge 0$. Then $g(t)\ge 0$ for all $t\in[0,\kappa]$, and $h(t)=g(t) \geq g(\kappa)$ on $[0,\kappa]$.
\item If $g(\kappa)\le 0$. Since $g(0)\ge 0$ and $g$ is continuous and strictly decreasing, there exists
$t_*\in[0,\kappa]$ with $g(t_*)=0$. Then $h(t_*)=\lvert g(t_*)\rvert=0$, and thus $\min_{t\in[0,\kappa]}h(t)=0$.
 \end{itemize} 
Combining both cases yields the result.
\end{proof}

In particular, choosing $a = s_5 + u$, so that:
\begin{multline*}
a_1 = (s_5 + u) \cdot \frac{u}{|u|} = \xstar \cos\left(\pi + \frac{\pi}{3} - \theta\right) + \rho = \rho - \xstar \cos\left(\frac{\pi}{3} - \theta\right), \\
 |a_2| = \xstar \left|\sin\left(\pi + \frac{\pi}{3} - \theta\right)\right| = \xstar \sin\left(\frac{\pi}{3} - \theta\right)
\end{multline*} 
and letting $\kappa = \kappa(\alpha, \rho)$, where 
\begin{equation*}
\kappa(\alpha, \rho) := \frac{b(\alpha)}{\rho}, \text{ with } b(\alpha) = \sqrt{3.01} \xstar e^{\frac{\pi}{2 \alpha} \left(|u|^2 - \xstar^2\right)} \text{ as in \eqref{uvbalpha}},
\end{equation*}
applying \eqref{def:muak} yields:
\begin{multline}
\label{eq:MMualpha}
\MM(u, \alpha) := \min_{|v| = 1, |u \cdot v| \leq b(\alpha)} |(s_5+u) \cdot v| \\ = \max\left(0,  \xstar \sin\left(\frac{\pi}{3}- \theta \right)\sqrt{1 - \left(\kappa(\alpha, \rho)\right)^2} - \left|\rho - \xstar \cos\left(\frac{\pi}{3} - \theta\right) \right| \kappa(\alpha, \rho) \right),
\end{multline}
and in particular we can write for all admissible $u, \alpha, v$:
\begin{equation}
\label{def:GmFm}
\Fm(u, \alpha, v) \geq \Gm(u, \alpha) := \MM(u, \alpha)^2 e^{- \frac{2\pi}{\alpha} s_5 \cdot u - \frac{\pi}{\alpha} |u|^2}.
\end{equation}

\paragraph{Step 2. Worst case for $\alpha$.}
\begin{claim}
\label{claim:alphamonotone}
For all admissible $u, \alpha$, we have:
\begin{equation}
\label{alphamax}
\Gm(u, \alpha) \geq \Gm\left(u, \alphamax(\rho)\right), \quad \alphamax(\rho):=\min\left(\alphac,\ \frac{\pi \rho}{\lambdac}\right).
\end{equation}
\end{claim}
\begin{proof} It follows from the fact that the map $\alpha \mapsto \Gm(u, \alpha)$ is \emph{decreasing}. Indeed:
\begin{itemize}
  \item Write $s_5 \cdot u$ as $-\xstar \rho \cos\left(\frac{\pi}{3} - \theta\right)$. Since $\theta\in[0,\pi/6]$, we have $\cos\left(\frac{\pi}{3} - \theta\right) \ge \tfrac12$, hence
\begin{equation}
\label{eq:expweight}
 2\xstar \rho \cos\left(\frac{\pi}{3} - \theta\right)  - \rho^2 \geq \xstar \rho - \rho^2 \geq 0,
\end{equation}
because $\rho \leq \frac{1}{\sqrt{3}} \xstar$, thus the exponential term is decreasing in $\alpha$.
\item The quantity $b(\alpha)$ defined in \eqref{uvbalpha} is increasing in $\alpha$, hence the constraint $|u \cdot v| \leq b(\alpha)$ \emph{weakens} as $\alpha$ increases, and thus, for fixed $u$, the quantity $\MM(u, \alpha)$, defined as a minimum, is \emph{decreasing} in $\alpha$.
\end{itemize}
The maximal admissible $\alpha$ must satisfy both $\alpha \leq \alphac$ and $\alpha \leq \frac{\pi \rho}{\lambdac}$, hence the expression \eqref{alphamax} for $\alphamax$.
\end{proof}

For $u = (\rho\cos \theta, \rho\sin \theta)$, the quantity $\Gm\left(u, \alphamax(\rho)\right)$ is given, with $\MM(u, \alphamax(\rho))$ as in \eqref{eq:MMualpha}, by:
\begin{equation}
\label{Gmalphamax}
\Gm\left(u, \alphamax(\rho)\right) = \MM(u, \alphamax(\rho))^2 e^{\frac{2\pi}{\alphamax(\rho)}\left(2\xstar \rho \cos\left(\frac{\pi}{3} - \theta\right)  -  \rho^2\right)}.
\end{equation}
In the following, we will denote by $\kappa(\rho)$ the quantity appearing in $\MM(u, \alphamax(\rho))$:
\begin{equation}
\label{def:kappar}
\kappa(\rho) := \kappa(\alphamax(\rho), \rho) := \frac{b(\alphamax(\rho))}{\rho} :=  \sqrt{3.01}  \frac{\xstar}{\rho} e^{\frac{\pi}{2 \alphamax(\rho)}\left(\rho^2 - \xstar^2\right)}, \text{ with $\alphamax(\rho):=\min\left(\alphac,\ \frac{\pi \rho}{\lambdac}\right)$.}
\end{equation}

\paragraph{Step 3. A quick study of $\kappa$.}
With our choice for $\lambdac$, the following holds.
\begin{claim}
\label{claim:studykappa} The map $\rho \mapsto \kappa(\rho)$ is increasing over $\left[0, \frac{\xstar}{\sqrt{3}}\right]$, and we have:
\begin{equation*}
\max_{\rho \in \left[0, \frac{\xstar}{\sqrt{3}}\right]} \kappa\left(\rho\right) = \kappa\left(\frac{\xstar}{\sqrt{3}}\right) = \sqrt{3.01} \times \sqrt{3} e^{- \frac{\pi}{3 \alphac} \xstar^2} \leq 0.337 < \hal.
\end{equation*} 
\end{claim}
\begin{proof}
In view of our expression for $\alphamax(\rho)$, define the threshold $\rho_0$ as:
\begin{equation}
\label{def:rho0}
\rho_0 := \frac{\lambdac \alphac}{\pi},
\end{equation} 
so that if $\rho \leq \rho_0$, then $\alphamax(\rho) = \frac{\pi \rho}{\lambdac}$ and if $\rho \geq \rho_0$, then $\alphamax(\rho) = \alphac$.
\begin{itemize}
  \item If $\rho \leq \rho_0$, the expression for $\kappa(\rho)$ (see \eqref{def:kappar}) is $\sqrt{3.01} \frac{\xstar}{\rho} e^{\frac{\lambdac}{2}\left(\rho - \frac{\xstar^2}{\rho}\right)}$. Positivity of its derivative is equivalent to $\frac{\lambdac}{2}\rho^2 - \rho + \frac{\mustar}{2} \xstar^2 \geq 0$, which is true because the discriminant is negative, indeed:
  \begin{equation*}
\mustar \geq 1 \geq  \frac{1}{\xstar}.
  \end{equation*}
  \item If $\rho \geq \rho_0$, the expression for $\kappa(\rho)$ is $\sqrt{3.01} \frac{\xstar}{\rho} e^{\frac{\pi}{2 \alphac}\left(\rho^2 - \xstar^2\right)}$. This is increasing if $\rho \geq \sqrt{\frac{\alphac}{\pi}}$, but here $\rho \geq \rho_0 = \frac{\lambdac \alphac}{\pi}$ and we do have $\mustar \geq \sqrt{\frac{\pi}{\alphac}}$.
\end{itemize}
This shows that $\rho \mapsto \kappa(\rho)$ is increasing. Finally, we simply compute
\begin{equation*}
\kappa\left( \frac{\xstar}{\sqrt{3}} \right) = \sqrt{3.01} \times \sqrt{3} e^{- \frac{\pi}{3 \alphac} \xstar^2} \leq 0.337 < \hal.
\end{equation*}
\end{proof}
It will be relevant in the sequel to notice that, with our choice for $\mustar$, we have:
\begin{equation*}
\rho_0 = \frac{\mustar \alphac}{\pi} \leq \frac{\xstar}{2}.
\end{equation*}

\paragraph{Step 4. Checking the positivity of the prefactor.}
\begin{claim}
\label{claim:positivityMM}
The quantity $\xstar \sin\left(\frac{\pi}{3} - \theta \right)\sqrt{1 - \left(\kappa(\rho)\right)^2} - \left|\rho - \xstar \cos\left(\frac{\pi}{3} - \theta\right) \right| \kappa(\rho)$ is always positive.
\end{claim}
\begin{proof}
Since $\theta\in[0,\pi/6]$, we have $\sin\left(\frac{\pi}{3} - \theta\right) \geq \hal$, as well as $0 \leq \cos\left(\frac{\pi}{3} - \theta\right) \leq \frac{\sqrt{3}}{2}$, which implies:
\begin{equation*}
\left|\rho - \xstar \cos\left(\frac{\pi}{3} - \theta\right) \right| \leq \max\left(\rho, \xstar \cos\left(\frac{\pi}{3} - \theta\right)\right) \leq \max\left(\frac{\xstar}{\sqrt{3}}, \frac{\sqrt{3}}{2} \xstar  \right) = \frac{\sqrt{3}}{2} \xstar.
\end{equation*}
It is thus enough to show that $\left(\sqrt{1-\kappa(\rho)^2}- \sqrt{3} \kappa(\rho) \right) \ge 0$ for all admissible $\rho$. This is equivalent to having $\kappa(\rho) \leq \hal$, which we know is true.
\end{proof}
As a consequence, the prefactor $\MM(u, \alphamax(\rho))$ in \eqref{Gmalphamax} is always given, cf. \eqref{eq:MMualpha}, by
\begin{equation}
\label{MMualphamaxr}
\MM(u, \alphamax(\rho)) = \xstar \sin\left(\frac{\pi}{3} - \theta \right)\sqrt{1 - \left(\kappa(\rho)\right)^2} - \left|\rho - \xstar \cos\left(\frac{\pi}{3} - \theta\right) \right| \kappa(\rho). 
\end{equation}

\paragraph{Step 5. Re-expressing $\Gm\left(u, \alphamax(\rho)\right)$}
Introduce the following additional variables:
\begin{equation*}
\phi := \frac{\pi}{3} - \theta \in \left[\frac{\pi}{6}, \frac{\pi}{3}\right], \quad \delta(\rho) := \arcsin\left( \kappa(\rho) \right)
\end{equation*}
In view of \eqref{MMualphamaxr} we can write, using the variable $\phi$:
\begin{equation}
\label{MMMrphigeneral}
\MMM(\rho, \phi) := \MM(u, \alphamax(\rho)) =  \xstar \sin\left(\phi \right)\sqrt{1 - \left(\kappa(\rho)\right)^2} - \left|\rho - \xstar \cos\left(\phi \right) \right| \kappa(\rho), 
\end{equation}
the only ambiguity being the sign of $\rho - \xstar \cos\left(\phi \right)$. There are several cases:
\begin{itemize}
  \item If $\rho \leq \frac{\xstar}{2}$, then for all $\phi \in \left[\frac{\pi}{6}, \frac{\pi}{3}\right]$ we have
  \begin{equation}
  \label{rleqxstar2}
\MMM(\rho, \phi) =  \xstar \sin\left(\phi \right)\sqrt{1 - \left(\kappa(\rho)\right)^2} - \xstar \cos\left(\phi \right) \kappa(\rho) + \rho\kappa(\rho) = \xstar \sin(\phi - \delta(\rho)) + \rho\kappa(\rho).
  \end{equation}
  \item If $\rho \geq \frac{\xstar}{2}$ and $\frac{\pi}{6} \leq \phi \leq \arccos\left( \frac{\rho}{\xstar}\right)$, we have again:
      \begin{equation*}
\MMM(\rho, \phi) =  \xstar \sin(\phi - \delta(\rho)) + \rho\kappa(\rho).
  \end{equation*}
  \item If $\rho \geq \frac{\xstar}{2}$ and  $\arccos\left( \frac{\rho}{\xstar}\right) \leq \phi \leq \frac{\pi}{3}$, we have instead:
\begin{equation*}
\MMM(\rho, \phi) =  \xstar \sin\left(\phi \right)\sqrt{1 - \left(\kappa(\rho)\right)^2} + \xstar \cos\left(\phi \right) \kappa(\rho) - \rho \kappa(\rho) = \xstar \sin(\phi + \delta(\rho)) - \rho\kappa(\rho).
\end{equation*}
\end{itemize}
We are left to study the following map over $[0, \frac{\xstar}{\sqrt{3}}] \times [\frac{\pi}{6}, \frac{\pi}{3}]$, cf. \eqref{Gmalphamax}
\begin{equation*}
\Hm : (\rho, \phi) \mapsto \MMM(\rho, \phi)^2 \exp\left(\frac{\pi}{\alphamax(\rho)} \left( 2\xstar \rho \cos \phi - \rho^2  \right) \right).
\end{equation*}

\paragraph{Step 5. Worst case for $\phi$.} 
\begin{claim}
For all admissible $\rho$, the quantity $\Hm(\rho, \phi)$ is minimal at $\phi = \frac{\pi}{3}$.
\end{claim}
\begin{proof}
The proof goes in two steps: first, we show that the minimum is always at $\frac{\pi}{3}$ \emph{or} $\frac{\pi}{6}$, then we compare the two values directly and find that the smaller one is always at $\frac{\pi}{3}$.

\vspace{0.2cm}
\textit{1. The minimum is at the endpoints.} It suffices to show that $\partial_\phi \Hm(\rho, \cdot)$ is decreasing for fixed $\rho$. Write:
\begin{equation*}
\partial_\phi  \Hm(\rho, \phi) = \left( 2 \MMM(\rho, \phi) \partial_\phi \MMM(\rho, \phi) -  \frac{2\pi \xstar \rho}{\alphamax(\rho)} \MMM(\rho, \phi)^2 \sin \phi \right) \exp\left(\frac{\pi}{\alphamax(\rho)} \left( 2\xstar \rho \cos \phi - \rho^2  \right) \right),
\end{equation*}
and recall that $\MMM(\rho, \phi)$ is always positive, thus the sign of $\partial_\phi  \Hm(\rho, \phi)$ is the same as the sign of
\begin{equation*}
\TT(\rho, \phi) := \partial_\phi \MMM(\rho, \phi) - \frac{\pi \xstar \rho}{\alphamax(\rho)} \MMM(\rho, \phi) \sin \phi .
\end{equation*}
We split the discussion into several cases. 
\begin{itemize}
  \item If $\rho \leq \frac{\xstar}{2}$, we know from \eqref{rleqxstar2} that
\begin{equation*}
\MMM(\rho, \phi) =  \xstar \sin(\phi - \delta) + \rho\kappa(\rho), \text{ and thus } \partial_\phi \MMM(\rho, \phi) = \xstar \cos(\phi - \delta),
\end{equation*}
in which case we obtain the following expression for $\TT(\rho, \phi)$:
\begin{equation}
\label{caseAoptimphi}
\TT(\rho, \phi) =  \xstar \cos(\phi - \delta) - \frac{\pi \xstar \rho}{\alphamax(\rho)} \left( \xstar \sin(\phi - \delta) + \rho\kappa(\rho)  \right) \sin \phi.
\end{equation}
Since $0 \leq \delta \leq \frac{\pi}{6}$, this is strictly decreasing for $\phi \in [\frac{\pi}{6}, \frac{\pi}{3}]$.

\item If $\rho \geq \frac{\xstar}{2}$, the expression for $\MMM(\rho, \phi)$ changes when $\phi$ reaches $\arccos\left( \frac{\rho}{\xstar}\right)$, so we distinguish two intervals:
\begin{itemize}
  \item On $\left[\frac{\pi}{6}, \arccos\left( \frac{\rho}{\xstar}\right)\right]$, the expression for $\TT(\rho, \phi)$ is as above, and still strictly decreasing in $\phi$. 
  \item On $\left[\arccos\left( \frac{\rho}{\xstar}\right), \frac{\pi}{3}\right]$, we have instead:
  \begin{equation*}
\MMM(\rho, \phi) =  \xstar \sin(\phi + \delta) - \rho\kappa(\rho), \text{ and thus } \partial_\phi \MMM(\rho, \phi) = \xstar \cos(\phi + \delta),
  \end{equation*}
 which yields the following expression for $\TT(\rho, \phi)$
  \begin{equation*}
\TT(\rho, \phi) =  \xstar \cos(\phi + \delta) -  \frac{\pi \xstar \rho}{\alphamax(\rho)} \left(  \xstar \sin(\phi + \delta) - \rho\kappa(\rho) \right) \sin \phi,
  \end{equation*}
  which is again strictly decreasing in $\phi$.
\end{itemize}
\end{itemize}
\vspace{0.2cm}
\textit{2. The minimum is at $\frac{\pi}{3}$.} We now compare the values of $\Hm(\rho, \frac{\pi}{6})$ and  $\Hm(\rho, \frac{\pi}{3})$ in different regimes, and show that:
\begin{equation}
\label{compareHmpi6pi3}
\frac{\Hm(\rho, \frac{\pi}{6})}{\Hm(\rho, \frac{\pi}{3})} = \left( \frac{\MMM(\rho, \frac{\pi}{6})}{\MMM(\rho, \frac{\pi}{3})}  \right)^2 \exp\left(\frac{\pi}{\alphamax(\rho)} \xstar \rho \left(\sqrt{3}-1 \right) \right) \geq 1.
\end{equation}
First, observe that since we assume $\frac{\pi}{\alpha} \rho \geq \mustar$, we have:
\begin{equation*}
\exp\left(\frac{\pi}{\alphamax(\rho)} \xstar \rho \left(\sqrt{3}-1 \right) \right) \geq \exp\left(\mustar \xstar \left(\sqrt{3}-1 \right) \right) \approx 8.56
\end{equation*}
and we are left to prove that the ratio $\frac{\MMM(\rho, \frac{\pi}{6})}{\MMM(\rho, \frac{\pi}{3})}$ cannot be too small, so that:
\begin{equation*}
\left( \frac{\MMM(\rho, \frac{\pi}{6})}{\MMM(\rho, \frac{\pi}{3})}  \right)^2 \exp\left(\mustar \xstar \left(\sqrt{3}-1 \right) \right) \geq 1.
\end{equation*}

\begin{itemize}
  \item If $0 \leq \rho \leq \frac{9 \xstar}{20}$. Then of course $\rho \leq \hal \xstar$. According to the expressions for $\MMM$ given in \eqref{MMMrphigeneral}, we have:
  \begin{equation*}
\MMM\left(\rho, \frac{\pi}{6}\right) =  \frac{1}{2} \xstar \sqrt{1 - \left(\kappa(\rho)\right)^2} - \left(\frac{\sqrt{3}}{2} \xstar - \rho \right) \kappa(\rho), \quad \MMM\left(\rho, \frac{\pi}{3}\right) = \frac{\sqrt{3}}{2} \xstar \sqrt{1 - \left(\kappa(\rho)\right)^2} - \left(\hal \xstar - \rho \right) \kappa(\rho).
  \end{equation*}
For $\bkappa = \kappa(\rho)$ fixed, a quick computation shows that the map 
\begin{equation*}
\trho \mapsto \frac{\frac{1}{2} \xstar \sqrt{1 - \bkappa^2} - \left(\frac{\sqrt{3}}{2} \xstar - \trho \right) \bkappa}{\frac{\sqrt{3}}{2} \xstar \sqrt{1 - \bkappa^2} - \left(\hal \xstar - \trho \right) \bkappa}  = \frac{\left(\frac{1}{2} \xstar \sqrt{1 - \bkappa^2} - \frac{\sqrt{3}}{2} \xstar \bkappa\right) + \trho \bkappa}{\left(\frac{\sqrt{3}}{2} \xstar \sqrt{1 - \bkappa^2} - \hal \xstar  \bkappa\right) + \trho \bkappa}
\end{equation*}
is increasing, and thus minimal at $0$. We obtain:
\begin{equation*}
\frac{\MMM(\rho, \frac{\pi}{6})}{\MMM(\rho, \frac{\pi}{3})} \geq \frac{\sqrt{1 - \bkappa^2} - \sqrt{3} \bkappa}{\sqrt{3} \sqrt{1 - \bkappa^2} -   \bkappa},
\end{equation*}
and this quantity is decreasing in $\bkappa$. Since $\rho \mapsto \kappa(\rho)$ is increasing, we have $\bkappa \leq \kappa\left( \frac{9 \xstar}{20} \right)$, and thus:
\begin{equation*}
\frac{\MMM(\rho, \frac{\pi}{6})}{\MMM(\rho, \frac{\pi}{3})} \geq  \frac{\sqrt{1 - \kappa\left( \frac{9 \xstar}{20} \right)^2} - \sqrt{3} \kappa\left( \frac{9 \xstar}{20} \right)}{\sqrt{3} \sqrt{1 -  \kappa\left( \frac{9 \xstar}{20} \right)^2} -  \kappa\left( \frac{9 \xstar}{20} \right)}.
\end{equation*}
To prove \eqref{compareHmpi6pi3}, it remains to check the purely numerical inequality:
\begin{equation*}
\left( \frac{\sqrt{1 - \kappa\left( \frac{9 \xstar}{20} \right)^2} - \sqrt{3} \kappa\left( \frac{9 \xstar}{20} \right)}{\sqrt{3} \sqrt{1 -  \kappa\left( \frac{9 \xstar}{20} \right)^2} -  \kappa\left( \frac{9 \xstar}{20} \right)} \right)^2 \exp\left(\mustar \xstar \left(\sqrt{3}-1 \right) \right) \geq 1.
\end{equation*}

\item If $\frac{9 \xstar}{20} \leq \rho \leq \frac{1}{\sqrt{3}} \xstar$. Now, we write:
\begin{equation*}
\MMM\left(\rho, \frac{\pi}{6}\right) \geq \frac{1}{2} \xstar \sqrt{1 - \left(\kappa(\rho)\right)^2} -  \left(  \frac{\sqrt{3}}{2} - \frac{9}{20} \right) \xstar \kappa(\rho), 
\end{equation*}
and observe that even if the expression of $\MMM\left(\rho, \frac{\pi}{3}\right)$ changes at $\rho = \hal \xstar$, we always have:
\begin{equation*}
\MMM\left(\rho, \frac{\pi}{3}\right)  \leq \frac{\sqrt{3}}{2} \xstar \sqrt{1 - \left(\kappa(\rho)\right)^2}.
\end{equation*}
Dividing each term by $\hal \xstar$ gives the lower bound:
\begin{equation}
\label{910}
\frac{\MMM(\rho, \frac{\pi}{6})}{\MMM(\rho, \frac{\pi}{3})} \geq \frac{ \sqrt{1 -\kappa^2} -  \left( \sqrt{3} - \frac{9}{10} \right) \kappa }{ \sqrt{3}  \sqrt{1 -\kappa^2}}.
\end{equation}
This quantity is decreasing in $\kappa$ and thus (since $\kappa$ is increasing with $\rho$)
\begin{equation*}
\frac{\MMM(\rho, \frac{\pi}{6})}{\MMM(\rho, \frac{\pi}{3})} \geq \frac{ \sqrt{1 -\kappa\left(\frac{1}{\sqrt{3}} \xstar\right)^2} -  \left( \sqrt{3} - \frac{9}{10} \right) \kappa\left(\frac{1}{\sqrt{3}} \xstar\right) }{ \sqrt{3}  \sqrt{1 -\kappa\left( \frac{1}{\sqrt{3}} \xstar \right)^2}}.
\end{equation*}
To prove \eqref{compareHmpi6pi3}, it remains to check the purely numerical inequality:
\begin{equation*}
\left(\frac{ \sqrt{1 -\kappa\left(\frac{1}{\sqrt{3}} \xstar\right)^2} -  \left( \sqrt{3} - \frac{9}{10} \right) \kappa\left(\frac{1}{\sqrt{3}} \xstar\right) }{ \sqrt{3}  \sqrt{1 -\kappa\left( \frac{1}{\sqrt{3}} \xstar \right)^2}} \right)^2 \exp\left(\mustar \xstar \left(\sqrt{3}-1 \right) \right) \geq 1.
\end{equation*}
\end{itemize}
\end{proof}
As a consequence, for all admissible values of $\rho$, letting 
\begin{equation*}
\MMMt(\rho) := \MMM\left(\rho, \frac{\pi}{3}\right) = \frac{\sqrt{3}}{2} \xstar \sqrt{1 - \left(\kappa(\rho)\right)^2} - \left|\rho - \hal \xstar \right| \kappa(\rho), 
\end{equation*}
we obtain the following lower bound on $\Hm(\rho, \phi)$:
\begin{equation*}
\Hm(\rho, \phi) \geq \Im(\rho) := \Hm\left(\rho, \frac{\pi}{3}\right) = \MMMt(\rho)^2 \exp\left(\frac{\pi}{\alphamax(\rho)} \left( \xstar \rho - \rho^2  \right) \right).
\end{equation*}

\paragraph{Step 6. Worst case for $\rho$.}
\begin{claim}
For all admissible $\rho$, we have $\Im(\rho) \geq \Im\left( \frac{\xstar}{\sqrt{3}} \right)$.
\end{claim}
\begin{proof}
We consider three possibilities for $\rho$: between $0$ and $\rho_0$, between $\rho_0$ and $\hal \xstar$, between $\hal \xstar$ and $\frac{\xstar}{\sqrt{3}}$.
\begin{itemize}
\item If $0 \leq \rho \leq \rho_0 \leq \frac{\xstar}{2}$, we have:
\begin{equation*}
\MMMt(\rho)  = \left( \frac{\sqrt{3}}{2} \xstar \sqrt{1 - \left(\kappa(\rho)\right)^2} - \left(\hal \xstar - \rho\right) \kappa(\rho)  \right), \quad \exp\left(\frac{\pi}{\alphamax(\rho)} \left( \xstar \rho - \rho^2  \right) \right) = \exp\left(\lambdac \left( \xstar - \rho  \right) \right),
\end{equation*}
and thus the sign of $\Im'(\rho)$ is the same as the sign of $2 \MMMt'(\rho) - \lambdac \MMMt(\rho)$. We claim that $\Im'(\rho) \leq 0$. Indeed:
\begin{itemize}
  \item We know by Claim \ref{claim:studykappa} that $\rho \mapsto \kappa(\rho)$ is increasing and thus:
\begin{equation*}
\rho \mapsto \sqrt{1 - \left(\kappa(\rho)\right)^2} \text{ is decreasing,} \quad - \left(\hal \xstar - \rho\right) \kappa'(\rho) \text{ is negative,}
\end{equation*}
 in particular we see that $\MMMt'(\rho) \leq \kappa(\rho)$.
\item We have $\MMMt \geq  \frac{\sqrt{3}}{2} \xstar \sqrt{1 - \left(\kappa(\rho)\right)^2} - \hal \xstar \kappa(\rho)$
\end{itemize}
It remains to check that for all $0 \leq \kappa \leq \kappa\left(\frac{\xstar}{\sqrt{3}}\right)$ we have:
  \begin{equation*}
2 \kappa - \lambdac \left( \frac{\sqrt{3}}{2} \xstar \sqrt{1 - \kappa^2} - \hal \xstar \kappa \right) \leq 0,
  \end{equation*}
which boils down to a quadratic inequality in $\kappa$ that can be checked by hand.

\item If $\rho_0 \leq \rho \leq \frac{\xstar}{2}$, the expression of $\MMMt(\rho)$ does not change but $\exp\left(\frac{\pi}{\alphamax(\rho)} \left( \xstar \rho - \rho^2  \right) \right) = \exp\left(\frac{\pi}{\alphac} \left( \xstar \rho - \rho^2  \right) \right)$. Since $\rho \leq \hal \xstar$, this exponential term is increasing. We claim that $\MMMt(\rho)$ is also increasing, which will prove that $\Im(\rho)$ is increasing on this interval. We write:
\begin{equation*}
\MMMt'(\rho) =  \frac{\sqrt{3}}{2} \xstar \frac{-\kappa(\rho) \kappa'(\rho)}{\sqrt{1 - \left(\kappa(\rho)\right)^2}} + \kappa - \left(\hal \xstar - \rho \right) \kappa'(\rho) = \kappa(\rho) - \kappa'(\rho) \left(\hal \xstar - \rho + \frac{\sqrt{3} \xstar \kappa(\rho)}{2 \sqrt{1 - \left(\kappa(\rho)\right)^2}}  \right).
\end{equation*}
Since $\rho \geq \rho_0$, we have $\alphamax(\rho) = \alphac$ and $\kappa(\rho)$ is given, see \eqref{def:kappar}, by:
\begin{equation*}
\kappa(\rho) = \left(\sqrt{3.01} \xstar e^{-\frac{\pi}{2 \alphac} \xstar^2}\right) \times  \frac{1}{\rho} e^{\frac{\pi}{2 \alphac} \rho^2}, \quad \kappa'(\rho) = \frac{1}{\rho} \left( \frac{\pi}{\alphac} \rho^2 -1  \right) \kappa(\rho).
\end{equation*}
After factoring out $\kappa(\rho)$, which is positive, we are left to show that:
\begin{equation*}
1 - \frac{1}{\rho} \left( \frac{\pi}{\alphac} \rho^2 -1  \right) \left(\hal \xstar - \rho + \frac{\sqrt{3} \xstar \kappa(\rho)}{2 \sqrt{1 - \left(\kappa(\rho)\right)^2}}  \right) \geq 0.
\end{equation*}
We now use some simple bounds: $\frac{1}{\rho} \leq \frac{1}{\rho_0}$, $\frac{\pi}{\alphac} \rho^2 \leq \frac{\pi}{\alphac} \left( \frac{\xstar}{2} \right)^2$, $\hal \xstar - \rho \leq \hal \xstar - \rho_0$, $\frac{\kappa(\rho)}{\sqrt{1 - \left(\kappa(\rho)\right)^2}} \leq \frac{\kappa\left( \frac{\xstar}{2} \right)}{\sqrt{1 - \left(\kappa\left( \frac{\xstar}{2} \right)\right)^2}}$ (because $\kappa$ is increasing), and we obtain the purely numerical inequality:
\begin{equation*}
1 - \frac{1}{\rho_0} \left( \frac{\pi}{\alphac} \left( \frac{\xstar}{2} \right)^2  -1  \right) \left(\hal \xstar - \rho_0 + \frac{\sqrt{3} \xstar}{2} \frac{\kappa\left( \frac{\xstar}{2} \right)}{\sqrt{1 - \left(\kappa\left( \frac{\xstar}{2} \right)\right)^2}} \right) \geq 0,
\end{equation*}
which is true. 

\item If $\frac{\xstar}{2} \leq \rho \leq \frac{\xstar}{\sqrt{3}}$, we have:
\begin{equation*}
\MMMt(\rho)  = \left( \frac{\sqrt{3}}{2} \xstar \sqrt{1 - \left(\kappa(\rho)\right)^2} - \left(\rho- \hal \xstar\right) \kappa(\rho)  \right), \quad \exp\left(\frac{\pi}{\alphamax(\rho)} \left( \xstar \rho - \rho^2  \right) \right) = \exp\left(\frac{\pi}{\alphac} \left( \xstar \rho - \rho^2  \right) \right).
\end{equation*}
Since $\rho \geq \frac{\xstar}{2}$, we have $\left( \xstar \rho - \rho^2  \right)' \leq 0$, so the exponential term is decreasing. On the other hand $\MMMt$ is also decreasing, because $\kappa$ increases.
\end{itemize}
From this study, we get that $\Im$ is decreasing on $[0, \rho_0]$, then increasing on $[\rho_0, \hal \xstar]$, and then decreasing again on $[\hal \xstar, \frac{\xstar}{\sqrt{3}}]$. 
Numerically, one finds
\begin{equation*}
\Im(\rho_0) > \Im\left( \frac{\xstar}{\sqrt{3}} \right) \approx 3.581 \text{ versus } 3 \xstar^2 \approx 3.464.
\end{equation*}
\end{proof}

\subsubsection*{Step 7. Conclusion.}
Returning to \eqref{def:Fm}, we have found that for all admissible $u, \alpha, v$, we have:
\begin{equation*}
|(s_5+u) \cdot v|^2 e^{- \frac{2\pi}{\alpha} s_5 \cdot u - \frac{\pi}{\alpha} |u|^2} \geq 3 \xstar^2 + 0.1
\end{equation*}
In view of \eqref{Psiasu0debutcomplique}, \eqref{psiasu0uvsmallB}, we can then guarantee that:
\begin{equation*}
\Psisas(u) - \Psisas(0) \geq 0.1 e^{- \frac{\pi}{\alpha} \xstar^2}. 
\end{equation*}
Since $u \in \HH$ is bounded, we may write $|u|^2 \leq \frac{1}{c}0.1$ for $\cc$ small enough, and we obtain \eqref{Psisasu0}.
\end{proof}

\subsection{Higher shells}
We conclude this section with a quick study of higher shells. We claim that if $S$ is shell of radius $r \geq  \xss := \xstar \sqrt{3}$ (the second smallest non-zero distance between lattice points), we have:
\begin{equation}
\label{HighShellsBig}
\sum_{s \in S} |(s+u) \cdot v|^2 e^{- \frac{\pi}{\alpha} |s+u|^2} - 3 r^2 e^{- \frac{\pi}{\alpha} r^2} \geq 0.
\end{equation}
We also split the discussion into two cases, which are easier to treat than their “first shell” counterparts.
\begin{lemma}[$\frac{\pi}{\alpha}|u|$ small]
\label{lem:HigherShellsusmall}
Assume that $u, \alpha$ are such that:
\begin{equation}
\label{eq:uvsmallulargeHigh}
\alpha \leq \alphac, \quad \frac{\pi}{\alpha} |u| \leq \muh := 1.5.
\end{equation}
Then \eqref{HighShellsBig} holds.
\end{lemma}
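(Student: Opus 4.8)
The plan is to follow the ``$\frac{\pi}{\alpha}|u|$ small'' treatment of the first shell (Lemma~\ref{lem:pialphauverysmall}), exploiting that a shell of radius $r\ge\xss=\xstar\sqrt3$ lies at an exponentially larger distance from the origin than the first shell, which leaves a lot of slack.

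\emph{Reduction.} Fix a shell $S$ of radius $r\ge\xss$, write $\K:=\frac{\pi}{\alpha}$, and expand $|s+u|^2=r^2+2\,s\cdot u+|u|^2$ for $s\in S$, so that
\begin{equation*}
\sum_{s\in S}|(s+u)\cdot v|^2e^{-\K|s+u|^2}=e^{-\K r^2}e^{-\K|u|^2}\sum_{s\in S}\bigl((s+u)\cdot v\bigr)^2e^{-2\K\,s\cdot u}.
\end{equation*}
Hence \eqref{HighShellsBig} is equivalent to $\sum_{s\in S}\bigl((s+u)\cdot v\bigr)^2e^{-2\K\,s\cdot u}\ge 3r^2e^{\K|u|^2}$. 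Two quantitative inputs will be used throughout: since $\alpha\le\alphac$ we have $\K\ge\K_0:=\frac{\pi}{\alphac}$, so $\K r^2\ge\K_0\,\xss^2=\frac{3\pi}{\alphac}\xstar^2>19$; and since $\frac{\pi}{\alpha}|u|\le\muh$ we have $t:=\K|u|^2\le\muh|u|\le\muh^2/\K_0<\tfrac25$.

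\emph{Taylor expansion.} Next I would use the elementary bound $e^{-x}\ge 1-x+\tfrac{x^2}{2}-\tfrac{x^3}{6}$, valid for \emph{all} real $x$ (the difference has nonnegative fourth derivative and vanishes to third order at $0$). Applying it with $x=2\K\,s\cdot u$, multiplying by $\bigl((s+u)\cdot v\bigr)^2\ge 0$, summing over $S$, expanding $\bigl((s+u)\cdot v\bigr)^2$, and invoking the regular-hexagon $4$-design identities of Remark~\ref{rem:5design} (the same identities used in Lemma~\ref{lem:pialphauverysmall}, with $\xstar$ replaced by the radius $r$ of $S$: they depend only on $r$ through the circle-average description of a $4$-design, and the odd-degree sums vanish by $s\mapsto-s$), one obtains the analogue of \eqref{FDLT2} with $\xstar\to r$:
\begin{equation*}
\sum_{s\in S}\bigl((s+u)\cdot v\bigr)^2e^{-2\K\,s\cdot u}\ \ge\ 3r^2+\tfrac32\K^2r^4|u|^2+(u\cdot v)^2\,\Gamma,\qquad \Gamma:=6+3\K^2r^4\bigl(1-2\K|u|^2\bigr)+6\K r^2\bigl(\K|u|^2-2\bigr).
\end{equation*}

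\emph{Conclusion, two cases.} Split on the sign of $\Gamma$. If $\Gamma\ge0$, the right-hand side is $\ge 3r^2\bigl(1+\tfrac12\K r^2\,t\bigr)$, and since $\tfrac12\K r^2\ge 9$ while $e^t\le 1+\tfrac53 t$ on $[0,\tfrac25]$, this is $\ge 3r^2e^{\K|u|^2}$. If $\Gamma<0$, then $(u\cdot v)^2\le|u|^2$ gives $(u\cdot v)^2\Gamma\ge|u|^2\Gamma$, so the right-hand side is $\ge 3r^2+|u|^2\bigl(\tfrac32\K^2r^4+\Gamma\bigr)$; a direct computation gives $\tfrac32\K^2r^4+\Gamma=\K^2r^4\bigl(\tfrac92-6t\bigr)+6\K^2r^2|u|^2-12\K r^2+6\ge\K^2r^4$ (using $t<\tfrac25$ and $\K r^2>19$), whence $3r^2+|u|^2\K^2r^4=3r^2\bigl(1+\tfrac13 t\,\K r^2\bigr)\ge 3r^2e^{\K|u|^2}$ because $t\,\K r^2\ge 3(e^t-1)$ on $[0,\tfrac25]$. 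In both cases \eqref{HighShellsBig} follows, and since $u\in\HH$ is bounded one can absorb the passage from \eqref{HighShellsBig} to a quadratic lower bound into the constant $\cc$ exactly as at the end of Lemma~\ref{lem:uvsmallularge}.

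The only place requiring care is the case $\Gamma<0$: the cubic Taylor remainder is \emph{a priori} lossy when $r$ is large (the argument $2\K\,s\cdot u$ may be of order $\K r|u|$), but the hypothesis $\frac{\pi}{\alpha}|u|\le\muh$ forces $\K|u|^2<\tfrac25$, which keeps the dangerous term $-6\K^3r^4|u|^2(u\cdot v)^2$ dominated by $\tfrac32\K^2r^4|u|^2$; together with the slack from $r^2\ge 3\xstar^2$ (so that $\K r^2>19$), all the resulting one-variable inequalities hold comfortably with the numerical choice $\muh=1.5$.
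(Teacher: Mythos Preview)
Your proof is correct and follows essentially the same approach as the paper: the same reduction, the same third-order Taylor bound $e^{-x}\ge\sum_{k=0}^{3}(-x)^k/k!$, and the same $4$-design identities for the hexagon, leading to the identical lower bound $3r^2+\tfrac32\K^2r^4|u|^2+(u\cdot v)^2\Gamma$ with the same $\Gamma$. The only difference is cosmetic: the paper shows directly that $\Gamma\ge0$ whenever $r\ge\xss$ and $\K\ge\K_0$ (by noting $\Gamma$ is decreasing in $\mu^2$, then minimizing in $\K$ and $r$), so your $\Gamma<0$ branch is in fact vacuous; your handling of that branch is nonetheless valid, and your numerical shortcuts ($\K r^2>19$, $t<\tfrac25$, $e^t\le1+\tfrac53t$) are all correct.
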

\begin{proof}
We follow the same strategy of proof as for Lemma \ref{lem:pialphauverysmall}: perform a Taylor expansion to third order and use the symmetries of the hexagon. Replacing $\xstar$ by $r$ in \eqref{expandsquareSDL}, \eqref{ExpandSUM}, \eqref{FDLT2}, we obtain that \eqref{HighShellsBig} holds if:
\begin{equation*}
3r^2 + \frac{3}{2} \K^2 r^4 |u|^2 + |u \cdot v|^2 \left(6 + 3 \K^2 r^4 \left(1 - 2 \K |u|^2 \right) + 6 \K r^2 \left( \K |u|^2 - 2 \right) \right) \geq 3 r^2 e^{\K |u|^2}.
\end{equation*}
We still have the constraints $\K \geq \K_0 := \frac{\pi}{\alphac}$, and $\mu := \K |u| \leq \muh$. The good news is that the quantity within the parenthesis is now non-negative: write it as
\begin{equation*}
6 + 3 \K^2 r^4 - 6 \K r^4 \mu^2 + 6 \mu^2 r^2 - 12 \K r^2 \geq  6 + 3 \K^2 r^4 - 6 \K r^4 \muh^2 + 6 \muh^2 r^2 - 12 \K r^2
\end{equation*}
the critical point of this convex quantity $\K = \muh^2 + \frac{2}{r^2} \leq \K_0$, so the minimal value is at $\K_0$, furthermore:
\begin{equation*}
\frac{\dd}{\dd r} \left(6 + 3 \K_0^2 r^4 - 6 \K_0 r^4 \muh^2 + 6 \muh^2 r^2 - 12 \K_0 r^2\right) \geq 0 \iff r^2 \geq \frac{2\K_0 - \muh^2}{\K_0^2 - 2\K_0 \muh^2},
\end{equation*} 
which is true for $r \geq \xss := \sqrt{3}\xstar$, and we can finally check a purely numerical inequality:
\begin{equation*}
6 + 3 \K_0^2 r^4 - 6 \K_0 r^4 \muh^2 + 6 \muh^2 r^2 - 12 \K_0 r^2 \geq 6 + 3 \K_0^2 \xss^4 - 6 \K_0 \xss^4 \muh^2 + 6 \muh^2 \xss^2 - 12 \K_0 \xss^2 \geq 0.
\end{equation*}

We are thus left to prove that, for all admissible $\K, u$,
\begin{equation*}
1+ \frac{1}{2} \K^2 r^2 |u|^2  \geq e^{\K |u|^2}, \text{ i.e. } 1+ \frac{1}{2} \mu^2 r^2 \geq e^{\frac{\mu^2}{\K}} 
\end{equation*}
which again boils down to an explicit computation: the worst case for $\K$ is at $\K = \K_0$, and the quantity $1+ \frac{1}{2} \mu^2 r^2 - e^{\frac{\mu^2}{\K_0}}$ is concave in $\mu^2$, vanishes at $\mu^2 = 0$, so it remains to check that:
\begin{equation*}
1+ \frac{1}{2} \muh^2 r^2 - e^{\frac{\muh^2}{\K_0}} \geq 0, 
\end{equation*}
which is true with our choices $\alphac = 0.552$ (and $\K_0 = \frac{\pi}{\alphac}$), $\muh = 1.5$.
\end{proof}

\begin{lemma}[$\frac{\pi}{\alpha}|u|$ large]
\label{lem:HigherShellsularge}
Assume that $u, \alpha$ are such that:
\begin{equation*}
\frac{\pi}{\alpha} |u| \geq \muh := 1.5
\end{equation*}
Then \eqref{HighShellsBig} holds.
\end{lemma}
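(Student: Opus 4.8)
The plan is to adapt the ``good vertex'' strategy of Lemma~\ref{lem:uvsmallularge}. Since $\tfrac{\pi}{\alpha}|u|\geq\muh$ is large, the Gaussian weights $w_s:=e^{-\frac{\pi}{\alpha}|s+u|^2}$ of the vertices $s$ of $S$ lying \emph{opposite} to $u$ (those with $s\cdot u<0$, for which $|s+u|^2<r^2$) are exponentially larger than $e^{-\frac{\pi}{\alpha}r^2}$, whereas the other vertices contribute a sum of nonnegative terms to the left-hand side of \eqref{HighShellsBig} and may simply be dropped. By the rotational symmetry of the regular hexagon $S$ I may assume $u=\rho(\cos\theta,\sin\theta)$ with $\theta\in[0,\tfrac{\pi}{6}]$, $0<\rho=|u|\leq\tfrac{\xstar}{\sqrt3}$ (because $u\in\HH$) and $r\geq\xss=\sqrt3\xstar$; in particular $\rho\leq\tfrac r3$ and $\tfrac{\pi}{\alpha}\rho\geq\muh$.

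I would keep the vertex $s_4$ most anti-aligned with $u$ (angle $\pi$, so $s_4\cdot u=-r\rho\cos\theta\leq-\tfrac{\sqrt3}{2}r\rho$) and an adjacent vertex $s_5$ with $s_5\cdot u\leq-\tfrac12 r\rho$, and lower-bound their joint contribution by the minimal eigenvalue of $M:=w_4\,aa^{T}+w_5\,bb^{T}$, where $a=s_4+u$, $b=s_5+u$ and $w_i=e^{-\frac{\pi}{\alpha}|s_i+u|^2}$, using $v^{T}Mv\geq\lambda_{\min}(M)=\tfrac12\big((w_4|a|^2+w_5|b|^2)-\sqrt{(w_4|a|^2+w_5|b|^2)^2-4w_4w_5\det(a,b)^2}\big)$. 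The decisive geometric input is that $a$ and $b$ are uniformly non-collinear, $|\det(s_4+u,s_5+u)|\geq\tfrac{\sqrt3}{2}r^2-r\rho>0$ (since $|u|\leq\tfrac r3$), so $M$ is coercive; quantitatively, combining this with $|s_4+u|^2\leq r^2-\sqrt3 r\rho+\rho^2$, $|s_5+u|^2\leq r^2-r\rho+\rho^2$, the weight bounds $w_4\geq e^{-\frac{\pi}{\alpha}r^2}e^{\frac{\pi}{\alpha}\rho(\sqrt3 r-\rho)}$, $w_5\geq e^{-\frac{\pi}{\alpha}r^2}e^{\frac{\pi}{\alpha}\rho(r-\rho)}$, and the exact ratio $w_5/w_4=e^{-\frac{\pi}{\alpha}(|s_5+u|^2-|s_4+u|^2)}$ (equal to $1$ at $\theta=\tfrac{\pi}{6}$ and $\leq e^{-\frac{\pi}{\alpha}r\rho}$ at $\theta=0$), the target $\lambda_{\min}(M)\geq 3r^2 e^{-\frac{\pi}{\alpha}r^2}$ becomes an explicit inequality in $(r,\rho,\tfrac{\pi}{\alpha})$; monotonicity in $r$ of the relevant exponents localizes the worst case to $r=\sqrt3\xstar$, $\rho=\xstar/\sqrt3$, $\tfrac{\pi}{\alpha}\rho=\muh$, leaving purely numerical inequalities to be checked by computer, as elsewhere in this appendix. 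When $\theta$ is near $0$ one may also keep the third opposite vertex $s_3$, then of comparably large weight, for extra comfort.

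The hard part is the adversarial unit vector $v$: no single vertex suffices, since $v$ can be chosen orthogonal to $s_4+u$ and annihilate the dominant term — this is precisely what the (here absent) smallness constraint on $|u\cdot v|$ ruled out in the first-shell argument of Lemma~\ref{lem:uvsmallularge}. The two-vertex eigenvalue bound cures this, but the constants are genuinely tight, so one must use sharp rather than crude estimates throughout: the exact eigenvalue formula rather than the shortcut $\lambda_{\min}(M)\geq\det(M)/\mathrm{tr}(M)$ (which loses a factor up to $2$ when $w_4|a|^2\approx w_5|b|^2$, i.e.\ near $\theta=\tfrac{\pi}{6}$), the precise value of $w_5/w_4$ rather than $w_5/w_4\leq1$ (which loses a comparable factor near $\theta=0$, where $w_5/w_4$ is exponentially small), and the sharp bounds $|s_i+u|^2\leq r^2+2(s_i\cdot u)+\rho^2$ rather than $|s_i+u|\leq r+\rho$. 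It is only because the exponential gain here is governed by $r\geq\sqrt3\xstar$ instead of by $\xstar$ that these numerical inequalities hold at all.
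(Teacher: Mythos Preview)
Your strategy is essentially the paper's: keep the two ``opposite'' vertices $s_4,s_5$ and exploit that $s_4+u$ and $s_5+u$ are uniformly non-collinear, so that no choice of $v$ can kill both prefactors at once. The difference is in how the minimization over $v$ is handled. You keep the two distinct weights $w_4,w_5$ and bound $\lambda_{\min}(w_4\,aa^T+w_5\,bb^T)$ via the exact quadratic eigenvalue formula, which leaves you with a function of $(r,\rho,\theta,\tfrac{\pi}{\alpha})$ that still needs a nontrivial worst-case analysis. The paper instead observes that for $\theta\in[0,\tfrac{\pi}{6}]$ both $s_4\cdot u$ and $s_5\cdot u$ are $\le -\tfrac12 r|u|$, so one may replace \emph{both} weights by the common lower bound $e^{-\frac{\pi}{\alpha}(r^2-r|u|+|u|^2)}$ and reduce to bounding $\min_{|v|=1}\bigl(|(s_4+u)\cdot v|^2+|(s_5+u)\cdot v|^2\bigr)$, which is computed by hand to be at least $r^2\bigl(\tfrac12-3|u|^2/r^2\bigr)$. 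This collapses the problem to a single explicit function of $(\alpha,\rho)$ (no $\theta$, no weight ratio), checked by log-concavity in $\rho$ at the two endpoints. So the paper's route is strictly simpler: the common-weight trick throws away information you are carefully preserving, but that information is not needed --- the cruder bound already clears the numerical bar, precisely because (as you correctly note at the end) the exponential gain is governed by $r\ge\sqrt3\,\xstar$ rather than $\xstar$. Your claim that the worst case localizes at a single triple $(r,\rho,\tfrac{\pi}{\alpha})$ is slightly optimistic: in the paper's reduction the minimum in $\rho$ can sit at either endpoint, and both are checked.
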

\begin{proof}
We follow a similar strategy as for the proof of Lemma \ref{lem:uvsmallularge}. Returning to the situation shown in Figure \ref{figures4}, we now focus on the contributions of $s_5$ \emph{and} $s_4$ together. The first key observation is that, taking $\theta \in \left[0, \frac{\pi}{6}\right]$ as in Figure \ref{figures4}, we have both $\pi - \theta$ and $\pi + \frac{\pi}{3} - \theta$ in $\left[\pi - \frac{\pi}{3}, \pi + \frac{\pi}{3}\right]$, hence:
\begin{equation*}
s_4 \cdot u \leq - \hal r |u|, \quad s_5 \cdot u \leq - \hal r |u|.
\end{equation*}
The contributions from $s_4$ and $s_5$ are thus bounded below by:
\begin{equation*}
|(s_5+u) \cdot v|^2 e^{- \frac{\pi}{\alpha} |s_5+u|^2} + |(s_4+u) \cdot v|^2 e^{- \frac{\pi}{\alpha} |s_4+u|^2} \geq \left(|(s_5+u) \cdot v|^2  + |(s_4+u) \cdot v|^2 \right) e^{- \frac{\pi}{\alpha} r^2 - \frac{\pi}{\alpha} |u|^2 + \frac{\pi}{\alpha} r |u|},
\end{equation*}
and to get \eqref{HighShellsBig} it suffices to prove:
\begin{equation}
\label{eq:toprove_finally}
\left(|(s_5+u) \cdot v|^2  + |(s_4+u) \cdot v|^2 \right) e^{\frac{\pi}{\alpha} \left(r |u| - |u|^2\right)} \geq 3r^2.
\end{equation}
\begin{claim}
We have
\begin{equation*}
|(s_5+u) \cdot v|^2  + |(s_4+u) \cdot v|^2 \geq r^2 \left( \hal - \frac{3|u|^2}{r^2} \right)
\end{equation*}
\end{claim}
\begin{proof}
Write $v = (\cos \phi, \sin \phi)$. Expanding the squares, using that $(s_5 + s_4) \cdot v = \sqrt{3} r \cos\left(\pi + \frac{\pi}{6} - \phi \right)$ (see Figure \ref{figures4}), and that $(u \cdot v) \geq - |u|$, we get:
\begin{multline*}
|(s_5+u) \cdot v|^2  + |(s_4+u) \cdot v|^2 = r^2 \left(\cos^2(\pi - \phi) + \cos^2(\pi + \frac{\pi}{3} - \phi) \right) + 2 \left((s_5 + s_4) \cdot v \right) (u \cdot v) + 2 |u \cdot v|^2 \\ 
\geq r^2 \left( \cos^2(\phi) + \cos^2\left(\phi - \frac{\pi}{3}\right) \right) - 2 \sqrt{3} |u| r \left|\cos\left(\phi - \frac{\pi}{6} \right)\right|  = r^2 \left( \hal + \cos^2\left(\phi - \frac{\pi}{6}\right) \right) - 2 \sqrt{3} |u|r \left|\cos\left(\phi - \frac{\pi}{6} \right)\right|.
\end{multline*}
The minimum of $c \mapsto r^2 \left( \hal + c^2 \right) - 2 \sqrt{3} |u| r c$ for $c \in [0,1]$ is $\frac{r^2}{2} - 3 |u|^2$.
 \end{proof}
In order to get \eqref{eq:toprove_finally}, it remains to show that:
\begin{equation*}
\left( \hal - \frac{3|u|^2}{r^2} \right) e^{\frac{\pi}{\alpha} \left(r |u| - |u|^2\right)} \geq 3.
\end{equation*}
The worst possibility for $r$ is the smallest one, namely $r = \xss := \sqrt{3}\xstar$. Let  \renewcommand{\FF}{\mathrm{F}}
\begin{equation*}
\FF(\alpha, \rho) := \left( \hal - \frac{\rho^2}{\xstar^2} \right) e^{\frac{\pi}{\alpha} \left(\sqrt{3} \xstar \rho - \rho^2\right)}.
\end{equation*}
A direct computation shows that for fixed $\alpha$, $\log \FF(\alpha, \cdot)$ is strictly concave in $\rho$ and thus attains its minimum at the endpoints of the admissible interval defined by the conditions $\frac{\pi}{\alpha} \rho \geq \muh$ and $\rho \leq \frac{\xstar}{\sqrt{3}}$, i.e. at $\rho = \frac{\muh \alpha}{\pi}$ or at $\rho = \frac{\xstar}{\sqrt{3}}$. With our choices of $\alphac = 0.552, \muh = 1.5$, we have on the one hand:
\begin{equation*}
\FF\left(\alpha, \frac{\xstar}{\sqrt{3}}\right) \geq \FF\left(\alphac, \frac{\xstar}{\sqrt{3}}\right) = \frac{1}{6} e^{\frac{\pi}{\alphac} \frac{4}{3 \sqrt{3}}} > 3,
\end{equation*}
and on the other hand:
\begin{equation*}
\FF\left(\alpha, \frac{\muh \alpha}{\pi}\right) = \left( \hal - \frac{\muh^2 \alpha^2}{\pi^2 \xstar^2} \right) e^{\sqrt{3} \xstar \muh - \frac{\muh^2 \alpha}{\pi}}  \geq \left( \hal - \frac{\muh^2 \alphac^2}{\pi^2 \xstar^2} \right) e^{\sqrt{3} \xstar \muh - \frac{\muh^2 \alphac}{\pi}} > 3.
\end{equation*}
\end{proof}

\begin{longtable}{ll}
\caption{Table of Notations and Constants}\\
\hline
$\AD$ & The hexagonal lattice (of density 1). \\
$\HH$ & Fundamental domain (Voronoi cell) of $\AD$. \\
$\ADD$ & The reciprocal lattice of $\AD$. \\
$\Omega$ & Fundamental domain of $\ADD$ (Pontryagin dual). \\
$\Ss$ & The first shell (6 nearest neighbors) of $\AD$. \\
$\xstar$ & Minimal distance of $\AD$, $\sqrt{2/\sqrt{3}}$. \\
$\xss$ & The second smallest non-zero distance, $\xstar \sqrt{3}$.\\
c.m.s.d. & Completely monotonic function of square distance. \\
$\Pa$ & Gaussian potential, $r \mapsto e^{-\pi \alpha r^2}$. \\
$\Ga(x)$ & Gaussian function on $\R^2$, $\Ga(x) = \Pa(|x|)$. \\
$\muf$ & Bernstein measure for a c.m.s.d. function $f$. \\
$\Ef(\bX)$ & The $f$-energy per point of $\bX$. \\
$\Ea(\bX)$ & The $\Pa$-energy of $\bX$.\\
$\pP$ & A perturbation map $\pP: \AD \to \R^2$. \\
$\AD + \pP$ & The perturbed lattice configuration. \\
$\|\pP\|$ & Size of the perturbation, $\sup_x |\pP(x)|$. \\
$\Px$ & Probability measure of relative displacements. \\
$\CC_x$ & Covariance matrix of relative displacements. \\
$\RR_x$ & Autocorrelation matrix of the perturbation field. \\
$\FS(\PP)$ & First-Shell size of $\PP$, $\sum_{x \in \Ss} \int |u|^2 d\Px$. \\
$\Cs$ & The $\STP$-valued spectral measure of $\pP$. \\
$\tCs$ & The trace measure $\Cs^{11} + \Cs^{22}$. \\
$\Cst$ & The trace derivative, $d\Cs / d\tCs$. \\
$\SM(\PP)$ & Spectral Measure size of $\PP$, $\Tr(\int |\omega|^2 d\Cs)$. \\
$\Psiav(u)$ & Auxiliary lattice sum $\sum |(x+u) \cdot v|^2 e^{-\frac{\pi}{\alpha}|x+u|^2}$. \\
$\Psisas(u)$ & Contribution to $\Psiav(u)$ from $\Ss \cup \{0\}$. \\
$\alphaL$ & Threshold $\alpha$ separating large/small regimes. \\
$\alphac$ & Numerical constant, $0.552$. \\
$\mustar$ & Numerical constant, $2.73$. \\
$\muh$ & Numerical constant, $1.5$.\\
\end{longtable}

\clearpage
\bibliographystyle{alpha}
\bibliography{LocalOpt_abbrev_cleaned}

\newcommand{\etalchar}[1]{$^{#1}$}
\begin{thebibliography}{CKM{\etalchar{+}}22}

\bibitem[BF23]{betermin2023maximal}
L.~B{\'e}termin and M.~Faulhuber.
\newblock Maximal theta functions universal optimality of the hexagonal lattice
  for {M}adelung-like lattice energies.
\newblock {\em J. Anal. Math.}, 149(1):307--341, 2023.

\bibitem[BL15]{blanc2015crystallization}
X.~Blanc and M.~Lewin.
\newblock The crystallization conjecture: a review.
\newblock {\em EMS Surv. Math. Sci.}, 2(2):255--306, 2015.

\bibitem[CK07]{cohn2007universally}
H.~Cohn and A.~Kumar.
\newblock Universally optimal distribution of points on spheres.
\newblock {\em J. Amer. Math. Soc.}, 20(1):99--148, 2007.

\bibitem[CKM{\etalchar{+}}22]{cohn2022universal}
H.~Cohn, A.~Kumar, S.~Miller, D.~Radchenko, and M.~Viazovska.
\newblock Universal optimality of the {$E_8$ and Leech} lattices and
  interpolation formulas.
\newblock {\em Ann. Math.}, 196(3):983--1082, 2022.

\bibitem[CS12]{coulangeon2012energy}
R.~Coulangeon and A.~Sch{\"u}rmann.
\newblock Energy minimization, periodic sets and spherical designs.
\newblock {\em Int. Math. Res. Not.}, 2012(4):829--848, 2012.

\bibitem[CS22]{coulangeon2022erratum}
R.~Coulangeon and A.~Sch{\"u}rmann.
\newblock Erratum: Energy minimization, periodic sets, and spherical designs.
\newblock {\em Int. Math. Res. Not.}, 2022(4):3198--3200, 2022.

\bibitem[Fal69]{falb1969theorem}
P.~Falb.
\newblock On a theorem of {B}ochner.
\newblock {\em Publ. Math. IH{\'E}S}, 36:59--67, 1969.

\bibitem[FSZ24]{faulhuber2024note}
M.~Faulhuber, I.~Shafkulovska, and I.~Zlotnikov.
\newblock A note on energy minimization in dimension~$2$.
\newblock {\em Proc. Am. Math. Soc, Series B}, 11(57):664--679, 2024.

\bibitem[FV17]{friedli2017statistical}
S.~Friedli and Y.~Velenik.
\newblock {\em Statistical mechanics of lattice systems: a concrete
  mathematical introduction}.
\newblock Cambridge University Press, 2017.

\bibitem[Gru00]{gruber2000many}
P.~Gruber.
\newblock In many cases optimal configurations are almost regular hexagonal.
\newblock {\em Rend. Circ. Mat. Palermo (2)}, 65:121--145, 2000.

\bibitem[HT25]{hardin2023universally}
D.~Hardin and N.~Tenpas.
\newblock Universally optimal periodic configurations in the plane.
\newblock {\em Discrete Analysis}, 22, 2025.

\bibitem[Kim11]{kimsey2011matrix}
D.~P. Kimsey.
\newblock {\em Matrix-valued moment problems}.
\newblock PhD thesis, Drexel University, 2011.

\bibitem[Mon88]{montgomery1988minimal}
H.~L. Montgomery.
\newblock Minimal theta functions.
\newblock {\em Glasg. Math. J.}, 30(1):75--85, 1988.

\bibitem[Ros64]{rosenberg1964square}
Milton Rosenberg.
\newblock The square-integrability of matrix-valued functions with respect to a
  non-negative {Hermitian} measure.
\newblock {\em Duke Math. J.}, 31(1):291--298, 1964.

\bibitem[SS12]{sandier2012ginzburg}
E.~Sandier and S.~Serfaty.
\newblock From the {Ginzburg-Landau} model to vortex lattice problems.
\newblock {\em Commun. Math. Phys.}, 313:635--743, 2012.

\bibitem[Sti76]{stillinger1976phase}
F.~H. Stillinger.
\newblock Phase transitions in the {G}aussian core system.
\newblock {\em The Journal of Chemical Physics}, 65(10):3968--3974, 1976.

\bibitem[Tao07]{Tao}
T.~Tao.
\newblock Fourier analysis on abelian groups.
\newblock https://www.math.ucla.edu/~tao/247b.1.07w/notes9.pdf, 03 2007.

\bibitem[Wil72]{wilcox1972measurable}
C.~H. Wilcox.
\newblock Measurable eigenvectors for {H}ermitian matrix-valued polynomials.
\newblock {\em Journal of Mathematical Analysis and Applications},
  40(1):12--19, 1972.

\end{thebibliography}
\end{document}